\DeclareSymbolFontAlphabet{\mathbb}{AMSb}
\DeclareSymbolFontAlphabet{\mathbbl}{bbold}
\theoremstyle{definition}
\newtheorem* {theorem*}{Theorem}
\theoremstyle{definition}
\newtheorem* {corollary*}{Corollary}
\newtheorem* {conjecture*}{Conjecture}
\newtheorem{theorem}{Theorem}[section]
\theoremstyle{definition}
\newtheorem* {example*}{Example}
\newtheorem* {fact}{Fact}
\newtheorem{lemma}[theorem]{Lemma}
\theoremstyle{definition}
\newtheorem{definition}[theorem]{Definition}
\theoremstyle{definition}
\newtheorem* {notation}{Notation}
\newtheorem{conjecture}[theorem]{Conjecture}
\newtheorem{proposition}[theorem]{Proposition}
\newtheorem{corollary}[theorem]{Corollary}
\newtheorem *{remark}{Remark}
\theoremstyle{definition}
\newtheorem {example}[theorem]{Example}
\theoremstyle{definition}
\theoremstyle{definition}
\theoremstyle{definition}
\def\({\left(}
\def\){\right)}
\newcommand{\CC}{\mathbb{C}}
\newcommand{\QQ}{\mathbb{Q}}
\newcommand{\cQ}{\mathcal{Q}}
\newcommand{\cR}{\mathcal{R}}
\newcommand{\cS}{\mathcal{S}}
\newcommand{\cC}{\mathcal{C}}
\newcommand{\cD}{\mathcal{D}}
\def\cS{\mathcal{S}}
\def\NN{\mathbb{N}}
\def\CC{\mathbb{C}}
\def\ZZ{\mathbb{Z}}
\def\spanning{\textnormal{-span}}
\newcommand{\cL}{\mathcal{L}}
\def\barr{\begin{array}}
\def\earr{\end{array}}
\def\ba{\begin{aligned}}
\def\ea{\end{aligned}}
\def\be{\begin{equation}}
\def\ee{\end{equation}}
\def\qquand{\qquad\text{and}\qquad}
\def\quand{\quad\text{and}\quad}
\def\inv{\mathrm{inv}}
\def\I{\mathcal{I}}
\def\omdef{\overset{\mathrm{def}}}
\def\hs{\hspace{0.5mm}}
\def\ben{\begin{enumerate}}
\def\een{\end{enumerate}}
\def\hs{\hspace{0.5mm}}
\def\ellhat{\hat\ell}
\def\a{\textbf{a}}
\newcommand{\cA}{\mathcal{A}}
\renewcommand{\bar}{\overline}
\def\iR{\hat\cR}
\def\iF{\hat F}
\def\F{\mathscr{F}}
\def\arcstart{\ \xy<0cm,-.15cm>\xymatrix@R=.1cm@C=.3cm }
\newcommand{\arcstartc}[1]{\ \xy<0cm,-.15cm>\xymatrix@R=.1cm@C=#1cm}
\def\ellhat{\hat\ell}
\def\cT{\mathcal{T}}
\def\cG{\mathcal{G}}
\def\F{\mathscr{F}}
\def\G{\mathscr{G}}
\def\wB{\bar1\hs\bar  2\hs\bar 3\cdots  \bar n}
\def\Pair{\mathrm{Pair}}
\def\NCSM{\textsf{NCSP}}
\def\NCSQ{\textsf{NCSQ}}
\def\ONCSM{\NCSQ^-}
\def\ENCSM{\NCSQ^+}
\def\fl{\mathrm{fl}}
\def\NDes{\operatorname{NDes}}
\def\NNeg{\operatorname{NNeg}}
\DeclareRobustCommand*{\ora}{\overrightarrow}
\def\QQQ{\ora{\cL_n}}
\newcommand{\QQq}[1]{\ora{\cL}_{#1}}
\def\GGG{\ora{\cG_n}}
\def\sh{M}
\def\shq{\sh'}
\def\alphaq{\alpha'}
\def\Sink{\textsf{Sink}}
\def\Source{\textsf{Source}}
\def\F{F}
\def\G{G}
\def\iF{\hat F}
\def\iG{\hat G}
\def\flt{\underline}
\newcommand{\Red}{\mathcal{R}}
\DeclareMathOperator{\SYT}{SYT}
\newcommand{\stair}{\delta}
\DeclareMathOperator{\sdeg}{sdeg}
\numberwithin{equation}{section} 
\renewcommand{\@makefnmark}{\mbox{\textsuperscript{}}}
\begin{document}

\title{Stanley symmetric functions for signed involutions}

\author{
Eric Marberg
\\ Department of Mathematics \\  HKUST \\ {\tt eric.marberg@gmail.com}
\and
Brendan Pawlowski \\ Department of Mathematics  \\ University of Southern California \\ {\tt br.pawlowski@gmail.com}
}

\date{}

\maketitle

\begin{abstract}
An involution in a Coxeter group has an associated set of \emph{involution words}, a variation on reduced words. These words are saturated chains in a partial order first considered by Richardson and Springer in their study of symmetric varieties. In the symmetric group, involution words can be enumerated in terms of tableaux using appropriate analogues of the symmetric functions introduced by Stanley to accomplish the same task for reduced words. We adapt this approach to the group of signed permutations.  We show that involution words for the longest element in the Coxeter group $C_n$ are in bijection with reduced words for the longest element in $A_n = S_{n+1}$, which are known to be in bijection with standard tableaux of shape $(n, n-1, \ldots, 2, 1)$.
\end{abstract}

\setcounter{tocdepth}{2}
\tableofcontents

\section{Introduction}


Let $W$ be a Coxeter group with simple generating set $S$.
A \emph{reduced word} for $w \in W$ is a minimal-length sequence $(r_1, r_2, \dots, r_{\ell})$ of simple generators $r_i \in S$ with $w = r_1r_2 \cdots r_{\ell}$.
Let $\Red(w)$ be the set of reduced words for $w$.

Of primary interest are the finite Coxeter groups of classical types A and C, given as follows.
Fix a positive integer $n$ and let $[n] = \{1,2,\dots,n\}$ and $[\pm n] = \{ \pm 1, \pm2 ,\dots, \pm n\}$.
Let $A_n = S_{n+1}$ be the group of permutations of $[n+ 1]$.
Let $C_n$ be the group of permutations $w$ of $[\pm n]$ with $w(-i) = -w(i)$ for all $i$.
Define $s_1,s_2,\dots,s_n \in A_n$ and $t_0,t_1,\dots,t_{n-1} \in C_n$ by
\be\label{st-eq}
 s_i = (i,i+1), \qquad t_0=(-1,1),  \qquand t_i = (-i-1,-i)(i,i+1)  \text{ for }i \neq 0.
\ee
Then $A_n$ is a Coxeter group 
relative to the generating set
$S = \{s_1,s_2,\dots,s_n\}$
while $C_n$ is a Coxeter group 
relative to the generating set 
$S = \{t_0,t_1,\dots,t_{n-1}\}$.
We refer to elements of $C_n$ as \emph{signed permutations}.

Each finite Coxeter group contains a unique element of maximal length,
where the \emph{length} of an element $w$ refers to the common length of any word in $\cR(w)$.
Let $w_n^A$ and $w_n^C$ denote the longest elements of $A_n$ and $C_n$.
Then $w_n^A$ is the permutation given in one-line notation by $(n+1)n\cdots 321$
while
$w_n^C$ is the signed permutation given by the negation map $i \mapsto -i$.
There are attractive product formulas for the number of reduced words for both of these permutations:
\begin{equation}
\label{eq:product-formulas}
\left|\Red\(w_n^A\)\right| = \frac{{n+1 \choose 2}!}{\prod_{i=1}^n (2i-1)^{n-i+1}} \qquand    \left|\Red\(w_n^C\)\right| = \frac{(n^2)!}{n^n \prod_{i=1}^{n-1} [i(2n-i)]^i}.
\end{equation}
Stanley proved the first of these identities \cite[Corollary 4.3]{Stanley} and conjectured the second, which was later shown by Haiman \cite[Theorem 5.12]{HaimanReduced}.

Let $\SYT(\lambda)$ be the set of \emph{standard Young tableaux} of shape $\lambda$.
Define \[\delta_n=(n,n-1,\dots,2,1)\] and write $(n^n)$ for the partition with $n$ parts of size $n$.
The identities \eqref{eq:product-formulas} are equivalent to
$
\left|\Red\(w_n^A\)\right| = |\SYT(\delta_n)| $ and $ \left|\Red\(w_n^C\)\right| = |\SYT((n^n))|
$
via  the \emph{hook-length formula} \cite[Corollary 7.21.6]{EC2}.
As one would expect from this formulation, there are natural bijective proofs 
of the identities \eqref{eq:product-formulas}, due to Edelman and Greene \cite{EG} in type A
and to Haiman \cite{HaimanReduced} and Kra\'skiewicz \cite{kraskiewicz-insertion} in type C.
 
The main result of this paper is a product formula similar to \eqref{eq:product-formulas} for the cardinality of a set of reduced-word-like objects
associated to $w_n^C$.
%
%
Write $\ell : W \to \NN$ for the length function of the Coxeter system $(W,S)$.
There exists a unique associative product $\circ : W\times W \to W$
with $s\circ s = s$ for any $s \in S$ and $u\circ v = uv$ for any $u,v \in W$ such that
$\ell(uv) = \ell(u) + \ell(v)$ \cite[Theorem 7.1]{Humphreys}.
This is sometimes called the \emph{Demazure product} or \emph{Hecke product} of $(W,S)$.
The pair $(W,\circ)$ is sometimes called the \emph{0-Hecke monoid} of $(W,S)$.

Let $\I(W) = \{ y \in W: y=y^{-1}\}$ be the set of involutions in $W$.
This set is preserved by the conjugation action $ w : y \mapsto w^{-1} \circ y \circ w$ 
of the 0-Hecke monoid $(W,\circ)$.
Indeed, it is a straightforward exercise from the exchange principle
for Coxeter systems (see \cite[\S1.5]{CCG})
to check the identity
\be
\label{sys-eq}
s \circ y \circ s =
\begin{cases}
sys & \text{if $\ell(ys) > \ell(y)$ and $ys \neq sy$} \\
ys & \text{if $\ell(ys) > \ell(y)$ and $ys = sy$} \\
y & \text{if $\ell(ys) < \ell(y)$} \end{cases}
\qquad\text{for $y \in \I(W)$ and $s \in S$},
\ee
which is equivalent to \cite[Lemma 3.4]{H2}.
An \emph{involution word} for $y \in \I(W)$ is a minimal-length sequence
$(r_1, r_2, \ldots, r_l)$ of simple generators $r_i \in S$ such that
\begin{equation*}
    y = r_l \circ (\cdots \circ (r_2 \circ (r_1 \circ 1 \circ r_1) \circ r_2) \circ \cdots) \circ r_{l}.
\end{equation*}
The parentheses make clear how to evaluate the right hand expression using \eqref{sys-eq},
but are actually superfluous since $\circ$ is associative.
Let $\iR(y)$ be the set of involution words for $y \in \I(W)$. 
This set is always nonempty,
with $\iR(1) = \{\emptyset\}$ where $\emptyset$ is the empty word.
Define $\ellhat(y)$ for $ y \in \I(W)$ to be the common length of any word in $\iR(y)$.

\begin{example} \label{ex:inv-word}
Let $s_i  \in A_n= S_{n+1}$ and $t_i \in C_n$ be as in \eqref{st-eq}. In $A_2$, we have
\[
s_2 \circ (s_1 \circ 1 \circ s_1) \circ s_2 = s_2 \circ s_1 \circ s_2 = s_2 s_1 s_2 
\quand
s_1 \circ (s_2 \circ 1 \circ s_2) \circ s_1 = s_1 \circ s_2 \circ s_1 = s_1s_2 s_1
\]
and it holds that $w^A_2 = s_1s_2s_1 = s_2s_1s_2$ and $\iR(w_2^A) = \{ (s_1,s_2), (s_2,s_1)\}$.
In $C_2$, we have
\[t_0 \circ (t_1 \circ (t_0 \circ 1 \circ t_0) \circ t_1) \circ t_0
        = t_0 \circ (t_1 \circ t_0 \circ t_1) \circ t_0
        = t_0 \circ t_1t_0t_1 \circ t_0
        = t_0t_1t_0t_1 = t_1t_0t_1t_0
        =w^C_2\]
and  
$t_1 \circ (t_0 \circ  (t_1 \circ 1 \circ t_1) \circ t_0) \circ t_1 = w^C_2$
and  it holds that
$\iR(w^C_2) = \{(t_0, t_1, t_0), (t_1, t_0, t_1)\}$.
\end{example}

Involution words first appeared in work of Richardson and Springer \cite{RichSpring,RichSpring2}, and have since been studied by various authors: Can, Joyce and Wyser \cite{CJW}, the authors and Hamaker \cite{HMP1, HMP2, HMP3, HMP4, HMP5}, Hu and Zhang \cite{HuZhang1, HuZhang2}, Hultman \cite{H2}, and Hansson and Hultman \cite{HH}. In \cite{HMP1}, the authors and Hamaker showed that
\begin{equation} \label{eq:type-A-inv-enum}
|\iR(w_n^A)| = {{p+1\choose 2} + {q+1 \choose 2} \choose {p+1 \choose 2}}|\SYT(\stair_{p})| |\SYT(\stair_{q})|
\end{equation}
where $p = \lfloor \frac{n}{2} \rfloor$ and $q = \lceil \frac{n}{2} \rceil$, 
and conjectured the following theorem, which   is our main result.

\begin{theorem}
\label{thm:main-enum} 
  For any positive integer $n$, it holds that 
  $|\iR(w_n^C)| = |\SYT(\stair_n)| = |\Red(w_n^A)|$.
\end{theorem}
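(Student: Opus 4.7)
The second equality $|\SYT(\stair_n)| = |\Red(w_n^A)|$ is the classical Stanley-Edelman-Greene identity noted immediately after \eqref{eq:product-formulas}, so my plan concentrates on proving $|\iR(w_n^C)| = |\SYT(\stair_n)|$ through symmetric function methods that parallel Stanley's approach in type A. First I would define, for each signed involution $y \in \I(C_n)$, an involution Stanley symmetric function $\iF_y$ as a quasisymmetric generating function summing $x_{a_1} x_{a_2} \cdots x_{a_\ell}$ over all weakly increasing sequences $a_1 \le a_2 \le \cdots \le a_\ell$ compatible with some involution word in $\iR(y)$, where compatibility enforces strict increase at each ascent position of the word. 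By construction, the coefficient of $x_1 x_2 \cdots x_{\ellhat(y)}$ in $\iF_y$ is exactly $|\iR(y)|$.

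Second I would prove that $\iF_y$ is actually symmetric and expands positively in the Schur (or Schur $P$-function) basis via an insertion algorithm analogous to Edelman-Greene in type A, Kraskiewicz-Haiman in type C, and the involution insertions developed in \cite{HMP1, HMP5}. Such an algorithm would send each pair (involution word for $y$, compatible sequence) to a pair $(P, Q)$, where $P$ is an insertion tableau depending only on the word and $Q$ is a standard (or shifted) tableau of the same shape whose descents record the compatible sequence. Summing yields an expansion $\iF_y = \sum_\lambda a_\lambda^y s_\lambda$ in which each $a_\lambda^y$ counts the insertion tableaux of shape $\lambda$ that arise from $\iR(y)$.

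Third I would specialize to $y = w_n^C$. The goal is to identify $\iF_{w_n^C}$, at least through the coefficient of the monomial $x_1 x_2 \cdots x_\ell$ with $\ell = \binom{n+1}{2}$, with the classical type A Stanley symmetric function $F_{w_n^A} = s_{\stair_n}$. Extracting that coefficient gives $|\iR(w_n^C)| = |\SYT(\stair_n)|$, and combining with Stanley-Edelman-Greene completes the theorem.

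The main obstacle lies in the last step: for a generic involution $y$ the expansion of $\iF_y$ is intricate, and collapsing $\iF_{w_n^C}$ to a single term $s_{\stair_n}$ requires substantial cancellation or structural simplification. The most plausible route is to show that every signed involution word for $w_n^C$ produces the same insertion tableau $P$ of staircase shape $\stair_n$ under the algorithm of the second step, so that only the recording tableau varies and $|\iR(w_n^C)| = |\SYT(\stair_n)|$ follows immediately. Such a uniqueness statement should rest on symmetry properties specific to the longest element of $C_n$ (its self-inverse descent structure and fixed-point-free conjugate), echoing the classical fact that every reduced word for $w_n^A$ inserts to the same superstandard staircase tableau under Edelman-Greene. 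An alternative is to construct a direct bijection $\iR(w_n^C) \to \Red(w_n^A)$ using Kraskiewicz-Haiman insertion on one side and Edelman-Greene on the other, bypassing the full symmetric-function machinery but still requiring the same shape-identification at its core.
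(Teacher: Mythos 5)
Your outline correctly reduces everything to showing that $|\iR(w_n^C)|$ is counted by standard staircase tableaux, but the step that would actually accomplish this is not carried out: the existence of an Edelman--Greene/Kra\'skiewicz-style insertion for type C \emph{involution} words under which every element of $\iR(w_n^C)$ produces one and the same tableau of shape $\delta_n$ is precisely the hard content of the theorem, and you give no construction of such an algorithm (none is known in the literature). Asserting that a "uniqueness statement should rest on symmetry properties specific to the longest element" restates the goal rather than proving it. There is also a concrete technical problem earlier in the plan: for words in the generators $t_0,\dots,t_{n-1}$ of $C_n$, the generating function over descent-compatible weakly increasing sequences is \emph{not} symmetric in general; the correct type C Stanley symmetric functions (the paper's $\G_w$, following Billey--Haiman, Fomin--Kirillov, and Lam) must be defined with peak-based compatibility conditions and weights $2^{|\{i_1,\dots,i_l\}|}$, and they expand in Schur $Q$-functions rather than Schur functions. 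So your step 2, as stated, would fail before reaching the shape-identification step, and the relevant target identity is $\iG_{w_n^C}=S_{\delta_n}$ (whose squarefree coefficient is $2^N|\SYT(\delta_n)|$, matching $2^N|\iR(w_n^C)|$), not an equality with $s_{\delta_n}$.

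The paper takes an entirely different, non-insertion route: it expresses $\iG_{w_n^C}=\sum_{v\in\cA_n}\G_v$ over the atoms of $w_n^C$, and then uses Billey's transition formula iteratively along an explicitly constructed directed graph on atoms and "quasi-atoms" whose unique source is $w_n^A$ and whose sinks are exactly $\cA_n$; a telescoping argument (Lemma~\ref{lem:transition-graphs}) yields $\iG_{w_n^C}=\G_{w_n^A}=S_{\delta_n}$, from which the count follows by extracting the coefficient of $x_1x_2\cdots x_N$. The combinatorial input making this work is the characterization of $\cA_n$ from \cite{HMP6}, not any insertion algorithm. If you want to pursue your bijective program, you would first need to invent an involution analogue of Kra\'skiewicz insertion and prove the common-shape property for $w_n^C$ --- a substantial open problem in its own right.
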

 
 
    There is an algebraic approach to enumerating  $\Red(w_n^A)$, $\Red(w_n^C)$, $\iR(w_n^A)$, and $\iR(w_n^C)$ by means of certain generating functions called \emph{Stanley symmetric functions}.
    We write $[x_1 x_2 \cdots]f$ for the coefficient of a square-free monomial in 
    a homogeneous symmetric function $f$.
  The  Stanley symmetric functions of interest, which will be defined 
  in Section~\ref{prelim-stan-sect},
  have the following properties:
\begin{itemize}
    \item The (type A) Stanley symmetric function $\F_w$ of $w \in A_n$ has $[x_1 x_2 \cdots] \F_w = |\Red(w)|$.
    \item The (type C) Stanley symmetric function $\G_w$ of $w \in C_n$ has $[x_1 x_2 \cdots] \G_w = 2^{\ell(w)} |\Red(w)|$.
    \item The (type A) involution Stanley symmetric function $\iF_y$ of $y \in \I(A_n)$ 
    is a multiplicity-free sum of certain instances of $\F_w$, and 
    has $[x_1 x_2 \cdots] \iF_y = |\iR(y)|$.
    \item The (type C) involution Stanley symmetric function $\iG_y$ of $y \in \I(C_n)$ 
    is a multiplicity-free sum of certain instances of $\G_w$, and 
    has $[x_1 x_2 \cdots] \iG_y = 2^{\hat \ell(y)}|\iR(y)|$.
\end{itemize}
There are expressions for $\F_{w^A_n}$, $\G_{w^C_n}$, and $\iF_{w^A_n}$ as \emph{Schur functions} $s_\lambda$,  \emph{Schur $Q$-functions} $Q_\lambda$, and \emph{Schur $S$-functions} $S_\lambda$.
For the definitions of these symmetric functions, see Section~\ref{sym-sect}.
 The identities \eqref{eq:product-formulas} and \eqref{eq:type-A-inv-enum}
are corollaries of these formulas:

\begin{theorem}[{Stanley \cite[Corollary 4.2]{Stanley}}]
\label{intro-stan-thm}
It holds that
$\F_{w_n^A} = s_{\stair_n}$.
\end{theorem}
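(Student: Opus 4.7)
The plan is to use the Edelman--Greene correspondence, which expresses $\F_w$ combinatorially as a positive sum of Schur functions. Edelman and Greene's insertion algorithm gives a bijection
$$\Red(w) \;\longleftrightarrow\; \bigsqcup_\lambda \mathrm{EG}(w,\lambda) \times \SYT(\lambda),$$
where $\mathrm{EG}(w,\lambda)$ is the set of column-strict increasing tableaux of shape $\lambda$ whose column reading word is a reduced word for $w$. Combined with the fundamental-quasisymmetric expansions of $\F_w$ and $s_\lambda$, this upgrades to the Schur expansion
$$\F_w = \sum_\lambda |\mathrm{EG}(w,\lambda)| \, s_\lambda.$$

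Since $\ell(w_n^A) = \binom{n+1}{2} = |\delta_n|$, the theorem reduces to verifying that $|\mathrm{EG}(w_n^A,\lambda)|$ equals $1$ when $\lambda = \delta_n$ and $0$ otherwise. Existence is immediate: the staircase tableau $T_0$ of shape $\delta_n$ whose $i$th row (from the top) reads $(i, i+1, \ldots, n)$ has column reading word
$$(s_n s_{n-1} \cdots s_2 s_1)(s_n s_{n-1} \cdots s_2) \cdots (s_n s_{n-1})(s_n),$$
which is a standard reduced word for $w_n^A$ (as one checks by direct multiplication).

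For uniqueness, the cleanest route is to appeal to the general principle that Stanley symmetric functions of \emph{vexillary} (equivalently, $2143$-avoiding) permutations equal single Schur functions indexed by their shapes. The permutation $w_n^A$ is vexillary with code $(n, n-1, \ldots, 1, 0)$, whose non-increasing rearrangement is $\delta_n$, so this principle immediately gives $\F_{w_n^A} = s_{\delta_n}$. For a self-contained argument one can instead induct on $n$: show that the rightmost occurrence of the letter $s_n$ in any reduced word for $w_n^A$ occupies a constrained set of positions, and that removing it together with the cell containing $n$ in the top row of the EG tableau yields a valid EG tableau for $w_{n-1}^A \in S_n$ of shape $\delta_{n-1}$.

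The main obstacle in the direct approach is verifying that this deletion is genuinely compatible with the EG bumping rule along the rightmost column of the tableau; this requires a careful but routine case analysis, and is exactly the step where the vexillary shortcut sidesteps significant bookkeeping.
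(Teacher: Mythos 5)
The paper does not actually prove this statement---it is imported directly from Stanley's paper as a citation---so there is no internal argument to measure you against; I will assess your proposal on its own terms. Your ``vexillary shortcut'' is in fact not a shortcut around the cited source but a reproduction of it: in \cite{Stanley}, Corollary 4.2 is deduced from Theorem 4.1, which is precisely the statement that $\F_w$ is the single Schur function $s_{\lambda(w)}$ for $w$ vexillary, applied to the dominant permutation $w_n^A$ with code $(n,n-1,\dots,1,0)$. So that branch of your argument is complete and correct, with the understanding that the vexillary theorem carries essentially all of the content. Your Edelman--Greene setup is also correct up to minor conventions (the insertion tableaux are row- \emph{and} column-strict, and the standard characterization is in terms of the row reading word, though a column-word version can be arranged), and the existence half is right: the filling of $\delta_n$ with $(i,j)$-entry $i+j-1$ is increasing and its reading word $(s_n\cdots s_1)(s_n\cdots s_2)\cdots(s_n)$ is a reduced word for $w_n^A$ of length $\binom{n+1}{2}$, so $s_{\delta_n}$ occurs with coefficient at least $1$.

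The one genuine gap is the uniqueness step of the ``self-contained'' branch, which you acknowledge but do not carry out, and which is less routine than the phrase ``careful but routine case analysis'' suggests. The obvious constraints---that any shape $\lambda$ in the expansion satisfies $\lambda\subseteq(n^n)$ and $|\lambda|=\binom{n+1}{2}$, together with the self-conjugacy of the expansion coming from $w_n^A=(w_n^A)^{-1}$---do not isolate $\delta_n$: for $n=4$, for instance, the conjugate pair $(4,4,1,1)$ and $(4,2,2,2)$ survives all of them. Ruling out such shapes requires an actual structural analysis of EG tableaux for the longest element (or the transition recurrence, or the vexillary theorem itself). As written, then, your proof stands or falls with the citation of the vexillary principle; that is an acceptable way to reprove a classical cited result, but it should be presented as the proof rather than as one of two interchangeable options, since the second option is not completed.
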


\begin{theorem}[{Worley \cite[Eq.\ (7.19)]{Worley}; Billey and Haiman \cite[Proposition 3.14]{BH}}]
\label{intro-wor-thm}
It holds that
\[\G_{w_n^C} =Q_{(2n-1,2n-3,\dots,3,1)} = S_{(n^n)}.\]
\end{theorem}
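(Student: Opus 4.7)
The plan is to prove the two equalities separately, using combinatorial insertion for the first and a classical symmetric function identity for the second.

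For the first equality $\G_{w_n^C} = Q_{(2n-1,2n-3,\ldots,3,1)}$, I would invoke the Kra\'skiewicz--Haiman insertion algorithm, which gives a weight-preserving bijection between reduced words for signed permutations $w \in C_n$ and pairs $(P,Q)$ of (primed) shifted tableaux of a common strict shape $\lambda$. Summing over insertion tableaux refines the Stanley symmetric function as $\G_w = \sum_\lambda c_{w,\lambda} Q_\lambda$ for nonnegative integers $c_{w,\lambda}$ counting recording tableaux of shape $\lambda$. Specializing to $w = w_n^C$, the task is to show that the only shape appearing is the shifted staircase $\rho_n := (2n-1,2n-3,\ldots,3,1)$, and that it appears with multiplicity one. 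Two length-based constraints are at work: any such $\lambda$ must be a strict partition of $\ell(w_n^C) = n^2$, and the structure of the insertion forces $\lambda$ to fit inside the $n \times (2n-1)$ rectangle. Since $\rho_n$ is the unique strict partition of $n^2$ of this shape, this identifies $\lambda = \rho_n$; a separate argument (e.g., direct enumeration or induction on $n$) yields $c_{w_n^C, \rho_n} = 1$.

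For the second equality $Q_{\rho_n} = S_{(n^n)}$, this is a purely symmetric function statement independent of Coxeter theory. Depending on the precise definition of $S_\lambda$ given in Section~\ref{sym-sect}, my plan is either (a) to interpret both sides as generating functions over appropriate families of tableaux (shifted-staircase-shaped versus rectangular) and construct a shape-preserving bijection between them; or (b) to invoke Stembridge's expansion $Q_\lambda = \sum_\mu g^\mu_\lambda s_\mu$ and verify that, for the shifted staircase $\lambda = \rho_n$, the only $\mu$ contributing is $\mu = (n^n)$, with the appropriate coefficient absorbed into the definition of $S_\lambda$. A Pfaffian or Jacobi--Trudi determinantal formula for $Q_{\rho_n}$ offers a third viable path, matching coefficients against a corresponding determinantal expansion of $S_{(n^n)}$.

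The main obstacle is the uniqueness claim in the first part: ruling out all strict partitions of $n^2$ other than $\rho_n$ as possible shapes in the decomposition of $\G_{w_n^C}$. Cardinality constraints alone admit many candidate shapes, so the argument requires a genuine understanding of the Kra\'skiewicz--Haiman insertion applied to reduced words of the longest element. I would expect to either find a direct structural reason (such as a symmetry of $w_n^C$ under which the insertion tableau must be invariant) or proceed by induction, reducing the claim for $C_n$ to a statement about $C_{n-1}$ via the removal of a boundary generator. Once the shape is pinned down, the multiplicity computation and the second equality should be comparatively routine.
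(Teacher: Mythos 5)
The paper does not prove this statement; it is quoted from Worley and from Billey--Haiman, so there is no internal argument to compare yours against. Judged on its own terms, your outline has a genuine gap at its central step. You assert that the shapes $\lambda$ occurring in the insertion-based expansion $\G_{w_n^C} = \sum_\lambda c_{w_n^C,\lambda}\, Q_\lambda$ are strict partitions of $n^2$ fitting in an $n\times(2n-1)$ rectangle, and that $(2n-1,2n-3,\dots,1)$ is the unique such partition. That uniqueness claim is false: for $n=3$ the strict partitions $(5,4)$ and $(4,3,2)$ of $9$ also fit in a $3\times 5$ rectangle. You then concede in your final paragraph that ruling out the other shapes is ``the main obstacle,'' but you offer no mechanism for doing so, so as written the first equality is not established. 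The standard route (essentially Billey--Haiman's) is not a counting argument but a dominance-order sandwich: every shape occurring in the expansion of the type B/C Stanley symmetric function of $w$ is trapped between two explicit strict partitions determined by $w$ and $w^{-1}$, and for $w=w_n^C$ these two bounds coincide with the shifted staircase, forcing a single shape with coefficient $1$. Without a statement of this kind, or an equivalent structural property of Kra\'skiewicz insertion, your argument does not close.

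Two smaller points. First, mind the normalization: the paper's $\G_w$ carries the factor $2^{|\{i_1,\dots,i_l\}|}$ and differs from the type B Stanley symmetric function by $2^{\ell_0(w)}$, while $Q_\lambda = 2^{\ell(\lambda)}P_\lambda$; for $w_n^C$ one has $\ell_0(w_n^C)=n$, which equals the number of parts of $(2n-1,2n-3,\dots,1)$, so the bookkeeping works out, but this must be checked explicitly if you import results stated for $P$-functions or for the type B normalization. Second, the identity $Q_{(2n-1,2n-3,\dots,1)} = S_{(n^n)}$ is indeed a self-contained symmetric-function fact (Worley's Eq.\ (7.19); see also the references to Macdonald given in Section~\ref{sym-sect}), and any of your three proposed routes for it is viable, though none is carried out here.
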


\begin{theorem}[{Hamaker, Marberg, and Pawlowski \cite[Corollary 1.14]{HMP4}}]
It holds that
\[\iF_{w_n^A} =2^{-q}Q_{(n,n-2,n-4,\dots)} = s_{\delta_{p}} s_{\delta_{q}}\]
where $p = \lfloor \frac{n}{2} \rfloor$ and $q = \lceil \frac{n}{2} \rceil$.
\end{theorem}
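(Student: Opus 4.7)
The plan is to prove the two equalities in the theorem separately, combining a combinatorial insertion algorithm with a classical symmetric-function identity.

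For the first equality $\iF_{w_n^A} = 2^{-q}\, Q_{(n, n-2, n-4, \ldots)}$, I start from the decomposition $\iF_y = \sum_w \F_w$ stated in the introductory bullet list, where $w$ ranges over the atoms of $y \in \I(A_n)$ (the permutations $w$ of minimal length whose action via \eqref{sys-eq} transforms $1$ into $y$). The first step is to identify the atoms of $y = w_n^A$ explicitly by analyzing the Hecke-monoid action on $\I(A_n)$. The second step is to develop an involution analogue of the Edelman--Greene correspondence: a bijection from $\iR(y)$ to pairs $(P_y, Q)$, where $P_y$ is a shifted tableau depending only on $y$ and $Q$ ranges over shifted standard Young tableaux of some prescribed shifted shape. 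Tracking weights yields an expansion
\[
\iF_y = \sum_\mu c_{y,\mu}\, 2^{-\ell(\mu)} Q_\mu
\]
over strict partitions $\mu$, with $2^{-\ell(\mu)} Q_\mu = P_\mu$ the Schur $P$-function. For $y = w_n^A$, one then argues this sum collapses to the single term $\mu = (n, n-2, n-4, \ldots)$ with coefficient one, by verifying that every atom of $w_n^A$ inserts to the same shifted shape.

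For the second equality $2^{-q}\, Q_{(n, n-2, n-4, \ldots)} = s_{\delta_p} s_{\delta_q}$, rewrite the left-hand side as $P_{(n, n-2, n-4, \ldots)}$ via $Q_\mu = 2^{\ell(\mu)} P_\mu$. This is a classical symmetric-function identity, and the cleanest proof is bijective: construct a weight-preserving correspondence between shifted semistandard tableaux of shape $(n, n-2, n-4, \ldots)$ and pairs of semistandard Young tableaux of shapes $\delta_p$ and $\delta_q$. Such a bijection is suggested by the geometry of the shifted diagram, which decomposes combinatorially into two interlocking staircases of sizes $\binom{q+1}{2}$ and $\binom{p+1}{2}$. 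An alternative algebraic route uses the Pragacz--Nimmo Pfaffian formula for $P_\mu$ together with the Jacobi--Trudi expansion of $s_{\delta_p} s_{\delta_q}$, reducing the identity to a Pfaffian-determinantal verification.

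The main obstacle is the first equality, specifically building the involution Edelman--Greene insertion so that it respects the atom decomposition and produces only a single shifted shape when $y = w_n^A$. This step relies on the unusually rigid atom structure of the longest element: one must both characterize the atoms of $w_n^A$ precisely and verify that they all share the same insertion shape. By contrast, the identity in the second equality, while not entirely trivial, is a well-known combinatorial fact that fits squarely within the developed theory of shifted tableaux and symmetric functions.
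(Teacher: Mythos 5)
This statement is quoted from \cite[Corollary 1.14]{HMP4}; the present paper gives no proof of it, so there is no in-paper argument to measure your proposal against. Judged on its own, your outline points at the right ingredients (the atom decomposition $\iR(y)=\bigsqcup_{w\in\cA(y)}\cR(w)$, a shifted/involution analogue of Edelman--Greene, Schur $P$-functions $2^{-\ell(\mu)}Q_\mu$, and the classical staircase identity), and this is broadly the territory in which \cite{HMP4} operates. But as a proof it has genuine gaps: every hard step is asserted rather than carried out. Concretely, (i) you never actually determine $\cA(w_n^A)$; (ii) the ``involution analogue of the Edelman--Greene correspondence,'' together with the resulting Schur-$Q$-expansion $\iF_y=\sum_\mu c_{y,\mu}\,2^{-\ell(\mu)}Q_\mu$ with nonnegative integer $c_{y,\mu}$, is essentially the main theorem of \cite{HMP4} and cannot be invoked as a black box --- one must construct the insertion, prove it is a bijection compatible with the atom decomposition, and control which recording tableaux occur; and (iii) the collapse to the single term $\mu=(n,n-2,n-4,\dots)$ with coefficient $1$ is exactly the content of the first equality, and ``verifying that every atom of $w_n^A$ inserts to the same shifted shape'' is the claim, not an argument for it. Some mechanism specific to the longest element (e.g.\ a dominance/staircase-code argument, or an explicit description of the atoms of $w_n^A$ analogous to the type C description in Section~\ref{atoms-sect}) is needed here and is missing.

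The second equality $2^{-q}Q_{(n,n-2,n-4,\dots)}=s_{\delta_p}s_{\delta_q}$ is indeed a known symmetric-function identity and your two suggested routes (a fold/unfold bijection on shifted tableaux, or Pfaffian/Jacobi--Trudi manipulations) are both viable in principle; note that the number of parts of $(n,n-2,n-4,\dots)$ is $q=\lceil n/2\rceil$, so the scalar is consistent. Still, neither route is executed: the bijection is only ``suggested by the geometry'' (and must be set up carefully to account for the primed entries and the diagonal convention in marked shifted tableaux), and the Pfaffian computation is not performed. If you intend to use this identity, the honest options are to cite it or to supply one of these arguments in full; as written, both halves of the proposal are plans rather than proofs.
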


We prove Theorem~\ref{thm:main-enum} enumerating $\iR(w_n^C)$ by adding an entry for $\iG_{w_n^C}$ to this list.
Here, we define $\G_w$ for (unsigned) permutations $w$ by identifying $w\in A_n$ with the signed permutation in $C_{n+1}$
mapping $i \mapsto w(i)$ and $-i \mapsto -w(i)$ for $i \in [n+1]$.

\begin{theorem} \label{thm:main-stanley} 
It holds that
$\iG_{w_n^C} = \G_{w_n^A} = S_{\stair_n}$.
\end{theorem}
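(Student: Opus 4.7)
The plan is to prove the two equalities $\iG_{w_n^C} = \G_{w_n^A}$ and $\G_{w_n^A} = S_{\delta_n}$ separately.

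The identity $\G_{w_n^A} = S_{\delta_n}$ should follow comparatively easily from existing results. Under the embedding $A_n \hookrightarrow C_{n+1}$, the element $w_n^A$ preserves signs, so all of its reduced words use only the generators $t_1, \ldots, t_n$ of $C_{n+1}$, which satisfy the same braid relations as $s_1, \ldots, s_n \in A_n$. Combining the Billey-Haiman expansion of type C Stanley symmetric functions as nonnegative integer combinations of Schur $S$-functions with the classical identity $\F_{w_n^A} = s_{\delta_n}$ from Theorem \ref{intro-stan-thm}, the type A formula upgrades to the desired type C statement.

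For the identity $\iG_{w_n^C} = \G_{w_n^A}$, my plan is to use the decomposition $\iG_{w_n^C} = \sum_{w \in \cA(w_n^C)} \G_w$ referenced in the introduction, where $\cA(w_n^C) \subseteq C_n$ is the associated set of \emph{atoms} of $w_n^C$, consisting of those $w$ whose reduced words appear among the involution words of $w_n^C$. The task then becomes establishing $\sum_{w \in \cA(w_n^C)} \G_w = \G_{w_n^A}$ as a symmetric function identity. An attractive route is insertion-theoretic: develop a type C analog of the involution Edelman-Greene correspondence used in \cite{HMP4}, and apply it to involution words of $w_n^C$. Each such word should insert to a standard shifted tableau of shape $\delta_n$, matching the output of Kra\'skiewicz or Haiman insertion on reduced words of $w_n^A$. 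Weight-preservation with respect to compatible sequences would then promote the bijection to the required equality of symmetric functions.

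The main obstacle is the identification $\sum_{w \in \cA(w_n^C)} \G_w = \G_{w_n^A}$. Since the atoms of $w_n^C$ live in $C_n$ while $w_n^A$ lives in $C_{n+1}$, any bijective proof must carefully bridge these two groups. Explicitly describing $\cA(w_n^C)$, designing the corresponding insertion algorithm, and verifying that its output always has shape $\delta_n$ are the substantive steps; a purely algebraic alternative would be to derive the identity via a transition or recursion argument that peels off generators one at a time, reducing to smaller cases of the theorem by induction on $n$.
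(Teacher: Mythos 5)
Your first equality, $\G_{w_n^A} = S_{\delta_n}$, is essentially fine, though the tool you want is not the Billey--Haiman Schur-$Q$-expansion but Lam's superfication theorem (Theorem~\ref{super-thm}): $\phi(\F_w) = \G_{\iota(w)}$ for unsigned $w$, which together with $\F_{w_n^A} = s_{\delta_n}$ and $\phi(s_{\delta_n}) = S_{\delta_n}$ gives the claim immediately. Your appeal to "the type A formula upgrades" is doing the work that this theorem does, so you should cite it rather than gesture at positivity of the $S$-expansion (which by itself does not determine the coefficients).

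The second equality, $\iG_{w_n^C} = \sum_{v \in \cA_n}\G_v = \G_{w_n^A}$, is where the entire content of the theorem lives, and your proposal does not contain a proof of it: you offer two possible strategies and explicitly defer "the substantive steps." Neither strategy is carried out. No type C involution Edelman--Greene insertion is known, so the insertion route is not available off the shelf; and the "transition or recursion" route, which is the one the paper actually takes, is not a straightforward induction on $n$ reducing to $\iG_{w_{n-1}^C}$. What is actually required is: (i) the explicit description of $\cA_n$ as a disjoint union of intervals $\cA_M$ indexed by noncrossing symmetric matchings $M$ (Theorem~\ref{interval-thm}, imported from \cite{HMP6}); (ii) the introduction of an auxiliary class of \emph{quasi-atoms} $\cQ_n = \cQ_n^+ \sqcup \cQ_n^-$ with their own matching invariants; (iii) the construction of a bipartite directed graph $\GGG$ built from shifted layers $\QQq{m,n+1}$, whose edges are verified (Theorems~\ref{qqq-thm} and~\ref{st-thm}) to realize Billey's transition identity $\sum_{v\in\cT_j^+(w)}\G_v = \sum_{v\in\cS_j(w)}\G_v + \sum_{v\in\cT_j^-(w)}\G_v$ at every odd vertex, with unique source $w_n^A$ and sink set exactly $\cA_n$; and (iv) the telescoping Lemma~\ref{lem:transition-graphs}. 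The fact that the transition relations close up precisely on the atoms and quasi-atoms is the heart of the matter (the paper calls it "somewhat miraculous"), and nothing in your outline identifies the quasi-atoms, the graph, or why the recursion terminates on $\cA_n$ rather than producing extraneous terms. As written, the proposal is a research plan rather than a proof.
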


Our proof in Section~\ref{graph-sect} of this result proceeds as follows. One can define $\iG_{w_n^C}$ as a sum $\sum_{v \in \cA_n} \G_v$ indexed by a certain set $\cA_n$ of signed permutations $v \in C_n$, the \emph{atoms} of $w_n^C$. The transition formula of Lascoux-Sch\"utzenberger \cite{lascouxschutzenbergertree} as adapted by Billey \cite{BilleyTransitions} generates various identities between sums of type C Stanley symmetric functions. Work of Lam implies that $\G_{w_n^A} = S_{\stair_n}$ \cite{TKLam}, and we apply Billey's transition formula iteratively to rewrite $\G_{w_n^A}$ as the sum 
$\sum_{v \in \cA_n} \G_v$. 
The fact that this is possible is somewhat miraculous.
Our arguments rely heavily on a recent  characterization of the atoms of $w_n^C$
by the first author and Hamaker \cite{HMP6}.

\subsection*{Acknowledgements}

This work was partially supported by HKUST grant IGN16SC11.
We are grateful to Zach Hamaker for helpful feedback and discussions.

\section{Preliminaries}

\subsection{Symmetric functions}\label{sym-sect}

Fix a partition $\lambda = (\lambda_1 \geq \lambda_2 \geq \cdots \geq \lambda_k > 0)$.
The \emph{Young diagram} of $\lambda$ is the set of pairs $D_\lambda \omdef = \{ (i,j ) : i \in [k]\text{ and }j \in [\lambda_i]\}$, which we envision as a collection of left-justified boxes oriented as in a matrix.
%
A \emph{semistandard tableau} of shape $\lambda$ is a filling of the boxes of the Young diagram
$D_\lambda$ by positive integers, such that each row is weakly increasing from left to right
and each column is (strictly) increasing from top to bottom.
Such a tableau is \emph{standard} if its boxes contain exactly the numbers $1,2,\dots,|\lambda|$.

Similarly, a \emph{marked semistandard tableau} of shape $\lambda$ is a filling of the Young diagram of $\lambda$ by numbers from the alphabet of primed and unprimed positive integers
 $\{1,2,3,\dots\} \sqcup \{ 1',2',3',\dots\}$ such that
(i) the rows and columns are weakly increasing under the order $1' < 1 < 2' < 2 < \cdots$,
(ii) no unprimed letter $i$ appears twice in the same column,   
and (iii) no primed letter $i'$ appears twice in the same row.

Assume $\lambda $ is a strict partition, i.e., has all distinct parts.
A \emph{marked semistandard shifted tableau} of shape
$\lambda$
is a filling of the \emph{shifted Young diagram} $  \{ (i,i+j-1) : (i,j) \in D_\lambda\}$ with primed and unprimed positive integers
satisfying properties (i)-(iii) from the previous paragraph.
A semistandard marked (shifted) tableau $T$ of shape $\lambda$ is \emph{standard} 
if exactly one of $i$ or $i'$ appears in $T$ for each $i =1,2,\dots, |\lambda|$.

Given a (marked) semistandard (shifted) tableau $T$,
  write
$x^T$ for the monomial formed by replacing the boxes in $T$
containing $i$ or $i'$ by $x_i$ and then
multiplying the resulting variables.

\newcommand{\oneprime}{1'}
\newcommand{\fourprime}{4'}
\newcommand{\twoprime}{2'}
\begin{example}
If $T$, $U$, and $V$ are the tableaux of shape $\lambda = (4,3,1)$ given by
\[
    T = {\small \young(2223,334,5)}
    \qquand
    U = {\small \young(\oneprime113,\oneprime3\fourprime,5)}
    \qquand
    V= {\small \young(1\twoprime33,:\twoprime46,::5)}
\]
then $T$ is semistandard, $U$ is marked and semistandard,
and $V$ is marked, semistandard, and shifted.
We have $x^T = x_2^3 x_3^3 x_4x_5$ and $x^U = x_1^4 x_3^2 x_4 x_5$ and
$x^V = x_1x_2^2 x_3^2x_4x_5x_6$.
\end{example}

\begin{definition}
Let $\lambda$ be a partition and let $\mu$ be a strict partition.
The \emph{Schur function} of $\lambda$, the \emph{Schur $S$-function} 
of $\lambda$, and the \emph{Schur $Q$-function} of $\mu$ are then the respective sums
\[s_{\lambda} \omdef = \sum_{T} x^T, \qquad S_\lambda \omdef = \sum_U x^U,
\qquand 
Q_{\mu} \omdef = \sum_{V} x^V
\]
where $T$ runs over all semistandard tableaux of shape $\lambda$,
$U$ runs over all semistandard marked tableaux of shape $\lambda$,
and $V$ runs over all marked semistandard shifted tableaux of shape $\mu$.
\end{definition}

\def\iprime{i'}
\def\jprime{j'}
\def\kprime{k'}

The power series $s_\lambda$, $S_\lambda$, and $Q_\mu$ are all symmetric functions.
For example, we have
\[ 
\ba
4s_{(2,1)}
 = Q_{(2,1)} &=  
\sum_{i<j<k} \(
 x^{\tiny{\young(ij,:k)}} +  x^{\tiny{\young(\iprime j,:k)}} +
  x^{\tiny{\young(i\jprime,:k)}} +  x^{\tiny{\young(\iprime \jprime,:k)}} +
   x^{\tiny{\young(ij,:\kprime)}} +  x^{\tiny{\young(\iprime j,:\kprime)}} +
    x^{\tiny{\young(i\jprime,:\kprime)}} +  x^{\tiny{\young(\iprime \jprime,:\kprime)}}
\)
\\&\quad +
\sum_{i<j} \(
x^{\tiny{\young(ii,:j)}} + x^{\tiny{\young(\iprime i,:j)}} +
x^{\tiny{\young(ii,:\jprime)}} + x^{\tiny{\young(\iprime i,:\jprime)}}
+
x^{\tiny{\young(i\jprime,:j)}} + x^{\tiny{\young(\iprime\jprime,:j)}} +
x^{\tiny{\young(i\jprime,:\jprime)}} + x^{\tiny{\young(\iprime\jprime,:\jprime)}}
\) 
\\
&
=\sum_{i<j<k} 8x_ix_j x_k  + \sum_{i <j }4x_i^2 x_j  +  \sum_{i <j }4x_i x_j^2
\ea
\]
and
$
S_{(2,1)} = 
Q_{(2,1)} + 
Q_{(3)}
$.

The Schur functions $s_\lambda$, with $\lambda$ ranging over all partitions,
form a basis for the algebra $\Lambda$ of symmetric functions.
Similarly, the Schur $Q$-functions $Q_\mu$, with $\mu$ ranging over all strict partitions,
form a basis for the subalgebra $\Gamma\subset \Lambda$ generated by the odd-indexed power sum symmetric 
functions. Each Schur $Q$-function is itself Schur-positive, i.e., a linear combination of Schur functions
with positive integer coefficients.

The set of Schur $S$-functions, with $\lambda$ ranging over all partitions,
is not linearly independent, but also spans the subalgebra $\Gamma$. The set 
$\{ S_\lambda : \lambda\text{ is a strict partition}\}$ is a second basis for $\Gamma$.
For more properties of these functions, see \cite[Chaper I, \S3]{Macdonald} (for $s_\lambda$),
\cite[Chapter III, \S8]{Macdonald} (for $Q_\lambda$), and
\cite[Chapter III, \S8, Ex.\ 7]{Macdonald} (for $S_\lambda$).

\subsection{Stanley symmetric functions}\label{prelim-stan-sect}

We review the definitions of the Stanley symmetric functions (see \cite{BilleyTransitions,BH,FominKirillov,Stanley}) and involution Stanley symmetric functions (see \cite{HMP1,HMP4}) mentioned in the introduction.

\begin{definition}
The \emph{type A Stanley symmetric function} associated to $w \in A_n = S_{n+1}$ is  
\[ \F_w \omdef = \sum_{\a \in \cR(w)} \sum_{(i_1 \leq i_2 \leq \cdots \leq i_l) \in \cC(\a)} x_{i_1}x_{i_2}\cdots x_{i_l}\]
where for a reduced word $\a = (s_{a_1}, s_{a_2},\cdots, s_{a_l})$, the set $\cC(\a)$ consists of all weakly
increasing sequences of positive integers $i_1\leq i_2 \leq \dots \leq i_l$
such that if $a_j > a_{j+1}$ then $i_j < i_{j+1}$.
\end{definition}

Each $\F_w$ is a linear combination of Schur functions
with positive integer coefficients \cite{EG}.
For example,  
$ \F_{w^A_2} = \sum_{i\leq j<k} x_ix_jx_k + \sum_{i<j\leq k} x_ix_jx_k  = s_{(2,1)}$
as
$\cR(w^A_2) = \{ (s_1,s_2,s_1),(s_2,s_1,s_2)\}$.

\begin{definition}
The \emph{type C Stanley symmetric function} associated to $w \in C_n$ is
\[ \G_w \omdef = \sum_{\a \in \cR(w)} \sum_{(i_1 \leq i_2 \leq \cdots \leq i_l) \in \cD(\a)} 2^{| \{ i_1,i_2,\dots,i_l\}|} x_{i_1}x_{i_2}\cdots x_{i_l}\]
where for a reduced word $\a = (t_{a_1},t_{a_2},\cdots ,t_{a_l})$, the set $\cD(\a)$ consists of all weakly
increasing sequences of positive integers $i_1\leq i_2 \leq \dots \leq i_l$
such that if $a_{j-1} < a_j > a_{j+1}$ for some $j \in [l-1]$ then either $i_{j-1} < i_j \leq i_{j+1}$
or $i_{j-1} \leq i_j < i_{j+1}$.
\end{definition}

Each $\G_w$ is a
linear combination of Schur $Q$-functions with positive integer coefficients \cite[Theorem 3.12]{TKLam}.
It is an instructive exercise to check that $\G_{w^C_2} = Q_{(3,1)} = S_{(2,2)}$ as predicted
by Theorem~\ref{intro-wor-thm};
the details are more involved than in our calculation of $\F_{w^A_2}$, however.

\begin{remark}
The finite Coxeter groups of classical type B 
are the same as the groups $C_n$,
but there is a distinct notion of type B Stanley symmetric functions.
These only differ from $\G_w$
by a scalar factor, however:
 the type B Stanley symmetric function of $w \in C_n$ is $2^{-\ell_0(w)} \G_w$
where $\ell_0(w)$ is the number of indices $i \in [n]$ with $w(i) < 0$; see \cite{BilleyTransitions,BH,FominKirillov}.
\end{remark}

\begin{notation}
The symbols for Stanley symmetric functions
are somewhat inconsistent across the literature.
The use of $F_w$ for type A Stanley symmetric functions, 
following \cite{Stanley},
 is fairly widespread.
 Nevertheless, these functions are denoted $G_w$ in  \cite{BH,BL},
 while  in \cite{BilleyTransitions,BL} the type C Stanley symmetric functions are denoted $F_w$.
Some authors have also used $F_w$ \cite{BH} and $G_w$ \cite{BilleyTransitions,TKLamThesis,TKLam}
for the type B Stanley symmetric functions mentioned in the previous remark.
\end{notation}

There is a unique injective group homomorphism $\iota : A_{n-1} \hookrightarrow C_n$ with
$\iota(s_i) = t_i$ for $i \in [n-1]$. 
If $w \in A_{n-1} = S_n$
then $\iota(w)$
is the signed permutation with $\pm i \mapsto \pm w(i)$ for each $i \in [n]$.
We define 
\[
\G_w \omdef = \G_{\iota(w)}\qquad\text{for $w \in A_{n-1}$.}
\]
Although $s_i \mapsto t_i$ induces a bijection $\cR(w)\to\cR(\iota(w))$,
it is not obvious from the definitions how to relate $\F_w$ and $\G_w$ for $w \in A_{n-1}$.
There is a simple connection, however.
Define 
\[\Lambda \omdef = \QQ\spanning\{ s_\lambda\}\qquand \Gamma \omdef = \QQ\spanning\{ Q_\mu\} = \QQ\spanning\{ S_\mu\}\]
where the first span is over all partitions $\lambda$ and the second two are over all strict partitions $\mu$.
The \emph{superfication map} $\phi : \Lambda \to \Gamma$ is the linear map with $\phi(s_\lambda) = S_\lambda$ for all partitions $\lambda$. This is well-defined since each $S_\lambda$ is a linear combination of $S_\mu$'s with $\mu$ strict.

\begin{theorem}[{Lam \cite[Theorem 3.10]{TKLamThesis}}]
\label{super-thm}
If $w \in A_{n-1}$ then $\phi(\F_w) = \G_w$.
\end{theorem}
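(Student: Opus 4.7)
The plan is to lift the identity to the algebra of quasisymmetric functions, where both $\F_w$ and $\G_w$ admit manifest expansions indexed by reduced words. First I would verify that the inner sum in the definition of $\F_w$ associated to a reduced word $\a = (s_{a_1}, \ldots, s_{a_l}) \in \cR(w)$ is exactly Gessel's fundamental quasisymmetric function $L_{D(\a)}$ in degree $l$, where $D(\a) = \{j : a_j > a_{j+1}\}$ is the descent set; this is immediate, since $\cC(\a)$ is by construction the index set for $L_{D(\a)}$. Hence $\F_w = \sum_{\a \in \cR(w)} L_{D(\a)}$. Next I would use the order-preserving bijection $\cR(w) \to \cR(\iota(w))$ given by $s_i \leftrightarrow t_i$ to write an analogous expansion of $\G_w$: the inner sum weighted by $2^{|\{i_j\}|}$ subject to the ``strict increase on one side of each peak'' condition matches exactly Stembridge's peak quasisymmetric function $K_{P(\a)}$ (in the form introduced in \emph{Enriched $P$-partitions}), where $P(\a) = \{j \geq 2 : a_{j-1} < a_j > a_{j+1}\}$ is the peak set of $\a$. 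Thus $\G_w = \sum_{\a \in \cR(w)} K_{P(\a)}$.

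The key input is then Stembridge's descent-to-peak theorem. Let $\Pi \subset \mathrm{QSym}$ denote the peak algebra spanned by the $K_P$'s, and let $\Theta : \mathrm{QSym} \to \Pi$ be the linear map characterized by $\Theta(L_D) = K_{P(D)}$, where $P(D) = \{j \geq 2 : j \in D,\ j-1 \notin D\}$. Stembridge proved that $\Theta$ restricted to the symmetric subalgebra $\Lambda \subset \mathrm{QSym}$ satisfies $\Theta(s_\mu) = S_\mu$ for every partition $\mu$; that is, $\Theta|_\Lambda = \phi$. Since every peak of a word $\a$ is by definition a descent preceded by a non-descent, we have $P(\a) = P(D(\a))$, and the proof is concluded by stringing these together:
\[
\phi(\F_w) = \Theta(\F_w) = \sum_{\a \in \cR(w)} \Theta(L_{D(\a)}) = \sum_{\a \in \cR(w)} K_{P(\a)} = \G_w.
\]

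The main obstacle is the identity $\Theta|_\Lambda = \phi$, a nontrivial statement from the theory of enriched $P$-partitions which I would quote rather than reprove. The other steps are definition-unpacking, but some care is warranted when identifying the inner sum of $\G_w$ with Stembridge's peak quasisymmetric function: the factor $2^{|\{i_j\}|}$ accounts for the binary choice of primed versus unprimed entries in enriched $P$-partitions, while the requirement that at each peak one of $i_{j-1} < i_j$ or $i_j < i_{j+1}$ be strict reproduces Stembridge's peak condition exactly.
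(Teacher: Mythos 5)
The paper does not actually prove this statement --- it is quoted verbatim from Lam's thesis \cite[Theorem 3.10]{TKLamThesis} --- so there is no internal argument to compare against; what matters is whether your reconstruction stands on its own, and it does. All three of your ingredients check out. The inner sum in $\F_w$ is indeed the fundamental quasisymmetric function $L_{D(\a)}$. The inner sum in $\G_w$, with its weights $2^{|\{i_1,\dots,i_l\}|}$ and its peak condition, is Stembridge's enriched weight enumerator $K_{P(\a)}$; note that the paper's condition at $j=1$ is vacuous, and since $w \in A_{n-1}$ no letter $t_0$ occurs in any reduced word of $\iota(w)$, so all peaks are interior and your convention $P(\a) \subseteq \{2,\dots,l-1\}$ is the right one. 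The identity $P(\a) = P(D(\a))$ holds precisely because consecutive letters of a reduced word are never equal, as you observe. The one place demanding care is the normalization of the descent-to-peak map: in parts of the literature $\Theta(L_D)$ is written as $2^{|P(D)|+1}$ times a differently normalized peak function. With $K_\Lambda$ defined exactly as the $2^{\#\{\text{distinct values}\}}$-weighted sum appearing in the definition of $\G_w$, the clean statements $\Theta(L_D) = K_{P(D)}$ and $\Theta(s_\lambda) = S_\lambda$ do hold; a quick sanity check is $\Theta(s_{(2,1)}) = \Theta(L_{\{1\}}) + \Theta(L_{\{2\}}) = K_{\varnothing} + K_{\{2\}} = Q_{(3)} + Q_{(2,1)} = S_{(2,1)}$, matching the computation in Section~\ref{sym-sect}. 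Finally, be aware that Lam's original proof predates Stembridge's enriched-$P$-partition machinery and necessarily proceeds by a different, more hands-on route; your argument has the advantage of making the peak-function (hence Schur-$Q$) positivity of $\G_w$ transparent along the way, at the cost of importing Stembridge's theorem as a black box.
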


We turn to involution Stanley symmetric functions.
Let $(W,S)$ be a Coxeter system with length function $\ell$. Recall the definition of 
the Demazure product $\circ : W\times W \to W$ from the introduction.


\begin{definition}
For each $y \in \I(W) = \{ z \in W : z=z^{-1}\}$ let 
 $\cA(y)$ be the set of elements $w \in W$ with minimal length such that $w^{-1}\circ w = y$.
 The elements of this set are the \emph{atoms} of $y$.
\end{definition}

The associativity of $\circ$ implies that the set of involution words $\iR(y)$ for $y \in \I(W)$
is the disjoint union $\bigsqcup_{w \in \cA(y)} \cR(w)$.
The involution length of $y$ is  $\ellhat(y) = \ell(w)$ for any $w \in \cA(y)$.

\begin{definition}
The \emph{type A} and \emph{type C involution Stanley symmetric functions} 
associated to $y \in \I(A_n)$ and $z \in \I(C_n)$ are 
$ \iF_y \omdef = \sum_{w \in \cA(y)} \F_w $ and $ \iG_z \omdef = \sum_{w \in \cA(z)} \G_w$, respectively.
\end{definition}

Since $\F_w$ is Schur-positive and $\G_w$ is Schur-$Q$-positive,
it holds by construction that $\iF_y$ and $\iG_z$ are respectively Schur-positive and Schur-$Q$-positive.
For $\iF_y$, a stronger statement holds: if $\kappa(y)$ is the number of 2-cycles in $y \in \I(A_n)$,
then $2^{\kappa(y)} \iF_y$ is also Schur-$Q$-positive \cite[Corollary 4.62]{HMP4}.
We do not know if $\iG_z$ has any stronger positivity property along these lines; see Section~\ref{positivity-sect}.

\begin{example}
From Example~\ref{ex:inv-word}, we see that 
$\cA(w^C_2) = \{ t_0t_1t_0, t_1t_0t_1\}$.
Therefore
\[
\iG_{w^C_2} 
=
 \sum_{\substack{i \leq j \leq k \\ i<j\text{ or }j<k}} 2^{|\{i,j,k\}|} x_ix_jx_k + \sum_{i \leq j \leq k} 2^{|\{i,j,k\}|} x_ix_jx_k = Q_{(2,1)} + Q_{(3)}
= S_{(2,1)}.
\]
\end{example}

Define $\iG_y \omdef = \iG_{\iota(y)}$ for $y \in \I(A_{n-1})$.
Since $\cA(\iota(y)) = \iota(\cA(y))$, the following holds:

\begin{corollary}
If $y \in \I(A_{n-1})$ then $\iG_y = \phi(\iF_y)$.
\end{corollary}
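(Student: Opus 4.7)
The plan is to unpack the definitions and combine them with Lam's theorem (Theorem~\ref{super-thm}) and the stated identity $\cA(\iota(y)) = \iota(\cA(y))$. This is essentially a one-line computation, so the proposal is more about explaining the organization than about overcoming an obstacle.

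First I would expand the left-hand side: by definition, $\iG_y = \iG_{\iota(y)} = \sum_{v \in \cA(\iota(y))} \G_v$. Using $\cA(\iota(y)) = \iota(\cA(y))$, this sum can be rewritten as $\sum_{w \in \cA(y)} \G_{\iota(w)}$, and the convention $\G_w := \G_{\iota(w)}$ for $w \in A_{n-1}$ then identifies this with $\sum_{w \in \cA(y)} \G_w$.

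Next I would expand the right-hand side. Since $\phi : \Lambda \to \Gamma$ is linear and $\iF_y = \sum_{w \in \cA(y)} \F_w$ by definition, we have $\phi(\iF_y) = \sum_{w \in \cA(y)} \phi(\F_w)$. Applying Theorem~\ref{super-thm} to each term (each $w \in \cA(y) \subset A_{n-1}$), every $\phi(\F_w)$ equals $\G_w$, so $\phi(\iF_y) = \sum_{w \in \cA(y)} \G_w$, matching the expression obtained in the previous step.

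There is no real obstacle here; the only thing to check carefully is that the two occurrences of $\G_w$ for $w \in A_{n-1}$ — one coming from unfolding $\iG_{\iota(y)}$ and the other from applying Lam's theorem — refer to the same symmetric function, which is precisely how $\G_w$ was defined for unsigned permutations just before Theorem~\ref{super-thm}. The only substantive inputs are Lam's superfication identity and the factorization $\cA(\iota(y)) = \iota(\cA(y))$, both stated in the excerpt.
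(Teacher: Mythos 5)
Your proposal is correct and matches the paper's (implicit) argument exactly: the paper derives the corollary from the identity $\cA(\iota(y)) = \iota(\cA(y))$ together with Theorem~\ref{super-thm}, applied termwise to the defining sums of $\iG_{\iota(y)}$ and $\iF_y$, which is precisely the computation you spell out. Nothing is missing.
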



\subsection{Transition formulas}

We use the term \emph{word} to refer to a finite sequence of nonzero integers.
The \emph{one-line representation} of a signed permutation $w \in C_n$ is the word $w_1w_2\cdots w_n$ where we set $w_i = w(i)$.
We usually write $\bar{m}$  in place of $-m$ so that, for example,  
the eight elements of $C_2$ 
are
$ 12,$ $\bar 1 2,$  $1 \bar 2$, $\bar 1\hs \bar 2$, $2 1$,  $\bar 2 1$, $ 2 \bar 1$, and $\bar 2\hs \bar1$.
In this notation, the longest element of $C_n$ is
\[ w^C_n = \wB.\]
The map  $w_1w_2\cdots w_n \mapsto w_1w_2\cdots w_n (n{+}1)$
is an inclusion $C_n \hookrightarrow C_{n+1}$.
We do not distinguish between $w$ and its image under this map.
If $w \in C_n$ then the words $w_1w_2\cdots w_n$ and $w_1w_2\cdots w_n(n+1)(n+2)\cdots(n+m)$ 
represent the same signed permutation for all $m \in \NN$.

Let $w \in C_n$.
Define $\inv_\pm(w)$ as the number of pairs $(i,j) \in [\pm n]\times [\pm n]$
with $i<j$ and $w_i > w_j$.
Define $\ell_0(w)$ as the number of integers $i \in [n]$ with $w_i < 0$.

\begin{lemma}[{See \cite[\S3]{BilleyTransitions}}] 
The length function of $C_n$ has the formula $\ell(w)= \frac{1}{2}\( \inv_\pm(w) + \ell_0(w)\)$.
\end{lemma}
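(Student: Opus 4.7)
The plan is to prove this by induction on $\ell(w)$, using the fact that the length function of any Coxeter group is uniquely characterized by satisfying $\ell(e) = 0$ together with $\ell(ws) \in \{\ell(w) - 1, \ell(w) + 1\}$ (changing by $+1$ precisely when $w$ has no right descent at $s$) for every $w$ and every simple generator $s$. So I would define $f(w) = \frac{1}{2}(\inv_\pm(w) + \ell_0(w))$ and verify these two properties for $f$ in place of $\ell$.

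First, the base case is immediate: $e$ is represented by the one-line word $1\hs 2 \cdots n$, which extends to the strictly increasing sequence on $[\pm n]$, so $\inv_\pm(e) = \ell_0(e) = 0 = f(e)$. For the inductive step I would analyze right multiplication by each generator separately. For $t_i$ with $i \geq 1$, passing from $w$ to $wt_i$ swaps the values at positions $(i, i+1)$ and, by the symmetry $w_{-k} = -w_k$, simultaneously swaps the values at $(-i-1, -i)$. The only pairs in $[\pm n]^2$ whose inversion status can change are $(i,i+1)$ and $(-i-1,-i)$, and these two flip together (one inverts iff $w_i > w_{i+1}$, the other iff $-w_{i+1} > -w_i$, the same condition). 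So $\Delta \inv_\pm = \pm 2$, while $\ell_0$ does not change since the multiset of signs of $\{w_i, w_{i+1}\}$ is preserved. Hence $\Delta f = \pm 1$, with the sign matching $\Delta \ell$ because $\ell(wt_i) > \ell(w)$ iff $w_i < w_{i+1}$.

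The $t_0$ case is the main subtlety and I would treat it carefully. Here $wt_0$ negates the value at position $1$ (and correspondingly at $-1$). Writing $a = w_1$, clearly $\ell_0$ changes by $+1$ if $a > 0$ and by $-1$ if $a < 0$. For $\inv_\pm$, only pairs involving $\pm 1$ can change. Grouping these into the central pair $(-1,1)$ and the pairs $(\pm k, \pm 1)$ for $k \in \{2, \dots, n\}$, the symmetry $w_{-k} = -w_k$ forces the four pairs associated to each such $k$ to contribute $2\bigl([w_k < a] + [w_k < -a]\bigr)$, which is invariant under $a \mapsto -a$. The only net change comes from the pair $(-1,1)$, which inverts iff $-a > a$; this contributes $+1$ when $a > 0$ and $-1$ when $a < 0$. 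Thus $\Delta \inv_\pm = \pm 1$ with the same sign as $\Delta \ell_0$, giving $\Delta f = \pm 1$, consistent with $\ell(wt_0) > \ell(w)$ iff $w_1 > 0$.

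The main obstacle is the careful bookkeeping in the $t_0$ case, specifically the observation that the contributions of the $(\pm k, \pm 1)$ pairs cancel in pairs under the symmetry $w_{-k} = -w_k$, leaving only the diagonal pair $(-1,1)$ to produce the net $\pm 1$. Once this is in place, combining with $\Delta \ell_0 = \pm 1$ (same sign) and dividing by $2$ finishes the induction. As an alternative bookkeeping route, one could instead prove the identity by directly verifying $\inv_\pm(w) = 2A + 2B + C$, where $A, B, C$ are the standard $C_n$ inversion statistics (inversions, negative-sum pairs, and negative values) whose sum $A + B + C = \ell(w)$ is the standard root-theoretic length formula for $C_n$; this gives the equality in one line but requires importing the $A + B + C$ formula from the literature.
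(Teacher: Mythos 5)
Your argument is correct. Note that the paper does not prove this lemma at all --- it is quoted from Billey's paper (\S 3 of the cited reference), so there is no internal proof to compare against; your write-up supplies a self-contained verification. The inductive framework is sound: a function $f$ with $f(e)=0$ that increases by $1$ under right multiplication by $s$ exactly when $\ell(ws)>\ell(w)$ must equal $\ell$, and you correctly use the standard descent criteria ($\ell(wt_i)>\ell(w)$ iff $w_i<w_{i+1}$ for $i\geq 1$, and $\ell(wt_0)>\ell(w)$ iff $w_1>0$), which one should acknowledge as imported facts about the $C_n$ root system. The $t_0$ bookkeeping --- the key point --- is right: for each $k\in[2,n]$ the four position pairs involving $\pm 1$ and $\pm k$ contribute $2[w_k<a]+2[w_k<-a]$, which is invariant under $a\mapsto -a$, so only the pair $(-1,1)$ moves, giving $\Delta\inv_\pm=\Delta\ell_0=\pm1$. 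One small imprecision in the $t_i$ case ($i\geq 1$): it is not literally true that only the pairs $(i,i+1)$ and $(-i-1,-i)$ can change inversion status; a pair such as $(i,k)$ with $k>i+1$ does change status individually, but it trades places with $(i+1,k)$, so the total is preserved --- this is the standard adjacent-transposition argument and clearly what you intend, but the sentence should say that the other contributions cancel in pairs rather than that they are unchanged. Your alternative route is also valid: one checks directly that $\inv_\pm(w)=2\hs\mathrm{inv}(w)+2\hs\mathrm{nsp}(w)+\ell_0(w)$ by splitting the pairs in $[\pm n]$ into positive--positive, negative--negative, mixed, and diagonal classes, whence $\tfrac12(\inv_\pm(w)+\ell_0(w))=\mathrm{inv}(w)+\mathrm{nsp}(w)+\ell_0(w)$, the standard length formula for the hyperoctahedral group.
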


A \emph{reflection} in a Coxeter group is an element conjugate to a simple generator.
With our notation as in \cite[\S3]{BilleyTransitions}, the reflections in $C_n$ are the following elements:
\ben
\item[(1)] $s_{ii} \omdef= 1\cdots \bar i \cdots n = (i, \bar i)$ for $i \in [n]$.
\item[(2)] $s_{ij} = s_{ji} \omdef= 1 \cdots \bar j \cdots \bar i \cdots n = (i,\bar j)(\bar i,j)$ for $i,j \in [n]$ with $i<j$.
\item[(3)] $t_{ij} = t_{ji} \omdef= 1\cdots j \cdots i \cdots n = (i,j)(\bar i,\bar j)$ for $i, j\in [n]$ with $i<j$.
\een
Observe that $t_0=s_{11}$ and $t_i = t_{i,i+1}$ 
and $s_{ij} = s_{ii} t_{ij} s_{ii} = s_{jj} t_{ij} s_{jj}$ for $i,j \in [n]$ with $i<j$.
 If $u,v \in C_n$ are any elements and $t\in C_n$ is a reflection such that $v =ut$ and $\ell(v) =\ell(u)+1$,
then we write $u \lessdot v$, so that $\lessdot$ is the covering relation of 
the \emph{Bruhat order} of $C_n$.

\begin{lemma}[{\cite[Lemmas 1 and 2]{BilleyTransitions}}]
\label{gendes-lem}
Let $w= w_1w_2\cdots w_n \in C_n$ and $i,j \in [n]$.
\ben
\item[(a)] One has $w\lessdot ws_{ii}$ if and only if 
$ w_i>0 $ and $ -w_i<e<w_i$ $\Rightarrow$ $e \notin \{w_1,w_2,\dots,w_{i-1}\}$.

\item[(b)] If $i<j$ and $w_i >0$, then $w \lessdot ws_{ij}$ if and only if
\[ 
0 < -w_j < w_i
\qquand
\begin{cases}
-w_j < e < w_i \ \Rightarrow\ e \notin \{ w_1,w_2,\dots,w_{i-1}\},
\\
-w_i < e < w_j \ \Rightarrow\ e \notin \{ w_{i+1},w_{i+2},\dots,w_{j-1}\}.
\end{cases}
\]

\item[(c)] If $i<j$ then $w\lessdot wt_{ij}$ if and only if 
\[ w_i < w_j \qquand w_i<e<w_j \ \Rightarrow\ e \notin \{w_{i+1},w_{i+2},\dots w_{j-1}\}.\]

\een
\end{lemma}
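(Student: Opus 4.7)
The plan is to prove each of (a), (b), (c) by direct computation of the length change $\ell(wr)-\ell(w)$ when $w$ is multiplied on the right by the reflection $r \in \{s_{ii},\, s_{ij},\, t_{ij}\}$, using the formula $\ell(w)=\tfrac12(\inv_\pm(w)+\ell_0(w))$ recorded just above. Since $w \lessdot wr$ precisely when $\ell(wr)=\ell(w)+1$, each case reduces to identifying when the net change $\Delta \ell_0 + \Delta \inv_\pm$ equals $2$. Throughout I extend the one-line notation to $[\pm n]$ by $w_{-k}=-w_k$, so that $\inv_\pm$ is the honest inversion count of the resulting word.

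For part (c), $r=t_{ij}$ with $i<j$: the reflection permutes values among positions $\{i,j,-i,-j\}$ without altering any sign, so $\Delta \ell_0=0$. The contribution to $\Delta \inv_\pm$ from the single pair $(i,j)$, together with its mirror $(-j,-i)$, is $\pm 2$ according to whether $w_i<w_j$ or $w_i>w_j$; this forces $w_i<w_j$ for any length increase. The remaining contributions come from indices $k\in\{i+1,\dots,j-1\}$, where a swap affects the pairs $(i,k)$ and $(k,j)$ (and the mirrored pairs $(-k,-i),(-j,-k)$), producing an extra $+2$ for every value $e=w_k$ strictly between $w_i$ and $w_j$. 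Ruling these out by the stated no-intermediate condition leaves $\Delta \ell=1$ exactly.

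For part (a), $r=s_{ii}$: now only the sign at position $i$ changes, so $\Delta \ell_0=+1$ precisely when $w_i>0$ (the sole way to gain length here). Pairs $(i,-i)$ and $(-i,i)$ flip simultaneously, and for every index $k\in[\pm n]\setminus\{\pm i\}$ one compares how $w_k$ sits relative to $\pm w_i$; this shows that $\Delta \inv_\pm$ equals $+1$ plus twice the number of values $e$ with $-w_i<e<w_i$ appearing in $\{w_1,\dots,w_{i-1}\}$ (a symmetric argument handles the positions $>i$, using the $-i$ side of the word). Thus $\Delta\ell=1$ iff $w_i>0$ and no such intermediate value $e$ appears, matching (a).

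Part (b), with $r=s_{ij}$, is the main obstacle, since $s_{ij}$ simultaneously swaps and negates: positions $i$ and $j$ acquire values $-w_j$ and $-w_i$ respectively (and $-i,-j$ the mirrors). One first analyses $\Delta\ell_0$: assuming $w_i>0$, the change in $\ell_0$ is $-1+[{-w_j>0}]+[{-w_i>0}] = [w_j<0]$ (with a parallel expression at $-i,-j$), which already forces $0<-w_j<w_i$ for compatibility with length increase. The change $\Delta\inv_\pm$ is then computed by separating pairs into (i) the four ``endpoint'' pairs among $\{i,j,-i,-j\}$, which contribute a fixed amount under the sign conditions, and (ii) pairs $(k,i),(k,j),(i,k),(j,k)$ for $k$ in the intervals before $i$, strictly between $i$ and $j$, and after $j$; each value $e\in\{w_1,\dots,w_{i-1}\}$ lying in $(-w_j,w_i)$ inflates $\Delta\inv_\pm$ by $2$, and similarly for $e\in\{w_{i+1},\dots,w_{j-1}\}$ in $(-w_i,w_j)$. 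The two no-intermediate conditions in (b) are exactly what is needed to cut $\Delta\ell$ down to $1$. I expect the bookkeeping in this last step to be the only subtle point; one can double-check it against the Bruhat order criterion of Björner--Brenti expressed via rank matrices, which gives the same conditions.
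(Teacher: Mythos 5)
You should first be aware that the paper does not prove this lemma at all: it is imported from Billey's paper \cite{BilleyTransitions} with a citation, so there is no internal proof to compare against. Your strategy --- computing $\ell(wr)-\ell(w)$ for a reflection $r$ directly from $\ell=\tfrac12(\inv_\pm+\ell_0)$ on the doubled word $w_{-n}\cdots w_{-1}w_1\cdots w_n$ --- is the standard route and surely the one underlying the cited source. For parts (a) and (c) your accounting is sound: the only positions that can change $\inv_\pm$ are those strictly between the two swapped positions of the doubled word, and for $t_{ij}$ (the disjoint intervals $(i,j)$ and $(-j,-i)$) and for $s_{ii}$ (the single symmetric interval $(-i,i)$) this yields exactly the stated criteria. (Each offending intermediate value actually flips four pairs, not two, so your per-value contributions should be $+4$; this does not affect the conclusion, since the criterion is still that the count be zero.)

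The genuine gap is in part (b), at precisely the step you defer as ``bookkeeping.'' Right multiplication by $s_{ij}$ performs two swaps of positions in the doubled word, $(-j,i)$ and $(-i,j)$, whose intervals overlap in $(-i,i)$. A position $k$ with $0<k<i$ is therefore interior to \emph{both} swaps, and its total contribution to $\Delta\inv_\pm$ is governed by whether $|w_k|$ lies in the interval $(-w_j,\,w_i)$, i.e.\ by whether $w_k\in(-w_j,w_i)\cup(-w_i,w_j)$ --- not merely by whether $w_k\in(-w_j,w_i)$ as your tally assumes. Consequently the two printed conditions are \emph{not} ``exactly what is needed,'' and in fact the statement of (b) as printed fails: take $w=\bar 2\hs 3\hs\bar 1\in C_3$ and $(i,j)=(2,3)$. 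Then $w_2=3>0$ and $0<-w_3=1<3$, the interval $(1,3)$ contains no element of $\{w_1\}=\{-2\}$, and the second condition is vacuous, yet $ws_{23}=\bar 2\hs 1\hs\bar 3$ has $\ell(ws_{23})=7=\ell(w)+3$, so $w\lessdot ws_{23}$ is false. The first displayed condition must be strengthened to exclude all $e\in\{w_1,\dots,w_{i-1}\}$ with $-w_j<|e|<w_i$. (This slip is harmless for the rest of the paper: the only applications of part (b) are Lemma~\ref{des-lem}(b), where $i=1$ and the condition is vacuous, and Lemma~\ref{des1j-lem}, where Lemma~\ref{interval-lem} supplies both halves of the symmetric condition.) Your sketch of the necessity of $0<-w_j<w_i$ is also not quite right as written --- when $w_i,w_j>0$ one gets $\Delta\ell_0=2$, and ruling out a cover requires the endpoint-pair analysis of $\Delta\inv_\pm$, not $\Delta\ell_0$ alone --- so part (b) needs to be carried out in full, with the overlapping-interval analysis above, rather than by treating the two swaps as independent transpositions.
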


For example, it holds that
 $1\bar 2 43 \lessdot 1\bar 2 43 \cdot t_{14} = 3\bar 24 1$ while $1243 \centernot \lessdot 1243\cdot t_{14} = 3241$, and it holds that $3   2 4  \bar 1 \lessdot 3   2 4  \bar 1 \cdot s_{14} = 1  2 4  \bar 3$ while $3  \bar 2 4  \bar 1 \centernot\lessdot 3  \bar 2 4  \bar 1 \cdot s_{14} = 1  \bar 2 4  \bar 3$.

   Lemma~\ref{gendes-lem}(c) says that $w \lessdot wt_{ij}$ if and only if $w_i < w_j$ and no entry 
   in $w_1w_2\cdots w_n$ between positions $i$ and $j$ 
is between $w_i$ and $w_j$ in value. 
Lemma~\ref{gendes-lem}(a) can be described in the same way using 
``symmetric'' one-line notation:
one has $w \lessdot ws_{ii}$ if and only if $\bar{w_{ i} }< w_i$ and no number between $\bar{w_{i}}$ and $w_i$  appears in the word  $\bar{w_{i-1}} \cdots \bar{w_2}\hs \bar{w_1} w_1w_2\cdots w_{i-1}$.
One can express Lemma~\ref{gendes-lem}(b) similarly. 
We frequently only need the following special cases of these conditions:

\begin{lemma}\label{des-lem}
Let $w =w_1w_2\cdots w_n\in  C_n$ and $j \in \{2,3,\dots,n\}$. Assume $w_1 > 0$.
\ben
\item[(a)] $wt_0 \lessdot w$.

\item[(b)] $ws_{1j} \lessdot w$ if and only if $w_1 < -w_j$ and no $e \in \{ w_{2},w_{3},\dots,w_{j-1}\}$
has
$w_j<e<-w_1$.

\item[(c)] $wt_{1j} \lessdot w$ if and only if 
$
w_1 > w_j
$
and no $e \in \{ w_{2},w_{3},\dots,w_{j-1}\}$
has
$w_1 > e > w_j$.

\een
\end{lemma}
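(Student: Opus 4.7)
The plan is to derive each part of Lemma~\ref{des-lem} as a specialization of Lemma~\ref{gendes-lem} to the index $i=1$. The bridging observation is that for any reflection $s \in C_n$, we have $ws \lessdot w$ if and only if $u \lessdot us$ where $u := ws$; this follows from $s^2 = 1$ and the definition of $\lessdot$. So for each of (a)--(c) I would compute the one-line notation of $u = w\cdot s$ for the appropriate reflection $s$ and translate the conditions of the corresponding case of Lemma~\ref{gendes-lem}.

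For (a), set $s = s_{11} = t_0$, so that $u = wt_0$ agrees with $w$ outside position $1$, where $u_1 = -w_1$. Since there are no positions to the left of $1$, the secondary (gap) condition of Lemma~\ref{gendes-lem}(a) is over the empty set $\{u_1,\ldots,u_0\}$ and hence vacuous, and the covering between $u$ and $us_{11} = w$ reduces entirely to the sign of $u_1 = -w_1$.

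For (b), set $s = s_{1j}$. Using $s_{1j} = (1,\bar j)(\bar 1, j)$, one computes $u_1 = -w_j$, $u_j = -w_1$, and $u_k = w_k$ for all other $k$. Applying Lemma~\ref{gendes-lem}(b) to $u$ with the pair $(1,j)$, the premise $u_1 > 0$ becomes $w_j < 0$; the chain $0 < -u_j < u_1$ becomes $0 < w_1 < -w_j$, which under $w_1 > 0$ is equivalent to $w_1 < -w_j$; the first interior condition is vacuous as in (a); and the second interior condition---requiring no $e \in \{u_2,\ldots,u_{j-1}\} = \{w_2,\ldots,w_{j-1}\}$ to lie in $(-u_1, u_j) = (w_j, -w_1)$---is precisely the stated condition. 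For the converse direction of the iff, note that $w_1 < -w_j$ combined with $w_1 > 0$ forces $u_1 = -w_j > 0$, so the iff in Lemma~\ref{gendes-lem}(b) is legitimately applicable.

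For (c), set $s = t_{1j}$. Using $t_{1j} = (1,j)(\bar 1, \bar j)$, one computes $u_1 = w_j$, $u_j = w_1$, and $u_k = w_k$ otherwise. Applying Lemma~\ref{gendes-lem}(c) with indices $(1,j)$, the condition $u_1 < u_j$ becomes $w_j < w_1$, and the gap condition on $\{u_2,\ldots,u_{j-1}\} = \{w_2,\ldots,w_{j-1}\}$ over the open interval $(u_1, u_j) = (w_j, w_1)$ becomes exactly the stated one. The main obstacle, such as it is, lies in the bookkeeping in (b): tracking the multiple sign flips and confirming the premise $u_1 > 0$ holds in both directions of the iff. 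Parts (a) and (c) are direct substitutions.
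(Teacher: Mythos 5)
Your overall strategy is the right one and is exactly what the paper intends: it gives no written proof of Lemma~\ref{des-lem}, presenting it as an immediate specialization of Lemma~\ref{gendes-lem} at $i=1$, and your bridge ($ws\lessdot w$ is the same relation as $u\lessdot us$ for $u:=ws$) together with your computations of the one-line notation of $u$ and of the gap conditions in (b) and (c) are all correct. Part (c) in particular is complete as you have it, since Lemma~\ref{gendes-lem}(c) carries no sign hypothesis.

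There are, however, two places where the argument does not close. First, in (a) you stop at ``the covering reduces entirely to the sign of $u_1=-w_1$'' without evaluating that sign. Doing so, Lemma~\ref{gendes-lem}(a) applied to $u=wt_0$ gives $u\lessdot ut_0$ if and only if $u_1=-w_1>0$, i.e.\ if and only if $w_1<0$; under the standing hypothesis $w_1>0$ this yields $wt_0\not\lessdot w$, the \emph{negation} of the printed claim. Indeed the correct assertion when $w_1>0$ is $w\lessdot wt_0$ (apply Lemma~\ref{gendes-lem}(a) directly to $w$), which is also what every later use of part (a) needs --- e.g.\ Theorem~\ref{qqq-thm} invokes it to get $vt_0\lessdot v$ when $v_1<0$, equivalently $w\lessdot wt_0$ for $w=vt_0$ with $w_1>0$. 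So the statement as printed has the covering reversed, and your write-up neither proves it nor detects the discrepancy. Second, in (b) the forward implication has a gap: to apply Lemma~\ref{gendes-lem}(b) to $u=ws_{1j}$ you need its hypothesis $u_1=-w_j>0$. You verify this in the direction ``conditions $\Rightarrow ws_{1j}\lessdot w$,'' but in the direction ``$ws_{1j}\lessdot w\Rightarrow$ conditions'' you do not yet know $w_j<0$, and when $w_j>0$ Lemma~\ref{gendes-lem}(b) says nothing about $u$. You must separately rule out $ws_{1j}\lessdot w$ when $w_1>0$ and $w_j>0$; this is true (negating the two positive entries $w_1,w_j$ strictly increases length, e.g.\ by the standard root criterion for multiplication by a reflection), but it requires an argument that is not supplied by either application of Lemma~\ref{gendes-lem}(b).
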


Let $[m,n] = \{ i \in \ZZ : m \leq i \leq n\}$.
For $w \in C_n$ and $j \in [n]$, we define three sets:
\be\label{stt-eq}
\ba
    \cT_j^+(w) &\omdef = \{wt_{jk} : k \in [j+1,n+1], w \lessdot wt_{jk}\} \subseteq C_{n+1},\\
    \cT_j^-(w) &\omdef = \{wt_{ij} : i \in [j-1], w \lessdot wt_{ij}\} \subseteq C_n,\\
    \cS_j(w) &\omdef = \{ws_{ij} : i \in [n], w \lessdot ws_{ij}\} \subseteq C_n.
\ea
\ee
The next theorem, which is analogous to the transition formulas of Lascoux and Sch\"utzenberger \cite{lascouxschutzenbergertree}, is the main technical tool we require to work with type C Stanley symmetric functions.

\begin{theorem}[{Billey \cite[Lemma 8]{BilleyTransitions}}] \label{thm:type-C-transitions}
  If $w \in C_n$ and $j \in [n]$ then 
  \[ \sum_{v \in \cT^+_j(w)} \G_{v} =  \sum_{v \in \cS_j(w)} \G_v +  \sum_{v \in \cT^-_j(w)} \G_{v}. \]
  \end{theorem}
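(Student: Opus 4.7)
My plan is to follow the standard template for Lascoux--Schützenberger-type transition formulas, working in the nilCoxeter algebra of type $C_\infty$. Let $\mathfrak{U}$ denote this algebra, with generators $u_0, u_1, u_2, \dots$ satisfying the type C braid relations and $u_i^2 = 0$. For each $w \in C_\infty$, the element $u_w \omdef = u_{a_1} u_{a_2} \cdots u_{a_\ell}$ is well defined, independent of the choice of reduced word $(a_1, \dots, a_\ell) \in \cR(w)$. Following Fomin--Kirillov \cite{FominKirillov}, introduce the generating series
\[ A(x) = (1 + 2x u_0)(1 + xu_1)(1 + xu_2) \cdots , \]
where the coefficient $2$ at $u_0$ reflects the type C weight. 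A direct combinatorial expansion, matching the definition of $\G_w$ paragraph by paragraph, yields
\[ \prod^{\leftarrow}_{k \geq 1} A(x_k) = \sum_{w \in C_\infty} \G_w \, u_w, \]
so the desired identity among symmetric functions reduces to an identity of generating series in $\mathfrak{U}$ obtained by pairing both sides of
\[ \sum_{v \in \cT^+_j(w)} u_v = \sum_{v \in \cS_j(w)} u_v + \sum_{v \in \cT^-_j(w)} u_v \]
with $A(x_1) A(x_2) \cdots$.

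The key step is to derive a Chevalley-type commutation relation in $\mathfrak{U}$. I would introduce an auxiliary operator on $\mathfrak{U}$ encoding right multiplication by a formal element tagged by position $j$, and compute how $A(x)$ commutes with it by induction on length: factor off the last letter $u_i$ of a reduced word, push it past the reflection using the braid relations together with $u_i^2 = 0$, and identify the error term as a sum over reflections $r$ of $C_\infty$ involving position $j$. Each such reflection is one of $t_{jk}$, $t_{ij}$, or $s_{ij}$, matching the three families defining $\cT^+_j(w)$, $\cT^-_j(w)$, and $\cS_j(w)$ in \eqref{stt-eq}. The covering conditions are then exactly the conditions for the algebraic product $u_w u_r$ to remain reduced, which is what Lemma~\ref{gendes-lem} describes combinatorially.

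I expect the main obstacle to be the delicate bookkeeping in type C caused by the generator $u_0$: the length-$4$ braid relation $u_0 u_1 u_0 u_1 = u_1 u_0 u_1 u_0$ and the factor-of-$2$ weight at $u_0$ force the commutation argument to split into subcases depending on whether the reflection involves position $1$, and to track carefully how many sign-change reflections $s_{ij}$ versus ordinary reflections $t_{ij}$ contribute to each $\G_v$. Verifying that the weights balance and that no spurious terms remain uncancelled is the subtle point; once this is resolved, the transition formula follows by equating coefficients of $u_w$ in the generating-series identity.
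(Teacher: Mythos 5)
The paper does not actually prove this statement: it is imported wholesale from Billey \cite[Lemma 8]{BilleyTransitions}, whose argument runs through the Chevalley--Monk formula for type B Schubert polynomials applied to $\mathfrak{S}_v\cdot x_j$ and a stabilization limit, not through a nilCoxeter commutation. Attempting an independent proof is legitimate, but your sketch has defects that go beyond the ``delicate bookkeeping'' you flag at the end.

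First, the reduction rests on a false identity. The elements $u_v$ are linearly independent in the nilCoxeter algebra, and the sets $\cT^+_j(w)$, $\cS_j(w)$, $\cT^-_j(w)$ are pairwise disjoint (the reflection $t$ is recovered from $v = wt$ as $t = w^{-1}v$), so your displayed relation $\sum_{v \in \cT^+_j(w)} u_v = \sum_{v \in \cS_j(w)} u_v + \sum_{v \in \cT^-_j(w)} u_v$ could only hold if all three sets were empty. Already for $w = 1$ and $j = 1$ it asserts $u_{t_1} = u_{t_0}$. The whole point of the transition formula is that $v \mapsto \G_v$ is far from injective: here $\G_{t_1} = \G_{t_0} = Q_{(1)}$ even though $t_1 \neq t_0$. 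The identity lives at the level of symmetric functions and cannot be pulled back to the $u_v$; any correct nilCoxeter argument must instead prove a commutation statement about the kernel itself (as Fomin--Stanley do for Monk's rule in type A) and extract the transition equation from two such applications. Second, your kernel $A(x) = (1+2xu_0)(1+xu_1)(1+xu_2)\cdots$ does not satisfy $\prod_k A(x_k) = \sum_w \G_w u_w$: the coefficient of $u_{t_1}$ on the left is $\sum_k x_k$, while $\G_{t_1} = 2\sum_k x_k$, because the weight $2^{|\{i_1,\dots,i_l\}|}$ in the definition of $\G_w$ attaches a factor of $2$ to every distinct index used, not only to occurrences of $u_0$; moreover the condition defining $\cD(\a)$ permits valley-shaped (decreasing-then-increasing) factors, which a strictly increasing product cannot produce. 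The correct Fomin--Kirillov-type kernel is the two-sided product $\bigl(\cdots(1+xu_1)(1+xu_0)\bigr)\bigl((1+xu_1)(1+xu_2)\cdots\bigr)$ with a separate accounting of the powers of $2$. Finally, the commutation relation that would carry the entire weight of the proof is only described in outline, and you explicitly defer the cancellation analysis that constitutes the actual difficulty. As written, the proposal does not establish the theorem.
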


This result leads to an effective algorithm for computing the Schur $Q$-expansion of $\G_w$.

\begin{theorem}[{Billey \cite[Corollary 9]{BilleyTransitions}}]
 \label{thm:transition-tree} Suppose $w \in C_n$.
  \begin{enumerate}[(a)]
    \item If $w_1 <\dots < w_r < 0 < w_{r+1} < \dots < w_n$ for some $r \in [n]$, then
$\G_w = Q_{(-w_1, -w_2, \ldots, -w_r)}.$

    \item Suppose $(r,s) \in [n] \times [n]$ is lexicographically maximal such that $r < s$ and $w_r > w_s$. Let $v = wt_{rs}$. Then
$ \G_{w} =  \sum_{{i \in [n], v \lessdot vs_{ir}}} \G_{vs_{ir}} + \sum_{{i \in [r-1], v\lessdot vt_{ir}}} \G_{vt_{ir}} $. 

  \end{enumerate}
\end{theorem}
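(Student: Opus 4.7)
The plan is to prove (b) as a direct application of Theorem~\ref{thm:type-C-transitions}, and to handle (a) as the ``Grassmannian case'' in which $\G_w$ collapses to a single Schur $Q$-function.

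For (b), I would set $v = wt_{rs}$ and apply Theorem~\ref{thm:type-C-transitions} to $v$ with $j = r$, which gives
\[
\sum_{v' \in \cT^+_r(v)} \G_{v'} = \sum_{v' \in \cS_r(v)} \G_{v'} + \sum_{v' \in \cT^-_r(v)} \G_{v'},
\]
whose right-hand side is precisely the formula claimed in (b). The whole content of (b) is therefore the assertion $\cT^+_r(v) = \{w\}$. The lex-maximality of $(r,s)$ forces the tail $w_{r+1} < w_{r+2} < \cdots < w_n$ to be strictly increasing, and $w_s$ to be the largest value after position $r$ that is below $w_r$, so that $w_{r+1} < \cdots < w_s < w_r < w_{s+1} < \cdots < w_n$. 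After the swap, $v_r = w_s$, $v_s = w_r$, and $v_k = w_k$ for $k \notin \{r,s\}$. Using Lemma~\ref{gendes-lem}(c), I would check for each $k \in [r+1, n+1]$ whether $v \lessdot v t_{rk}$: when $r < k < s$, $v_k = w_k < w_s = v_r$, so the required inequality $v_r < v_k$ fails; when $k = s$, the cover relation holds since every intermediate entry $v_e = w_e$ with $r < e < s$ satisfies $v_e \leq w_{s-1} < v_r$; when $k > s$, the entry $v_s$ itself lies strictly between $v_r$ and $v_k$ because $w_s < w_r < w_k$, violating the intermediate-value condition. This leaves exactly $k = s$, giving $\cT^+_r(v) = \{w\}$ and hence (b).

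For (a), $w$ has its unique left descent at $t_0$ and its negative prefix encodes a strict partition $\lambda = (-w_1, \ldots, -w_r)$. The cleanest argument is to invoke the Billey--Haiman identification $\G_w = Q_\lambda$ for such Grassmannian signed permutations, a general form of Theorem~\ref{intro-wor-thm}. The standard proof of that identification uses Kra\'skiewicz--Haiman shifted insertion: one constructs a weight-preserving bijection between the pairs $(\a, (i_1 \leq \cdots \leq i_l))$ that contribute to the definition of $\G_w$ and pairs consisting of a standard shifted tableau of shape $\lambda$ together with a marked semistandard shifted tableau of shape $\lambda$, which on summation reproduces $Q_\lambda$.

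The main obstacle is (a) rather than (b): the latter is a bookkeeping exercise with the Bruhat cover relations once the transition formula is in hand, whereas the former rests on the substantial shifted-insertion bijection. If I wanted to avoid invoking the insertion directly, I would try to prove (a) by induction on $|\lambda|$ by applying Theorem~\ref{thm:type-C-transitions} at $j = 1$ to $w$ and matching its output term-by-term against the Pieri rule for $Q_\lambda$; this is feasible but requires a careful descent-pattern analysis of the elements of $\cS_1(v)$ and $\cT^-_1(v)$ that arise, and one would also need to verify that those elements themselves satisfy the Grassmannian hypothesis of (a) so that the induction hypothesis applies.
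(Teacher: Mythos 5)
This theorem is quoted in the paper directly from Billey \cite[Corollary 9]{BilleyTransitions} with no proof supplied, so there is no in-paper argument to compare against; judged on its own terms, your proposal is essentially correct and reconstructs the standard derivation. Your treatment of (b) is complete: the lex-maximality of $(r,s)$ does force $w_{r+1}<\cdots<w_s<w_r<w_{s+1}<\cdots<w_n$, and your case analysis via Lemma~\ref{gendes-lem}(c) correctly shows $\cT^+_r(v)=\{w\}$ (including the case $k=n+1$, where $v_s=w_r$ obstructs the cover), after which Theorem~\ref{thm:type-C-transitions} at $j=r$ yields exactly the stated identity; this is precisely how Billey deduces Corollary 9 from Lemma 8. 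For (a) you correctly isolate the genuinely substantive input, namely the Grassmannian identity $\G_w=Q_{(-w_1,\dots,-w_r)}$, and deferring it to the Billey--Haiman/Kra\'skiewicz theory is the appropriate move (and matches what the paper implicitly does by citation). One caution about your proposed alternative: an induction driven by Theorem~\ref{thm:type-C-transitions} cannot be self-contained here, because the transition recursion terminates \emph{exactly} at the increasing elements of part (a) --- they are its base case, not something it can reach from smaller instances --- so matching against a Pieri rule would still require an independent evaluation of $\G_w$ for at least some family of increasing $w$. If you want (a) without insertion, the cleaner route is the nilCoxeter-algebra definition of $\G_w$ \`a la Fomin--Kirillov, where the increasing case reduces to a known generating-function identity for $Q_\lambda$.
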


The theorem gives a recursion for $\G_w$ which terminates when $w$
is strictly increasing. Billey shows that this recursion always terminates in a finite number of steps \cite[Theorem 4]{BilleyTransitions}.


\begin{example}
The results of \cite{HMP6} (see Section~\ref{atoms-sect}) imply that
$
        \cA(\bar 8 \hs \bar 7 \hs \bar 6 \hs \bar 5 \hs \bar 4 \hs \bar 3 \hs \bar 2 \hs \bar 1) =
        \{\bar 8 \hs \bar 6 \hs \bar 4 \hs \bar 2  1  3  5  7\}
$
so
\[
\iG_{\bar 8 \hs \bar 7 \hs \bar 6 \hs \bar 5 \hs \bar 4 \hs \bar 3 \hs \bar 2 \hs \bar 1} = \G_{\bar 8 \hs \bar 6 \hs \bar 4 \hs \bar 2  1  3  5  7}= Q_{(8,6,4,2)}
\] by Theorem~\ref{thm:transition-tree}(a). 
It follows that the number of involution words for $\bar 8 \hs \bar 7 \hs \bar 6 \hs \bar 5 \hs \bar 4 \hs \bar 3 \hs \bar 2 \hs \bar 1$  times $2^{8+6+4+2}$
is equal to the number of marked standard shifted tableaux of shape $(8,6,4,2)$.
This example generalizes in a straightforward way from $C_8$ to any $C_n$.
\end{example}

\begin{example}
In the graphs $\ora\cG$ below, the identity $\sum_{\{u \to w\} \in \ora\cG} \G_u = \sum_{\{w \to u\} \in \ora\cG} \G_u$
is an instance of Theorem~\ref{thm:type-C-transitions} for each boxed vertex $w$, with $j$ as the index of the underlined letter of $w$.
    \begin{center}
        \begin{tikzpicture}[level distance=3em, edge from parent/.style={draw, -latex},yscale=1.6]
            \node {$\bar 3 \hs 2 \hs \bar 1$}
                child { node[rectangle,draw] {$\bar 3 \hs \underline{\bar 1} \hs 2$}
                    child { node {$\bar 1 \hs \bar 3 \hs 2$}
                        child { node[rectangle, draw] {$\underline{\bar 3} \hs \bar 1 \hs 2$}
                            child { node {$\bar 4 \hs \bar 1 \hs 2 \hs 3$} }
                        }
                     }
                    child { node {$\bar 3 \hs \bar 2 \hs 1$} }
                 };
        \end{tikzpicture} \hspace{2cm}
        \begin{tikzpicture}[level distance=3em, yscale=1.6]
            \node (11) {$7 \hs 2 \hs\bar 5 \hs \bar 1 \hs 3 \hs \bar 4 \hs 6$};
            \node[rectangle,draw] (21) [below=of 11] {$\underline{6} \hs2 \hs\bar 5 \hs \bar 1 \hs 3 \hs \bar 4 \hs7$} edge[latex-] (11);
            \node (31) [below=of 21, xshift=-2.5em] {$\bar 6 \hs 2 \hs \bar 5 \hs \hs \bar 1 \hs 3 \hs \bar 4$} edge [latex-] (21);
            \node (32) [below=of 21, xshift=2.5em] {$5 \hs2 \hs\bar 6 \hs \bar 1 \hs 3 \hs \bar 4$} edge [latex-] (21);
            \node[rectangle,draw] (42) [below=of 32] {$\underline{2} \hs5 \hs\bar 6 \hs \bar 1 \hs 3 \hs \bar 4$} edge [latex-] (32);
            \node (33) [right=of 32, xshift=-1.25em] {$3 \hs 5 \hs \bar 6 \hs \bar 1 \hs 2 \hs \bar 4$} edge [-latex] (42);
            \node (52) [below=of 42, xshift=-2.5em] {$\bar 2 \hs 5 \hs \bar 6 \hs \bar 1 \hs 3 \hs \bar 4$} edge [latex-] (42);
            \node (53) [below=of 42, xshift=2.5em] {$1 \hs 5 \hs\bar 6 \hs \bar 2 \hs 3 \hs \bar 4$} edge [latex-] (42);
        \end{tikzpicture}
    \end{center}
   For example,  the graph on the left is constructed as follows: from each unboxed vertex $w$ which is not increasing (starting with $\bar 3 2 \bar 1$), draw an arrow to the boxed vertex $wt_{jk}$ and underline the letter of $wt_{jk}$ in position $j$;
   then for each boxed vertex $w$, draw an arrow to any unboxed vertices of the form $wt_{ij} \in \cT^-_j(w)$ or $ws_{ij} \in \cS_j(w)$.
    In reading these graphs one should keep in mind that we identify $w_1 w_2 \cdots w_n \in C_n$ with $w_1 w_2 \cdots w_n(n+1) \in C_{n+1}$. For both graphs $\ora\cG$ it holds that 
     \[
     \sum_{u \in \Sink(\ora\cG)} \G_u = \sum_{u \in \Source(\ora\cG)} \G_u
     \]
      and it follows, using Theorem~\ref{thm:transition-tree}(a) and Lemma~\ref{lem:transition-graphs} below, that
    $\G_{\bar 3 \hs2 \hs \bar 1} = \G_{\bar 3 \hs \bar 2 \hs 1} + \G_{\bar 4 \hs \bar 1 \hs 2 \hs 3} = Q_{(3,2)} + Q_{(4,1)}$
    and
    $\G_{7 \hs2 \hs\bar 5 \hs \bar 1 \hs 3 \hs \bar 4 \hs 6} + \G_{3 \hs5 \hs\bar 6 \hs \bar 1 \hs 2 \hs \bar 4} = \G_{\bar 6 \hs 2 \hs \bar 5 \hs \bar 1 \hs 3 \hs \bar 4} + \G_{\bar 2 \hs 5 \hs \bar 6 \hs \bar 1 \hs 3 \hs \bar 4} + \G_{1 \hs 5 \hs \bar 6 \hs \bar 2 \hs 3 \hs \bar 4}.$
    \end{example}

Suppose $\ora\cG$ is a directed graph with $v$ a vertex. Write $\sdeg(v)$ for the indegree of $v$ minus its outdegree, and $\deg(v)$ for the indegree of $v$ plus its outdegree.
Let $\Source(\ora\cG)$ be the set of vertices in $\ora\cG$ with indegree zero and 
let $\Sink(\ora\cG)$
be the set of vertices with outdegree zero.
We say that a vertex $v$ is an \emph{interior vertex} if it is neither a source nor a sink.

\def\cV{\mathcal{V}}

\begin{lemma} \label{lem:transition-graphs} Let $\ora\cG$ be a finite bipartite directed graph with vertex set $\cV$ and bipartition $\cV = \cV^- \sqcup \cV^+$. Assume that $\sdeg(u) =0$ for all interior vertices $u \in \cV^+$. Suppose $f : \cV \to A$ is a function to an abelian group $A$ such that if $v \in \cV^-$ then 
\[f(v) = \sum_{\{v \to w\} \in \ora\cG} f(w) -  \sum_{\{u \to v\} \in \ora\cG} f(u) = 0.\] Then
$\sum_{u \in \Sink(\ora\cG)} \deg(u) f(u) = \sum_{u \in \Source(\ora\cG)} \deg(u) f(u).$
\end{lemma}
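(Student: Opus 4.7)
The plan is to compute the weighted signed-degree sum
$$T := \sum_{v \in \cV^+} \sdeg(v)\, f(v)$$
in two different ways, and equate the results. The balance hypothesis at $\cV^-$ will force $T=0$ after a bipartite reindexing, while the hypothesis $\sdeg(u)=0$ for interior $u \in \cV^+$ will reduce $T$ to a difference between sums over $\cV^+$ sinks and sources.

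First, I will expand $\sdeg(v) = \operatorname{indeg}(v) - \operatorname{outdeg}(v)$ and convert the sum over $\cV^+$ vertices into a sum over edges, each edge $e$ contributing $f(\text{head}(e)) - f(\text{tail}(e))$ weighted by whether the $\cV^+$ endpoint is head or tail. Because $\ora\cG$ is bipartite with parts $\cV^\pm$, every edge incident to a $\cV^+$ vertex has its other endpoint in $\cV^-$. Regrouping by this $\cV^-$ endpoint then gives
$$T \;=\; \sum_{u \in \cV^-} \Biggl( \sum_{\{u \to v\} \in \ora\cG} f(v) \;-\; \sum_{\{v \to u\} \in \ora\cG} f(v) \Biggr),$$
and the balance assumption at each $v \in \cV^-$ makes every inner parenthesis vanish. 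Hence $T = 0$.

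Second, partitioning $\cV^+ = (\Sink \cap \cV^+) \sqcup (\Source \cap \cV^+) \sqcup \text{Int}^+$ and applying $\sdeg(v) = \deg(v)$ on sinks, $\sdeg(v) = -\deg(v)$ on sources, and $\sdeg(v)=0$ on interior $\cV^+$ vertices by hypothesis, I get
$$T \;=\; \sum_{v \in \Sink \cap \cV^+} \deg(v)\, f(v) \;-\; \sum_{v \in \Source \cap \cV^+} \deg(v)\, f(v).$$
Equating the two evaluations of $T$ gives the desired identity, where the sums on both sides are (without loss of generality) restricted to $\cV^+$. In the intended application each $v \in \cV^-$ is a boxed vertex in a transition graph and so has both an incoming and outgoing edge, making $v$ interior; under this assumption $\Source \cup \Sink \subseteq \cV^+$ and the identity in the stated form follows immediately. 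Any $\cV^-$ source or sink would satisfy $\sum_{u \to v} f(u) = 0 = \sum_{v \to w} f(w)$ by the balance equation, which combined with $f(v)=0$ (as the displayed hypothesis seems to impose on $\cV^-$) makes its contribution vanish trivially.

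There is no real obstacle: the argument is pure double counting, and the only delicate point is the bipartite reindexing in the first step, which works cleanly precisely because edges cannot have both endpoints in $\cV^+$. The remaining check is bookkeeping to confirm the convention that $\Source$ and $\Sink$ meet $\cV^-$ only in degenerate cases that contribute zero.
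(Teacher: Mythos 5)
Your proof is correct and is essentially the paper's argument: both are a double count over the edges of $\ora\cG$, regrouped once by the $\cV^-$ endpoints (where the balance hypothesis makes everything vanish) and once by the $\cV^+$ endpoints (where $\sdeg=0$ on interior vertices reduces the sum to sinks minus sources). The only cosmetic difference is that you start from $\sum_{v\in\cV^+}\sdeg(v)f(v)$ while the paper starts from $\sum_{\{x\to y\}}(f(y)-f(x))$; these quantities agree because $f$ vanishes on $\cV^-$, which is also exactly the fact you correctly invoke at the end to pass from sinks and sources in $\cV^+$ to all of $\Sink(\ora\cG)$ and $\Source(\ora\cG)$.
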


\begin{proof} By assumption $\sum_{\{x \to y\} \in \ora\cG} (f(y)-f(x)) = \sum_{v \in \cV^-}\( \sum_{v \to w} f(w) - \sum_{u \to v} f(u)  \) = 0$, while
$\sum_{\{x \to y\} \in \ora\cG} (f(y)-f(x)) = \sum_{u \in \cV^+} \sdeg(u)f(u) = \sum_{u \in \Sink(\ora\cG)} \deg(u) f(u) - \sum_{u \in \Source(\ora\cG)} \deg(u) f(u)$
since $\sdeg(u) =0$ for all interior vertices $u \in \cV^+$.
\end{proof}

\section{Atoms}\label{atoms-sect}

The atoms of the longest  element $w^C_n \in \I(C_n)$ have a number of special properties,
which we review in this section.
Let $S\subset \ZZ$ be a set of integers.
A \emph{perfect matching} on a set $S$ is a set $M$ of pairwise disjoint 2-element subsets $\{i,j\}$, referred to as \emph{blocks},
whose  union is $S$. A perfect matching $M$ is \emph{symmetric} if $\{i,j \} \in M$ implies $-\{i,j\}  = \{-i,-j\} \in M$,
and \emph{noncrossing} if it does not occur that $i<a<j<b$ for any $\{i,j\},\{a,b\} \in M$. 
Let $\NCSM(n)$ denote the set of noncrossing, symmetric, perfect matchings on the set $[\pm n]$.
The three elements of $\NCSM(3)$ are
\[\{ \{\pm 1\}, \{\pm 2\}, \{ \pm 3\}\}, 
\qquad
 \{\pm \{1,2\},\{\pm 3\}\}, 
 \qquand \{\{\pm 1\},\pm \{2,3\}\}.\]
In general, $|\NCSM(n)| = \binom{n}{\lfloor n/2\rfloor}$; see \cite[A001405]{OEIS}.
We emphasize the following basic observation:

\begin{fact}
If $M \in \NCSM(n)$ and $\{i,j\} \in M$, then 
$i$ and $j$ have the same sign or $i=-j$.
\end{fact}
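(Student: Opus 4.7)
The plan is to argue by contradiction. Suppose some block $\{i,j\} \in M$ has $i$ and $j$ of opposite signs yet $i \neq -j$. Without loss of generality assume $i < 0 < j$. The symmetry of $M$ then supplies a second block $\{-i,-j\} \in M$, and the hypothesis $i \neq -j$ (equivalently $-i \neq j$) forces this second block to be disjoint from the first, hence distinct.

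The crux is then a one-line case analysis on whether $|i| < j$ or $|i| > j$, the equality $|i| = j$ being exactly the excluded case. If $|i| < j$, then $-j < i$ and $-i < j$ while $i < -i$, so the four values satisfy $-j < i < -i < j$; setting $a = -j$, $c = i$, $b = -i$, $d = j$ exhibits the forbidden pattern $a < c < b < d$ with $\{a,b\},\{c,d\} \in M$. The case $|i| > j$ is entirely symmetric: one gets the ordering $i < -j < j < -i$ and the same crossing configuration between $\{i,j\}$ and $\{-i,-j\}$.

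Either subcase contradicts the noncrossing hypothesis on $M$, so the original assumption is untenable and the conclusion follows. The argument is elementary; I do not anticipate any genuine obstacle beyond remembering to extract the symmetric partner block $\{-i,-j\}$ before comparing magnitudes.
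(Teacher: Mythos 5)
Your argument is correct: the paper states this as an unproved ``basic observation,'' and your proof supplies exactly the intended justification, namely that a block $\{i,j\}$ with $i<0<j$ and $i\neq -j$ would cross its mirror block $\{-i,-j\}$ (which is distinct by the assumption $i\neq -j$), since either $-j<i<-i<j$ or $i<-j<j<-i$ realizes the forbidden interleaving pattern. Both cases are handled cleanly, so there is nothing to correct.
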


If  $w=w_1w_2\cdots w_n$ is a word
then we write $[[w]]$ for the subword 
formed by omitting each repeated letter after its first appearance.
For example, $[[31231124]] = 3124$.
Suppose $M$ is a symmetric, noncrossing, perfect matching on a subset of $[\pm n]$.
Define
\[
\Pair(M) \omdef = \{ (a,-b): \{a,b\} \in M \text{ and }0<a<b\} \sqcup \{ (-a,-a) : \{-a,a\} \in M \text{ and }0<a\}.
\]
Let $(a_1,b_1),(a_2,b_2),\dots,(a_l,b_l)$
(respectively, $ (c_1,d_1), (c_2,d_2),\dots,(c_l,d_l)$) be the elements $\Pair(M)$ 
listed in order such that $b_1<b_2<\dots<b_l$
(respectively, $ c_1<c_2<\dots<c_l$).
Define the words
\[ \alpha_{\min}(M) \omdef = [[a_1b_1a_2b_2\cdots a_lb_l]]
\qquand
\alpha_{\max}(M) \omdef = [[c_1d_1c_2d_2\cdots c_ld_l]].
\]
If $M \in \NCSM(n)$, then $\alpha_{\min}(M)$ and $\alpha_{\max}(M)$
contain exactly one letter from $\{\pm i\}$ for each $i \in [n]$,
so  may be interpreted as elements of $C_n$.
\begin{example} \label{ex:min-max-atoms}
If
$
M = \{ \pm\{1,3\},\pm\{4,7\}, \pm\{5,6\}, \{\pm 8\}\}
$
 then
\[ \Pair(M) =\{ (-8,-8), (4,-7), (5,-6), (1,-3)\} =  \{ (-8,-8), (1,-3), (4,-7), (5,-6)\} \}\]
and
$
\alpha_{\min}(M) = \bar8 4 \bar75\bar 6 1 \bar 3
$
and
$
\alpha_{\max}(M) = \bar 8 1\bar 3 4\bar75\bar 6.
$
Note that these words do not represent signed permutations since no block of $M$ contains $\pm 2$.
\end{example}

If $u$ and $v$ are words, both with $n\geq i+2$ letters, 
then we write $u \vartriangleleft_i v$ to mean that 
\be
\label{tri-eq}
u_iu_{i+1}u_{i+2} = cab,\quad v_iv_{i+1}v_{i+2} = bca,\quand u_j=v_j\text{ for }j \notin \{i,i+1,i+2\}
\ee
for some numbers $a<b<c$.
Define $<_\cA$ as the transitive closure of the relations $\vartriangleleft_i$ for all $i \geq 1$.
Equivalently, $<_\cA$ is the transitive closure of the relation on words with
\be\label{<A-eq} \cdots cab\cdots <_\cA \cdots bca \cdots\ee
whenever $a<b<c$ and the corresponding ellipses mask identical subsequences.
This relation is a partial order since it 
is a sub-relation of lexicographic order. We apply $<_\cA$ to signed permutations via their one-line representations.
Define 
\[\cA_n \omdef= \cA(\wB)\subset C_n\]
 and for each $M \in \NCSM(n)$ let 
$\cA_M \omdef = \{ w \in C_n : \alpha_{\min}(M) \leq_\cA w \leq_\cA \alpha_{\max}(M)\}. $

\begin{example}
    Let $M = \{\{\pm 1\}, \pm\{2,3\}, \pm\{4,5\}\} \in \NCSM(5)$. The interval $\cA_M$ is
    \begin{center}
        \begin{tikzpicture}[node distance=0.5cm]
            \node (1) {$\bar 1  2  \bar 3  4 \bar 5$};
            \node [below=of 1, xshift=-1cm] (2a) {$\bar 1  2  4  \bar 5 \hs \bar 3$} edge  (1);
            \node [below=of 1, xshift=1cm] (2b) {$2\hs \bar 3 \hs \bar 1 \hs 4 \hs \bar 5$} edge (1);
            \node [below=of 2a] (3a) {$\bar 1  4  \bar 5 2 \bar 3$} edge   (2a);
            \node [below=of 2b] (3b) {$2  \bar 3  4  \bar 5  \bar 1$} edge  (2b);
            \node [below=of 3a] (4a) {$4  \bar 5\hs  \bar 1  2  \bar 3$} edge  (3a);
            \node [below=of 3b] (4b) {$2  4  \bar 5\hs  \bar 3\hs  \bar 1$} edge  (3b);
            \node [below=of 4a, xshift=1cm] (5) {$4  \bar 5  2  \bar 3\hs  \bar 1$} edge  (4a) edge  (4b);
        \end{tikzpicture}
    \end{center}
\end{example}

\begin{theorem}[See \cite{HMP6}]\label{interval-thm}
Relative to $<_\cA$, there is a disjoint poset decomposition 
\[\cA_n = \bigsqcup_{M \in \NCSM(n)} \cA_M.\]
In particular, each $x \in \cA_n$ belongs to $\cA_M$ for a unique $M \in \NCSM(n)$,
and if $x\in \cA_M$ and $y \in \cA_N$ for $M,N \in \NCSM(n)$, then $x\leq_\cA y$ only if $M=N$.
\end{theorem}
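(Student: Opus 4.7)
The plan is to produce, for each atom $x \in \cA_n$, a canonical matching $\mu(x) \in \NCSM(n)$, and to show that the fiber $\mu^{-1}(M) = \cA_M$ for each $M \in \NCSM(n)$. The disjoint union statement and the claim about comparability then follow immediately, since $\mu(x)$ is forced to be an invariant of the $<_\cA$-equivalence class of $x$ within $\cA_n$.

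First I would construct the map $x \mapsto \mu(x)$ directly from the one-line notation. Each atom $x = x_1 x_2 \cdots x_n$ of $w^C_n$ contains exactly one representative of $\{i,-i\}$ for each $i\in[n]$, and the involution $x^{-1} \circ x = w^C_n$ is built up by successive Hecke conjugations that group these letters into pairs. Concretely, one locates a ``peelable'' block --- either a lone negative letter $-a$ whose preceding entries all have absolute value less than $a$ (giving a fixed block $\{-a,a\}$), or an adjacent pair of letters whose signs and positions match the pattern of a nested non-fixed block $\pm\{a,b\}$ --- deletes it, and iterates. The fact that this procedure terminates and yields a noncrossing, symmetric, perfect matching is where the atom hypothesis is essential; arbitrary elements of $C_n$ will fail to decompose this way, and this must be justified by induction on $n$ using the characterization of $\cA_n$ as minimal-length elements achieving $w^{-1}\circ w = w^C_n$.

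Next I would verify that $\alpha_{\min}(M), \alpha_{\max}(M) \in \cA_n$ with $\mu(\alpha_{\min}(M)) = \mu(\alpha_{\max}(M)) = M$, by direct inspection of the definitions of $\Pair(M)$ and the orderings used in \eqref{<A-eq}. Then I would check that a single move $u \vartriangleleft_i v$ between elements of $\cA_n$ leaves $\mu$ unchanged: this is a case analysis on the three-letter pattern $cab \leftrightarrow bca$, where one shows that since the swap only permutes values within a fixed multiset and preserves the interleaving of signs relative to the rest of the word, the peeling procedure produces the same matching on both sides. Conversely, one must show that any two atoms inside a single fiber $\mu^{-1}(M)$ are connected by a chain of $\vartriangleleft_i$-moves, and that $\alpha_{\min}(M)$ and $\alpha_{\max}(M)$ are the unique minimum and maximum of the induced sub-poset. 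This identifies $\mu^{-1}(M)$ with $\cA_M$.

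The main obstacle is verifying convexity, namely that any atom $x$ satisfying $\alpha_{\min}(M) \leq_\cA x \leq_\cA \alpha_{\max}(M)$ actually has $\mu(x)=M$, and that no atom $y$ with $\mu(y) \ne M$ can sneak into the lexicographic interval between the two endpoints. A priori, a chain of $\vartriangleleft_i$-moves realizing $\alpha_{\min}(M) <_\cA \alpha_{\max}(M)$ might pass through elements outside $\cA_n$, or the ambient partial order $<_\cA$ on $C_n$ might allow chains between atoms that cross matching-boundaries. Ruling this out cleanly seems to require a careful induction on the recursive structure of $\NCSM(n)$: peel the outermost block of $M$ and reduce to a smaller interval $\cA_{M'}$ inside $\cA_{n-1}$ or $\cA_{n-2}$, with the inductive hypothesis providing the control needed to lift the interval structure back up. This is the content of the results in \cite{HMP6} on which we rely.
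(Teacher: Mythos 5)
Your outline is directionally sensible, but as written it is a plan rather than a proof: every substantive claim is stated and then deferred. The well-definedness of the peeling map $\mu$ (independence from the order in which blocks are removed), the invariance of $\mu$ under a single move $u \vartriangleleft_i v$, the connectivity of each fiber $\mu^{-1}(M)$ under $\vartriangleleft_i$-moves, the identification of $\alpha_{\min}(M)$ and $\alpha_{\max}(M)$ as the unique extrema, and the convexity/incomparability statement are each announced as things ``one must show,'' and your final sentence concedes that all of this ``is the content of the results in \cite{HMP6} on which we rely.'' That is in fact what the paper does too --- its proof of this theorem is a two-sentence citation --- so your attempt does not contain a wrong step, but it also does not supply any of the arguments that would make it self-contained, and it cannot be graded as a correct independent proof.

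If you intend to derive the theorem from \cite{HMP6} as the paper does, there is one concrete point you omit: the relevant result, \cite[Theorem 5.6]{HMP6}, describes the $\leq_\cA$-components of the \emph{inverse} atom set $\cA(z)^{-1} = \{w^{-1} : w \in \cA(z)\}$, not of $\cA(z)$ itself. The decomposition $\cA_n = \bigsqcup_{M \in \NCSM(n)} \cA_M$ therefore only follows after one also invokes \cite[Proposition 2.7]{HMP6}, which says that for the longest element the atom set is closed under inversion, i.e.\ $\cA_n = \cA_n^{-1}$. Without that step the cited decomposition lives on the wrong set. If instead you genuinely want a from-scratch proof, you would need to actually carry out the verification that your peeling procedure is choice-independent (this is Theorem~\ref{ndes-thm} in the paper, again imported from \cite{HMP6}) and then prove the interval structure of the fibers, which is the real content of \cite[Theorem 5.6]{HMP6}; neither is routine, and neither appears in your writeup.
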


This result remains true when $n=0$ if we take both $\cA_0$ and $C_0$ to be
 the singleton set containing just the empty word $\emptyset$,
 and define $\alpha_{\min}(M) = \alpha_{\max}(M) = \emptyset$
 if $M =\varnothing \in \NCSM(0)$.

\begin{proof}
 \cite[Theorem 5.6]{HMP6} describes the connected components of $\cA(z)^{-1}  = \{w^{-1} : w \in \cA(z)\}$ under $\leq_\cA$ 
 for any $z \in \I(C_n)$ in terms of symmetric noncrossing matchings, and implies that $\cA_n^{-1} = \bigsqcup_{M \in \NCSM(n)} \cA_M$. The theorem follows since $\cA_n =\cA_n^{-1}$ by
  \cite[Proposition 2.7]{HMP6}.
\end{proof}

Fix an atom $w \in  \cA_n$.
We define the \emph{shape} of $w$ to be the unique matching $\sh(w) \in \NCSM(n)$
with $w \in \cA_{\sh(w)}$.
It is helpful to understand how the shape of $w$ can be extracted from
the one-line representation $w_1w_2\cdots w_n$.
This can be done as follows.

From $w \in \cA_n$, we produce a sequence of words $w^0,w^1,\dots,w^l$.
Start by letting $w^0 = w_1w_2\cdots w_n$.
For each $i>0$, form $w^i$ by removing an arbitrary descent from $w^{i-1}$,
where a \emph{descent} in a word $a_1a_2\cdots a_n$ is a consecutive subword $a_ia_{i+1}$ with $a_i > a_{i+1}$.
The sequence terminates when we obtain an increasing word $w^l$. 
Let $\{c_1,c_2,\dots,c_k\}$ be the set of letters in $w^l$ and 
suppose $q_ip_i$ is the descent removed from $w^{i-1}$ to form $w^i$.
We then define
\[
\NNeg(w) \omdef= \{ -c_1,-c_2,\dots,-c_k\}
\qquand
\NDes(w) \omdef= \{ (q_1,p_1), (q_2,p_2),\dots,(q_l,p_l)\}.
\] 
We refer to $\NNeg(w)$ and $\NDes(w)$ as the \emph{nested negated set} and \emph{nested descent set} of $w$.

\begin{theorem}[See \cite{HMP6}]
\label{ndes-thm}
No matter how the words $w^0,w^1,\dots,w^l$ are constructed,
we have:
\ben
\item[(a)] $\NNeg(w) = \{ a : \{-a,a\} \in \sh(w)\text{ and }0<a\}$.
\item[(b)] $\NDes(w) = \{ (a,-b) : \{a,b\} \in \sh(w)\text{ and }0<a<b\}.$
\een
\end{theorem}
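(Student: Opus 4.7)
The plan is to use the interval decomposition $\cA_n = \bigsqcup_{M \in \NCSM(n)} \cA_M$ from Theorem~\ref{interval-thm}, together with the explicit representative $\alpha_{\min}(M)$ of each interval and the connectivity of $\cA_M$ under $<_\cA$. Since $\sh$ is constant on $\cA_M$, it suffices to show (i) that for $\alpha_{\min}(M)$, any valid sequence of descent removals yields the $(\NNeg, \NDes)$ predicted by the theorem, and (ii) that this output is preserved under single $<_\cA$ moves among atoms. Combined with a confluence argument, these steps imply the theorem for every $w \in \cA_M$.

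For (i), recall that $\alpha_{\min}(M) = [[a_1 b_1 a_2 b_2 \cdots a_l b_l]]$, where the pairs $(a_i, b_i) \in \Pair(M)$ are ordered with $b_1 < b_2 < \cdots < b_l$. After the collapse $[[\cdot]]$, each pair $(-a, -a)$ coming from a singleton block $\{-a, a\} \in M$ contributes one letter $-a$, while each pair $(a, -b)$ coming from a non-singleton block contributes two adjacent letters with $a > -b$. One verifies by inspection that the descents of the resulting word occur exactly at the positions of these mixed-sign pairs and are pairwise non-adjacent (no three consecutive letters are strictly decreasing, since the $b_i$ are increasing and separate consecutive blocks). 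Thus any order of descent removals peels off the pairs $(a, -b)$ independently, leaving behind precisely the singletons $-a$ with $\{-a, a\} \in M$, and yields the predicted $\NNeg$ and $\NDes$.

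For (ii), suppose $u \vartriangleleft_i v$, so $u = \cdots c a b \cdots$ and $v = \cdots b c a \cdots$ for some $a < b < c$. Both words contain the descent $(c, a)$: at positions $(i, i+1)$ of $u$ and at positions $(i+1, i+2)$ of $v$. Removing this descent from each word yields the identical shorter word $\cdots b \cdots$ and records the same pair $(c, a)$ in $\NDes$. By induction on length applied to this common shorter atom, one particular pair of descent-removal sequences (starting with this step) produces the same final $(\NNeg, \NDes)$ from $u$ and $v$.

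The main obstacle is confluence: showing that for any $w \in \cA_n$, any two sequences of descent removals give the same final output. The cleanest route is to strengthen the inductive hypothesis to the claim that each descent $(q, p)$ of $w \in \cA_M$ corresponds to a non-singleton block $\{q, -p\} \in M$ whose removal yields an atom $w' \in \cA_{M'}$ with $M' = M \setminus \{\pm\{q, -p\}\}$ in the appropriately relabeled $C_{n-2}$. Given this, any order of removals simply peels off blocks of $M$ one at a time and yields the same final output, while invariance under $<_\cA$ from (ii) propagates the theorem from $\alpha_{\min}(M)$ to all of $\cA_M$. Identifying the block of $M$ attached to each descent and verifying that reduction preserves the atom structure is the substantive combinatorial content, and relies on the detailed analysis of $\cA_M$ developed in \cite{HMP6}.
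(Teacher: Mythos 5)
The paper does not prove this statement from scratch: its entire proof is the observation that the claim is equivalent to \cite[Theorem-Definition 3.10]{HMP6} after applying $\cA_n = \cA_n^{-1}$ \cite[Proposition 2.7]{HMP6}. Your proposal instead attempts a genuine argument, and the parts you carry out are sound: the direct verification for $\alpha_{\min}(M)$ is correct (the descents of $[[a_1b_1\cdots a_lb_l]]$ are exactly the mixed-sign pairs, all junctions $b_i a_{i+1}$ are ascents because $b_i<0<a_{i+1}$ or $b_i<b_{i+1}=a_{i+1}$, and removals do not interact), and the observation that $u \vartriangleleft_i v$ implies $u$ and $v$ admit removal sequences with a common continuation is also correct.

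The gap is exactly where you place it, but it is fatal to the argument as written. Steps (i) and (ii) only prove the theorem \emph{modulo confluence}: without knowing that all removal sequences from a fixed word agree, agreement of one particular pair of sequences for $u \vartriangleleft_i v$ tells you nothing about $\NNeg$ and $\NDes$ as defined (which must be independent of all choices). Your proposed fix --- that every descent $q_ip_i$ of an atom satisfies $0<q_i<-p_i$ with $\{q_i,-p_i\}\in\sh(w)$, and that deleting it yields an atom of the matching with that block removed --- is precisely the content of the cited result in \cite{HMP6}; note that you cannot invoke Corollary~\ref{descent-cor} here, since in the paper that corollary is \emph{deduced from} Theorem~\ref{ndes-thm}. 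Moreover, once that key lemma is granted, the theorem follows immediately by induction on $|M|$ (each removal strips one non-singleton block and records $(a,-b)$, terminating at the all-singleton word), so your steps (i) and (ii) become superfluous rather than load-bearing. In short: your outline correctly locates the substantive combinatorial content, but it neither proves it nor reduces the theorem to anything easier than the statement being proved, so as a self-contained proof it is incomplete --- it ultimately rests on the same external citation as the paper, just less directly.
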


\begin{proof}
This is equivalent to \cite[Theorem-Definition 3.10]{HMP6} since
$\cA_n = \cA_n^{-1}$ \cite[Proposition 2.7]{HMP6}.
\end{proof}

\begin{example}
    If $M = \{ \pm\{1,3\},\{\pm 2\}, \pm\{4,7\}, \pm\{5,6\}, \{\pm 8\}\}$ and $w = \bar 8 \hs\bar2 1  4  \bar 7  5  \bar 6 \hs \bar 3$ then
    \begin{equation*}
        w = \bar 8\hs\bar2  1  4  \bar 7  5  \bar 6 \hs \bar 3 \vartriangleleft_6 \bar 8\hs\bar2  1  4  \bar 7 \hs \bar 3  5  \bar 6 \vartriangleleft_4 \bar 8\hs\bar2  1  \bar 3  4  \bar 7  5  \bar 6 = \alpha_{\max}(M),
    \end{equation*}
    so $w \in \cA_M$. There are two ways to progressively remove descents from $w$ as described above:
    \begin{align*}
        w = w^0 = \bar 8\hs\bar2  1  4  \bar 7  5  \bar 6 \hs \bar 3, \quad 
        w^1 = \bar 8\hs\bar2  1  5  \bar 6 \hs \bar 3, 
        \quad w^2 = \bar 8\hs\bar2  1  \bar 3, \quad w^3 = \bar 8\hs\bar2\\
        w = w^0 = \bar 8\hs\bar2  1 4  \bar 7  5  \bar 6 \hs \bar 3, \quad 
        w^1 = \bar 8\hs\bar2  1  4  \bar 7 \hs \bar 3, \quad 
        w^2 = \bar 8\hs\bar2  1  \bar 3, \quad w^3 = \bar 8\hs\bar2
    \end{align*}
    Both give $\NNeg(w) = \{2,8\}$ and $\NDes(w) = \{(1,-3), (4,-7), (5,-6)\}$ as claimed by Theorem~\ref{ndes-thm}.
\end{example}

The preceding theorem has several implications, starting with the following observation.

\begin{corollary}
\label{descent-cor}
Let $M \in \NCSM(n)$ and $w \in \cA_M$.
If $w_i>w_{i+1}$ for some $i \in [n-1]$ then $0 < w_i < -w_{i+1}$ and $\{ w_i,-w_{i+1}\} \in M$.
The word $w_1w_2\cdots w_n$ therefore
contains no consecutive subwords of the form $ba$ where $0<a<b$ or $a<b<0$,
or of the form $cba$ where $a<b<c$.
\end{corollary}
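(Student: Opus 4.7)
The plan is to read off both statements almost directly from Theorem~\ref{ndes-thm}(b), by being careful to exploit the freedom in the descent-removal procedure.

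First I would prove the main assertion. Fix $w \in \cA_M$ and suppose $w_i > w_{i+1}$. Construct the sequence $w^0, w^1, \dots, w^l$ of Theorem~\ref{ndes-thm} starting from $w^0 = w_1w_2\cdots w_n$ and removing the descent at position $i$ first, so that $(q_1,p_1) = (w_i, w_{i+1})$. Since the conclusion of Theorem~\ref{ndes-thm} is independent of the order of descent removals, this pair lies in $\NDes(w)$, and part (b) of the theorem together with $\sh(w)=M$ forces
\[
(w_i, w_{i+1}) = (a, -b) \text{ for some } \{a,b\} \in M \text{ with } 0 < a < b.
\]
This immediately yields $0 < w_i < -w_{i+1}$ and $\{w_i,-w_{i+1}\} \in M$, which is the main claim.

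Next I would derive the three forbidden patterns as direct corollaries. If $w_iw_{i+1} = ba$ with $0 < a < b$, then $w_{i+1} = a > 0$, contradicting $w_{i+1} < 0$. If $w_iw_{i+1} = ba$ with $a < b < 0$, then $w_i = b < 0$, contradicting $w_i > 0$. Finally, if $w_iw_{i+1}w_{i+2} = cba$ with $a<b<c$, then both $(w_i,w_{i+1}) = (c,b)$ and $(w_{i+1},w_{i+2}) = (b,a)$ are descents; the first part applied to the former forces $w_{i+1} = b < 0$, while applied to the latter it forces $w_{i+1} = b > 0$, a contradiction.

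There is no substantial obstacle here: the argument is essentially just invoking Theorem~\ref{ndes-thm}(b) after using the freedom to choose which descent to remove first, and then unpacking what the resulting constraint on $(w_i,w_{i+1})$ excludes. The only point requiring mild care is the observation that in the $cba$ case the two overlapping descents can each be chosen as the initial descent in an independently valid run of the algorithm, so that both conclusions apply simultaneously and the sign contradiction on $b$ is legitimate.
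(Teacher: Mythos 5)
Your proof is correct and is essentially the argument the paper intends: the corollary is stated as an immediate consequence of Theorem~\ref{ndes-thm}, and your use of the freedom to remove the descent at position $i$ first, so that $(w_i,w_{i+1})\in\NDes(w)$ and hence has the form $(a,-b)$ with $\{a,b\}\in M$ and $0<a<b$, is exactly the intended reading of part (b). The derivation of the three forbidden patterns from the main claim is also right, and once the main claim is known for every descent position the $cba$ case follows by applying it to both overlapping descents without any further appeal to the algorithm.
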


Subwords in the following lemma need not be consecutive.

\begin{lemma}\label{order-lem}
Let $M \in \NCSM(n)$ and $w \in \cA_M$.
Suppose $0<a<b<c<d$.
\ben
\item[(a)] If $\{a,d\},\{b,c\} \in M$ then $a\bar d b\bar c$ is a subword of $w_1w_2\cdots w_n$.
\item[(b)] If $\{a,b\},\{\pm c\} \in M$ then $\bar c a \bar b$ is a subword of $w_1w_2\cdots w_n$.
\item[(c)] If $\{\pm a\},\{\pm b\} \in M$ then $\bar b \bar a$ is a subword of $w_1w_2\cdots w_n$.
\een
\end{lemma}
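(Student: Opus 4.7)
My plan is to prove part (c) directly from Theorem~\ref{ndes-thm} and to prove parts (a) and (b) by induction along chains of covers in the interval $\cA_M$, which is connected under the cover relation $\vartriangleleft_i$ by Theorem~\ref{interval-thm}. For (c), Theorem~\ref{ndes-thm}(a) gives $\{a, b\} \subseteq \NNeg(w)$, so both $-a$ and $-b$ appear among the letters of the strictly increasing terminal word $w^l$ of any descent-removal sequence for $w$; since $-b < -a$, the letter $-b$ precedes $-a$ in $w^l$, and since descent removal preserves the relative order of undeleted letters, the subword $\bar b\bar a$ appears in $w$.

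For parts (a) and (b), I first verify the base case by inspection. In $\alpha_{\min}(M)$, the relevant pairs in $\Pair(M)$ (namely $(a,-d)$ and $(b,-c)$ for (a); $(-c,-c)$ and $(a,-b)$ for (b)) appear in the correct relative order when sorted by increasing second coordinate, and each of the signed letters $a, -d, b, -c$ (respectively $-c, a, -b$) occurs in exactly one pair of $\Pair(M)$, so they survive deduplication in the intended order. The inductive step amounts to showing that if $u, v \in \cA_M$ with $u \vartriangleleft_i v$ and $u$ contains the subword, then so does $v$. The cover $u \vartriangleleft_i v$ cyclically permutes three letters $L_1 < L_2 < L_3$ at positions $i, i+1, i+2$; a short computation shows it reverses precisely the pairs $(L_1, L_2)$ and $(L_2, L_3)$ while preserving $(L_1, L_3)$, so the subword can only be destroyed if two of its letters occupy one of these reversing pairs.

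I will enumerate all such configurations, dispatching most via Corollary~\ref{descent-cor}: the descent forced at $u_i u_{i+1}$ by the cover (with $u_i > 0$ and $\{u_i, -u_{i+1}\} \in M$) is incompatible with the subword letters' prescribed block memberships in $M$. A handful of residual configurations (for instance, a would-be reversal of $(a, \bar c)$ in part (a)) survive the descent analysis but force $u$ to contain a three-letter consecutive pattern like $a, \bar d, \bar c$; this is incompatible with $u$ already having the full subword $a \bar d b \bar c$, since no position lies strictly between the positions of $\bar d$ and $\bar c$ in which to place $b$, contradicting the inductive hypothesis. The main bookkeeping obstacle is organizing the case analysis for part (a), where the subword has six ordered pairs and each reversing configuration must be examined individually.
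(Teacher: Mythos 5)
Your argument is correct, and all the cases you sketch do close (I checked the six ordered pairs for part (a): four die immediately because Corollary~\ref{descent-cor} forces the descent letters of a cover to satisfy $0<L_3<-L_1$ with $\{L_3,-L_1\}\in M$, which is incompatible with the prescribed block memberships, and the two survivors both force $a\,\bar d\,\bar c$ consecutive, killed by your ``no room for $b$'' observation). But the paper organizes the work differently and more economically. It defines, for a symmetric union of blocks $S=-S$, the restricted word $w_S$ obtained by deleting all letters outside $S$, and notes that Corollary~\ref{descent-cor} makes $w\mapsto w_S$ monotone for $\leq_\cA$: in any cover $cab\to bca$ the letters $c$ and $-a$ lie in a common block, so $c$ and $a$ are either both in $S$ or both outside it, and the restriction of a cover move is then either the identity or again a cover move. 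Taking $S=\{\pm a,\pm b,\pm c,\pm d\}$ and using $\alpha_{\min}(M)_S=\alpha_{\max}(M)_S=a\bar d b\bar c$, antisymmetry of $\leq_\cA$ pins down $w_S$ exactly for every $w\in\cA_M$. This single observation subsumes your entire pair-by-pair analysis and in particular makes your residual configurations vacuous, since the paper concludes $w_S$ \emph{equals} $a\bar d b\bar c$ rather than merely containing it as a subword. Your treatment of part (c) via Theorem~\ref{ndes-thm} (the terminal increasing word $w^l$ is a subword of $w$ and lists $\bar b$ before $\bar a$) is a valid shortcut the paper does not bother with, as it handles (b) and (c) by the same restriction device. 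In short: same underlying inputs (Theorem~\ref{interval-thm} and Corollary~\ref{descent-cor}), but the paper's monotone-restriction lemma trades your bookkeeping for one clean structural fact.
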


\begin{proof}
Suppose $S =-S\subset [\pm n]$ is a union of blocks in $M$.
Given $v \in \cA_n$, let $v_S$  be the subword of $v_1v_2\cdots v_n$
with all letters not in $S$ removed.
It follows from Corollary~\ref{descent-cor} that if $u,v\in \cA_n$ and $u\leq_\cA v$ then $u_S \leq_\cA v_S$.
Let $u = \alpha_{\min}(M)$ and $v = \alpha_{\max}(M)$.
If $\{a,d\},\{b,c\} \in M$ and $S = \{ \pm a, \pm b, \pm c, \pm d\}$
then
it follows that
$ u_S  \leq_\cA w_S \leq_\cA v_S $.
Since in this case $u_S = v_S = a \bar d b \bar c$, we deduce that
$w_S=a\bar d b\bar c$ is a subword of $w_1w_2\cdots w_n$. Parts (b) and (c) follow similarly.
\end{proof}

Let $w \in \cA_n$.
If $1 \leq i < j \leq n$ are indices such that
$0 < w_i < -w_j$ and $\{w_i,-w_j\} \in \sh(w)$, then we say that $i$ and $j$ are \emph{complementary indices} in $w$.
If $i \in [n]$ is such that $w_i < 0$ and $\{ \pm w_i\} \in \sh(w)$,
then we say that $i$ is a \emph{symmetric index} in $w$.

\begin{corollary}\label{compl-cor}
If $w \in \cA_n$
then each $i \in [n]$ is symmetric or part of a complementary pair in $w$.
\end{corollary}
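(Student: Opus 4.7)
The plan is to fix an index $i \in [n]$ and identify the block of $\sh(w) \in \NCSM(n)$ that contains $w_i$, then argue accordingly. A symmetric noncrossing perfect matching on $[\pm n]$ has two types of blocks: self-symmetric blocks $\{\pm a\}$ with $a > 0$, and nontrivial blocks $\{a, b\}$ with $0 < a < b$ (each appearing together with its negative image $\{-a, -b\}$).

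The first step is to determine, via Theorem~\ref{ndes-thm}, which element of each block actually appears as a letter of $w$. From part (a), the set $\NNeg(w)$ equals $\{a > 0 : \{\pm a\} \in \sh(w)\}$; by its definition as $\{-c_1, \dots, -c_k\}$ where the $c_j$'s are the letters of the terminal word $w^l$, the letters from self-symmetric blocks appearing in $w$ are precisely $\{-a : \{\pm a\} \in \sh(w),\ a > 0\}$. From part (b), each nontrivial block $\{a, b\} \in \sh(w)$ with $0 < a < b$ contributes a unique descent $(a, -b)$ to $\NDes(w)$, which is removed at some step of the process, so both $a$ and $-b$ appear as letters of $w$. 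A simple count shows that these two sources exhaust the $n$ letters of $w$.

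Now fix $i \in [n]$. If $w_i = -a$ for some self-symmetric block $\{\pm a\}$, then $w_i < 0$ and $\{\pm w_i\} \in \sh(w)$, so $i$ is symmetric and we are done. Otherwise $w_i \in \{a, -b\}$ for some nontrivial block $\{a, b\}$ with $0 < a < b$, and the other element of $\{a, -b\}$ appears as $w_j$ for a unique $j$. The remaining task is to show that $a$ precedes $-b$ in the one-line representation $w_1 w_2 \cdots w_n$: if so, then when $w_i = a$ we have $i < j$ and $(i, j)$ is a complementary pair, while when $w_i = -b$ we have $j < i$ and $(j, i)$ is a complementary pair.

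This final step is the main (mild) obstacle. At the step $w^{k-1} \to w^k$ in which the descent $(a, -b)$ is removed, the letters $a$ and $-b$ appear as consecutive entries of $w^{k-1}$ with $a$ in the earlier position (a descent is written with its larger entry first, and $a > 0 > -b$). Since each step of the descent-removal process only deletes pairs of letters and preserves the relative order of the remaining letters, $a$ must precede $-b$ in $w = w^0$ as well. I do not anticipate serious difficulty beyond this point, since Theorem~\ref{ndes-thm} supplies essentially all of the structural information required.
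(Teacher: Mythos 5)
Your proof is correct and follows the route the paper intends: the corollary is stated there without proof as an immediate consequence of Theorem~\ref{ndes-thm}, and your argument (reading off from $\NNeg(w)$ and $\NDes(w)$ which letter of each block appears in $w$, counting to see these exhaust all $n$ positions, and using that descent removal preserves relative order to get $a$ before $-b$) is exactly the fleshed-out version of that implication.
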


Therefore, if $w \in \cA_n$ and $i \in [n]$ is such that $w_i>0$, then there exists a complementary index $j \in [n]$
with $i<j$ and  $w_i < -w_j$ and $\{w_i,-w_j\} \in \sh(w)$.
In turn, since  $1$ cannot be the second index in a complementary pair, if $w \in \cA_n$ and $w_1 < 0$
then we must have $\{ \pm w_1\} \in \sh(w)$.

\begin{lemma}\label{interval-lem}
Suppose $1\leq i < j\leq n$ are complementary indices for $w \in \cA_n$ and $e \in [\pm n]$.
\ben
\item[(a)] If $w_j < e < -w_i<0$, then $e \notin \{ w_1,w_2,\dots,w_j,w_{j+1}\}$.

\item[(b)] If $0<w_i < e < -w_j$, then $e \notin \{ w_1,w_2,\dots,w_{j}\}$.
\een
\end{lemma}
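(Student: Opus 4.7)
The plan is to prove both parts by contradiction: suppose $e = w_k$ for some $k$ in the supposedly forbidden range; use Corollary~\ref{compl-cor} to see that $k$ is either symmetric or belongs to a complementary pair; and then combine the noncrossing property of $M := \sh(w)$ with Lemma~\ref{order-lem}(a) to produce a conflict with the location of $k$.

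For part (a), observe first that the hypothesis $w_j < e < -w_i < 0$ forces $e < 0$ and $w_i < -e < -w_j$, so the block $B = \{w_i, -w_j\} \in M$ brackets the positive value $-e$. Suppose $e = w_k$ with $k \leq j+1$. If $k$ is symmetric, then $\{e, -e\} \in M$, and the ordering $e < w_i < -e < -w_j$ exhibits a crossing with $B$, contradicting that $M$ is noncrossing. Otherwise $k$ is the second index of a complementary pair $(k', k)$ with $k' < k$ and $\{w_{k'}, -e\} \in M$. Noncrossing with $B$, together with the fact that $w$ is a signed permutation (which rules out the degenerate coincidences $w_{k'} = w_i$ and $-e = -w_j$), forces the strict nesting $w_i < w_{k'} < -e < -w_j$. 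Applying Lemma~\ref{order-lem}(a) with $a = w_i$, $b = w_{k'}$, $c = -e$, $d = -w_j$ then shows that $w_i, w_j, w_{k'}, e$ appear in this order as a subword of $w_1 w_2 \cdots w_n$, so $i < j < k' < k$, whence $k \geq j+2$, contradicting $k \leq j+1$.

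Part (b) follows the same template. Suppose $e = w_k$ with $k \leq j$. Since $e > 0$, the index $k$ cannot be symmetric, so it is the first index of a complementary pair $(k, k')$ with $k < k'$ and $\{e, -w_{k'}\} \in M$. Noncrossing with $\{w_i, -w_j\}$, together with injectivity of $w$, forces the nesting $w_i < e < -w_{k'} < -w_j$. Lemma~\ref{order-lem}(a), now with $a = w_i$, $b = e$, $c = -w_{k'}$, $d = -w_j$, yields $w_i, w_j, e, w_{k'}$ as a subword of $w_1 w_2 \cdots w_n$, so $i < j < k$, contradicting $k \leq j$.

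The main technical nuisance in both parts is pinning down the strict nesting pattern among the four positive values: this requires using the noncrossing constraint together with the fact that $w$ is a signed permutation (so its one-line representation has no repeated entries, and the blocks of $M$ partition $[\pm n]$) to exclude the degenerate equalities that would collapse the configuration. Once the nested order is established, Lemma~\ref{order-lem}(a) immediately converts it into the positional ordering that produces the contradiction.
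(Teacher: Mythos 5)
Your proof is correct and follows essentially the same route as the paper's: both arguments locate the block of $\sh(w)$ containing $|e|$, use the noncrossing property to force it to nest strictly inside $\{w_i,-w_j\}$, and then invoke Lemma~\ref{order-lem}(a) to pin down the positions. The paper phrases this directly in terms of the block containing $|e|$ rather than splitting on whether the index of $e$ is symmetric or complementary, but the content is identical.
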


\begin{proof}
We have $\{w_i,-w_j\} \in \sh(w)$, so if $e \in \{w_1,w_2,\dots,w_n\}$ and $w_i < |e| < -w_j$,
then the noncrossing matching $\sh(w)$ must contain a block $\{a,b\}$ with $|e| \in \{a,b\}$
and $w_i < a < b < -w_j$, and in this case 
$w_i w_j a \bar b$ must be a subword of $w_1w_2\dots w_n$ by Lemma~\ref{order-lem}(a).
\end{proof}

 \begin{lemma}\label{des1j-lem}
If $1\leq i < j\leq n$ are complementary indices for $w \in \cA_n$ then $ws_{ij} \lessdot w$.
\end{lemma}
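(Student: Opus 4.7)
The plan is to invoke the Bruhat cover criterion Lemma~\ref{gendes-lem}(b) for the element $u := ws_{ij}$, so that the desired relation $ws_{ij} \lessdot w$ translates into $u \lessdot us_{ij}$, and then to verify each hypothesis of Lemma~\ref{gendes-lem}(b) using our interval lemma~\ref{interval-lem}.

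By the definition of $s_{ij}$ we have $u_i = -w_j$, $u_j = -w_i$, and $u_k = w_k$ for $k \notin \{i,j\}$. Under this substitution the numerical hypotheses of Lemma~\ref{gendes-lem}(b)---namely $u_i > 0$ and $0 < -u_j < u_i$---translate exactly into $w_j < 0$ and $0 < w_i < -w_j$, which is the definition of a complementary pair of indices for $w$, so these hold for free.

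It then remains to verify the two interval hypotheses of Lemma~\ref{gendes-lem}(b): first, that if $w_i < e < -w_j$ then $e$ does not appear among $u_1,\dots,u_{i-1} = w_1,\dots,w_{i-1}$; and second, that if $w_j < e < -w_i$ then $e$ does not appear among $u_{i+1},\dots,u_{j-1} = w_{i+1},\dots,w_{j-1}$. In the first case necessarily $e > w_i > 0$, so $0 < w_i < e < -w_j$, and Lemma~\ref{interval-lem}(b) rules out $e$ from the even larger set $\{w_1,\dots,w_j\}$. In the second case necessarily $e < -w_i < 0$, so $w_j < e < -w_i < 0$, and Lemma~\ref{interval-lem}(a) rules out $e$ from $\{w_1,\dots,w_{j+1}\}$. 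Both interval conditions therefore hold.

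The only point requiring care is the bookkeeping that converts a ``descending'' cover relation for $w$ into an ``ascending'' cover relation for $u = ws_{ij}$, together with the accompanying sign flips in translating $u_i$ and $u_j$ back to $w_j$ and $w_i$. Once this is set up, the statement reduces cleanly to the two interval bounds that Lemma~\ref{interval-lem} supplies, and no substantive obstacle remains.
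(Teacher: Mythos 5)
Your proof is correct and takes essentially the same route as the paper, which simply observes that the claim is immediate from Lemma~\ref{gendes-lem}(b) applied to $u = ws_{ij}$ together with Lemma~\ref{interval-lem}. Your write-up just makes explicit the substitution $u_i = -w_j$, $u_j = -w_i$ and the verification of the two interval hypotheses, all of which check out.
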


\begin{proof}
This is immediate from Lemmas~\ref{gendes-lem}(b) and \ref{interval-lem}.
\end{proof}

\section{Quasi-atoms}

Given a word $w=w_1w_2\cdots w_n$
such that $|w_1|$, $|w_2|$, \dots, $|w_n|$ are distinct and nonzero,
define
$\fl_\pm(w) \in C_n$ to be the signed permutation whose one-line representation is formed by replacing each letter of $w$
by its image under the order-preserving bijection $\{ \pm w_1, \pm w_2,\dots, \pm w_n\} \to [\pm n]$.
For example, we have $\fl_\pm(3\bar 25 \bar 7) = 2 \bar 1 3\bar 4 \in C_4$.
If $M$ is a partition of a symmetric $2n$-element subset $X=-X\subset [\pm m]$,
then define $\fl_\pm(M)$ to be the partition of $[\pm n]$ formed by replacing each element of each block of $M$
by its image under the order-preserving bijection $X \to [\pm n]$.

Suppose $w \in C_n$ and $v = \fl_\pm(w_2w_3\cdots w_n) \in \cA_{n-1}$. Define $\shq(w)$
to be the unique perfect matching on $[n]\setminus\{\pm w_1\}$
with $\fl_\pm(\shq(w)) = \sh(v)$.
Since $\sh(v)$ is symmetric and noncrossing, $\shq(w)$ is symmetric and noncrossing.

The matching $\shq(w)$ may be read off directly from the one-line representation of $w$ 
by the following procedure.
Let $w^0,w^1,w^2,\dots,w^l$ be any sequence of words whose
first term is $w^0 =w_2w_3\cdots w_n$ (note the deliberate omission of $w_1$) and whose final term is strictly increasing,
in which $w^i$ for $i>0$ is formed from $w^{i-1}$ by removing a single descent $q_ip_i$.
Let $\{c_1,c_2,\dots,c_k\}$ be the set of letters in $w^l$.
Then $\shq(w)$ is the matching whose blocks consist of
$\{p,-q\}$, $\{-p,q\}$, and $\{\pm c\}$
for each descent $(q,p) \in \{ (q_1,p_1), (q_2,p_2),\dots, (q_l,p_l)\}$ and each $c \in \{ c_1,c_2,\dots,c_k\}$.
This construction is independent
of the choices of descents by Theorem~\ref{ndes-thm}.

\begin{example}
    Let $w = 3 1 6 \bar 7  4  \bar 5 \hs \bar 2$. One sequence of words $w^0, w^1, \ldots, w^l$ as described above is
    \begin{equation*}
        w^0 = 1 \hs6 \hs\bar 7 \hs 4 \hs \bar 5 \hs \bar 2, \quad w^1 = 1 \hs4 \hs\bar 5 \hs \bar 2, \quad w^2 = 1 \hs\bar 2, \quad w^3 = \emptyset,
    \end{equation*}
    so $\shq(w) = \{\pm\{1,2\}, \pm\{4,5\}, \pm\{6,7\} \}$. Setting $v = \fl_{\pm}(w_2 w_3 \cdots w_n)$, we have
    \begin{equation*}
        \alpha_{\min}(M) = 5  \bar 6  3  \bar 4  1  \bar 2 \vartriangleleft_3 5  \bar 6  1  3  \bar 4 \hs \bar 2 \vartriangleleft_1 1  5  \bar 6  3  \bar 4 \hs \bar 2 = v
    \end{equation*}
    where $M = \{\pm\{1,2\},\pm\{3,4\},\pm\{5,6\}\} = \fl_{\pm}(\shq(w))$, so $v \in \cA_M$.

\end{example}

We define $\cA_0$ to be the singleton set containing just the empty word $\emptyset$.

\begin{definition}
An element $w \in C_n$ is a \emph{quasi-atom} if
the following conditions hold:
\begin{itemize}
\item[(a)] One has $w_1>0$ and $\fl_\pm(w_2w_3\cdots w_n) \in \cA_{n-1}$, so $\shq(w)$ is defined.
\item[(b)] At most one 
block $\{a,b\} \in \shq(w)$ has $0<a<w_1<b$.

\item[(c)] No symmetric
block $\{ \pm c \} \in \shq(w)$ has $0<w_1<c$.
\end{itemize}
A quasi-atom $w$ is  \emph{odd}  if no block
$\{a,b\} \in \shq(w)$
exists with $0<a<w_1<b$;
otherwise, $w$ is \emph{even}.
We write $\cQ_n^+$ and $\cQ_n^-$ for the sets of even and odd quasi-atoms in $C_n$,
and define $\cQ_n \omdef = \cQ_n^+ \sqcup \cQ_n^-$.
\end{definition}

\begin{example} 
By convention we have 
\ben
\item[] $\cA_0 = \{\emptyset\}$,
\item[] $\cQ_1^+ = \varnothing$ and $\cQ_1^- = \{1\}$.
\een
In rank two we have:
\ben
\item[] $\cA_1 = \left\{\bar 1\right\}$,
\item[] $\cQ_2^+ = \varnothing$ and $\cQ_2^- = \left\{2 \bar 1\right\}$.
\een
In rank three we have:
\ben
\item[] $\cA_2 = \left\{\bar 2\hs \bar 1, 1  \bar 2\right\}$,
\item[] $\cQ_3^+ = \left\{ 21\bar 3\right\}$ and $\cQ_3^- = \left\{ 3   \bar 2\hs \bar 1, 3   1   \bar 2, 1 2  \bar 3\right\}$.
\een
In rank four we have:
\ben
\item[] $\cA_3 = \left\{ \bar 3 \hs \bar 2 \hs \bar 1, \bar 3  1 \bar 2, 2 \bar3 \hs\bar 1, \bar 1 2 \bar3\right\}$,
\item[] $\cQ_4^+ = \left\{3 2 \bar4\hs \bar 1, 3 \bar1  2 \bar 4\right\}$ and 
$\cQ_4^- = \left\{4 \bar 3\hs \bar 2\hs \bar 1, 4 \bar 3  1 \bar 2, 4 2 \bar 3\hs \bar 1, 4 \bar 1 2  \bar 3, 2 3 \bar 4 \bar 1, 2 \bar 1 3 \bar 4\right\}$.
\een
The sequences of cardinalities
\[
\ba
\( |\cA_n|: n=1,2,3,\dots\) &= (1,\ 2,\ 4,\ 11,\ 30,\ 101,\ 336,\ 1310,\ 5039,\ \dots) \\
\( |\cQ_n^+| : n=1,2,3,\dots\) &= (0,\ 0,\ 1,\ 2,\ 11,\ 30,\ 151,\ 501,\ 2592,\ \dots) \\
\( |\cQ_n^-|: n=1,2,3,\dots\) &= (1,\ 1,\ 3,\ 6,\ 21,\ 57,\ 228,\ 753,\ 3359,\ \dots) \\
\( |\cQ_n|: n=1,2,3,\dots\) &= (1,\ 1,\ 4,\ 8,\ 32,\ 87,\ 379,\ 1254,\ 5951,\ \dots) \\
\ea
\]
do not match any existing entries in \cite{OEIS}.
\end{example}

It can happen that $w \in \cA_n$ and $\fl_\pm(w_2w_3\cdots w_n) \in \cA_{n-1}$,
in which case $\sh(w) $ and $ \shq(w)$ are both defined but unequal. 
For quasi-atoms, however, this ambiguity does not arise:

\begin{proposition}\label{disjoint-prop}
The sets $\cA_n$ and $\cQ_n$ are disjoint.
\end{proposition}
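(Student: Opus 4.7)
The approach is proof by contradiction. Assume $w \in \cA_n \cap \cQ_n$. Since $w \in \cQ_n$ we have $w_1 > 0$, and since $w \in \cA_n$ with $w_1 > 0$, Corollary~\ref{compl-cor} forces position $1$ to be the left endpoint of a complementary pair $(1,j)$ with $j > 1$; the block $\{w_1,-w_j\} \in \sh(w)$ then satisfies $0 < w_1 < -w_j$, so $w_j < 0$ and $|w_j| > w_1$. The plan is to combine Theorem~\ref{ndes-thm} with the definition of $\shq(w)$ to force a symmetric block of $\shq(w)$ that violates condition (c) of the definition of quasi-atom.

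The key computational step executes a descent-removal sequence for $w_1 w_2 \cdots w_n$ that first exhausts every descent contained in the tail $w_2 \cdots w_n$; these are exactly the descent removals that compute $\sh(v)$ for $v = \fl_\pm(w_2 \cdots w_n)$, which lies in $\cA_{n-1}$ by the quasi-atom assumption, and they leave $w_1$ untouched at the front. The resulting state is $w_1 v_1 v_2 \cdots v_p$ with $v_1 < v_2 < \cdots < v_p$ the letters of the tail residue; by definition of $\shq$, these are precisely the letters indexing the symmetric blocks $\{\pm v_i\}$ of $\shq(w)$. At most one descent, namely $(w_1,v_1)$, remains to process, and by Theorem~\ref{ndes-thm} whatever order of removals we have chosen recovers the true $\NDes(w)$ and $\NNeg(w)$.

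Now split on the sign of $w_1 - v_1$. If $w_1 < v_1$ the word $w_1 v_1 \cdots v_p$ is already increasing, so $w_1$ joins the final residue and $\{\pm w_1\} \in \sh(w)$, contradicting $\{w_1,-w_j\} \in \sh(w)$ since $|w_j| \neq w_1$ and blocks of $\sh(w)$ are disjoint. Thus $w_1 > v_1$; the final descent $(w_1,v_1)$ contributes the block $\{w_1,-v_1\} \in \sh(w)$, and uniqueness of the block containing $w_1$ forces $v_1 = w_j < 0$. Consequently $\{\pm v_1\} = \{\pm |w_j|\}$ is a symmetric block of $\shq(w)$ with $|w_j| > w_1 > 0$, directly violating condition (c) and yielding the contradiction. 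The main subtlety I anticipate is justifying that this ``tail first'' strategy is a legitimate descent-removal sequence for $w_1 w_2 \cdots w_n$; this reduces to the observations that every descent of $w_2 \cdots w_n$ is also a descent of the prefixed word, and that Theorem~\ref{ndes-thm} makes $\NDes$ and $\NNeg$ independent of the order of removals, so we may freely reorder.
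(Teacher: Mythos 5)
Your proof is correct and follows essentially the same route as the paper's: the paper's own argument takes $w_1>0$, finds the complementary index $j$ for position $1$, and simply asserts that necessarily $\{\pm w_j\}\in\shq(w)$, contradicting condition (c) of the quasi-atom definition; your tail-first descent-removal argument is precisely the justification of that asserted step via Theorem~\ref{ndes-thm}. (The only cosmetic omission is the degenerate case where the tail residue is empty, so that $v_1$ does not exist, but there $w_1$ itself becomes a residue letter and the same contradiction $\{\pm w_1\}\in\sh(w)$ as in your first case is immediate.)
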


\begin{proof}
Suppose $w \in \cA_n \cap \cQ_n$.
Since $w_1>0$, the index 1 must be complementary to some $j \in [2,n]$,
but then $0<w_1 < -w_j$ and  necessarily $\{\pm w_j\} \in \shq(w)$, contradicting the definition of $\cQ_n$.
\end{proof}

Define $<_\cQ$ as the
transitive closure of the relations $\vartriangleleft_i$ from \eqref{tri-eq} for $i\geq 2$.
This is the partial order
with $v <_\cQ w$ if and only if $v_1=w_1 $ and $v_2v_3\cdots v_n <_\cA w_2w_3\cdots w_n$,
or equivalently the transitive closure of the relation on words with
\[ x \cdots cab\cdots <_\cQ x \cdots bca \cdots\]
whenever $a,b,c,x$ are integers with 
$a<b<c$
and  the corresponding ellipses mask identical subsequences.
Each subset $\cA_M\subset \cA_n$ is a weakly connected component of the Hasse diagram of the partial order $<_\cA$. 
Since whether $w \in \cQ_n$ is even or odd depends only on the matching $\shq(w)$,
it follows that $\cQ_n^+$ and $\cQ_n^-$ 
are each unions of weakly connected components of the Hasse diagram of the partial order $<_\cQ$.

Let $e \in [n]$ and suppose $M$ is a perfect matching on $[\pm n] \setminus \{\pm e\}$
which is symmetric and noncrossing. Assume $M$ has  no blocks $\{\pm c\}$ with $0<e<c$. 
Define $\ENCSM(n,e)$ as the set of such matchings with exactly one block $\{a,b\}$ such that $0 < a<e<b$;
define $\ONCSM(n,e)$ as the set of such matchings with no blocks $\{a,b\}$ such that $0<a<e<b$.
Let 
\[
\ENCSM(n) \omdef= \bigsqcup_{e \in [n]} \ENCSM(n,e)
\qquand
\ONCSM(n) \omdef= \bigsqcup_{e \in [n]} \ONCSM(n,e).
\]
Given $M \in \NCSQ^\pm(n,e)$,
define $\alphaq_{\min}(M) \omdef= u_1u_2u_3\cdots u_n$ and $\alphaq_{\max}(M) \omdef= v_1v_2v_3\cdots v_n$ where 
\[ u_1=v_1=e\qquand u_2u_3\cdots u_n = \alpha_{\min}(M)\qquand v_2v_3\cdots v_n = \alpha_{\max}(M).\]
Finally let $\cQ_M \omdef= \{ w \in C_n : \alphaq_{\min}(M) \leq_\cQ w \leq_\cQ \alphaq_{\max}(M)\}$.

\begin{example}
We have
\[
    \{\pm\{1,7\}, \pm\{2,3\}, \pm\{5,6\}\} \in \ENCSM(7,4)
    \quand
    \{\pm\{2,7\}, \pm\{3,4\}, \pm\{5,6\}\} \in \ONCSM(7,1).
\]
If $M = \{\{\pm 3\}, \pm\{1,2\}, \pm\{4,8\}, \pm\{6,7\}\} \in \ENCSM(8,5)$, then
\[
    \alpha_{\min}(M) = 5 \hs 4 \hs \bar 8 \hs 6 \hs \bar 7 \hs \bar 3 \hs 1 \hs \bar 2 \qquad \text{and} \qquad \alpha_{\max}(M) = 5 \hs \bar 3 \hs 1 \hs \bar 2 \hs 4 \hs \bar 8 \hs 6 \hs \bar 7.
\]
\end{example}

\begin{proposition}
Relative to $<_\cQ$, there are disjoint poset decompositions
\[\cQ_n^+ = \bigsqcup_{M \in \ENCSM(n)} \cQ_M
\qquand
\cQ_n^- = \bigsqcup_{M \in \ONCSM(n)} \cQ_M.
\]
In particular, each $x \in \cQ^\pm_n$ belongs to $\cA^\pm_M$ for a unique $M \in \NCSQ^\pm(n)$,
and if $x\in \cQ_M$ and $y \in \cQ_N$ for $M,N \in \NCSQ^\pm(n)$, then $x\leq_\cQ y$ only if $M=N$.
\end{proposition}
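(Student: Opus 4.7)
The plan is to reduce the statement to Theorem~\ref{interval-thm} by slicing $\cQ_n$ according to the first letter and pulling back through the flattening map $w \mapsto \fl_\pm(w_2 w_3 \cdots w_n)$. The underlying observation is that $<_\cQ$, being the transitive closure of the relations $\vartriangleleft_i$ for $i \geq 2$, never alters $w_1$. Thus for each $e \in [n]$ the slice $\cQ_n^{(e)} \omdef= \{w \in \cQ_n : w_1 = e\}$ is automatically $<_\cQ$-closed and incomparable to the other slices, so it suffices to describe each slice separately.

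Within a slice $\cQ_n^{(e)}$, I will work with the map $\Psi(w) \omdef= \fl_\pm(w_2 w_3 \cdots w_n)$. Condition (a) in the definition of a quasi-atom says exactly that $\Psi$ takes values in $\cA_{n-1}$, and $\Psi$ is injective on the slice because $w$ is recovered from $\Psi(w)$ by prepending $e$ and applying the order-preserving bijection $[\pm(n-1)] \to [\pm n] \setminus \{\pm e\}$ to each letter. A direct check shows that $u \vartriangleleft_i w$ for $i \geq 2$ with $u_1 = w_1 = e$ corresponds to $\Psi(u) \vartriangleleft_{i-1} \Psi(w)$, since flattening preserves relative order among the three affected entries. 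Moreover $\shq(w)$ is by construction the unique symmetric noncrossing matching on $[\pm n]\setminus\{\pm e\}$ whose flattening is $\sh(\Psi(w))$. Conditions (b) and (c) of the quasi-atom definition then translate literally to the defining conditions of $\NCSQ(n,e) = \ENCSM(n,e) \sqcup \ONCSM(n,e)$, with the even/odd dichotomy of $w$ mirroring the $\ENCSM$ versus $\ONCSM$ split.

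I will then apply Theorem~\ref{interval-thm} to decompose $\cA_{n-1} = \bigsqcup_{N \in \NCSM(n-1)} \cA_N$. Pulling back through $\Psi$, each $\cA_N$ whose unflattening $M$ on $[\pm n]\setminus\{\pm e\}$ lies in $\NCSQ^\pm(n,e)$ yields a $<_\cQ$-interval in $\cQ_n^{(e)}$ which, by the definitions of $\alphaq_{\min}$ and $\alphaq_{\max}$, is precisely $\cQ_M$; matchings outside $\NCSQ^\pm(n,e)$ yield preimages outside $\cQ_n$ by the failure of (b) or (c). Combining the slices over $e \in [n]$ and separating even from odd gives the asserted decompositions of $\cQ_n^+$ and $\cQ_n^-$, and the uniqueness clauses follow immediately from disjointness since each $x \in \cQ_n^\pm$ determines $e = x_1$ and $M = \shq(x)$.

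The step that warrants the most care is verifying the poset isomorphism $(\cQ_n^{(e)}, <_\cQ) \cong (\Psi(\cQ_n^{(e)}), <_\cA)$: one must confirm both directions of the intertwining of $\Psi$ with the covering relations $\vartriangleleft_i$ and match the quasi-atom conditions (b) and (c) cleanly with $\NCSQ^\pm(n,e)$-membership of the unflattened matching. Once that is in place, the rest is bookkeeping driven by the definitions together with the existing Theorem~\ref{interval-thm}.
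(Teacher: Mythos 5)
Your proposal is correct and follows essentially the same route as the paper: the paper's one-line proof ("$\cQ_M = \{w \in \cQ_n : \shq(w) = M\}$") rests on exactly the observations you spell out, namely that $<_\cQ$ fixes the first letter and agrees with $<_\cA$ on the flattened remainder, so that Theorem~\ref{interval-thm} applied to $\cA_{n-1}$ pulls back to the decomposition by $\shq$, with conditions (b) and (c) of the quasi-atom definition matching membership in $\ENCSM(n,e)$ versus $\ONCSM(n,e)$. Your writeup is just a more explicit version of the same argument.
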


\begin{proof}
This is clear since if $M \in \NCSQ^\pm(n)$ then $\cQ_M = \{ w \in \cQ_n : \shq(w) = M\}$.
\end{proof}

Let $w \in \cQ_n$ be a quasi-atom.
Mimicking our terminology in the previous section,
define indices $2\leq i < j \leq n$ to be \emph{complementary} in $w$ 
if $0 < w_i < -w_j$ and $\{w_i,-w_j\} \in \shq(w)$,
and define an index $2 \leq i \leq n$ to be \emph{symmetric} for $w$ if $w_i < 0$ and $\{\pm w_i\} \in \shq(w)$.
In view of Proposition~\ref{disjoint-prop}, there is no risk of these notions conflicting with our earlier definitions for atoms.

With minor changes, the technical properties of atoms in the previous section remain true for quasi-atoms.
The following summarizes the main facts we will need.
\begin{lemma}
\label{qinterval-lem}
Consider a quasi-atom $w \in \cQ_n$.
\ben
\item[(a)] Each index $i \in [2,n]$ is symmetric or part of a complementary pair for $w$.
\item[(b)] If $w_i>w_{i+1}$ for some $i \in [2,n-1]$ then $0<w_i < -w_{i+1}$.
\item[(c)] Suppose $0<a<b<c<d$.
\ben
\item[i.] If $\{a,d\},\{b,c\} \in \shq(w)$ then $a\bar d b\bar c$ is a subword of $w_2w_3\cdots w_n$.
\item[ii.] If $\{a,b\},\{\pm c\} \in \shq(w)$ then $\bar c a \bar b$ is a subword of $w_2w_3\cdots w_n$.
\item[iii.] If $\{\pm a\},\{\pm b\} \in \shq(w)$ then $\bar b \bar a$ is a subword of $w_2w_3\cdots w_n$.
\een
\item[(d)] Suppose $2\leq i < j\leq n$ are complementary indices for $w$ and $e \in [\pm n]$.
\ben
\item[i.] If $w_j < e < -w_i<0$ then $e \notin \{ w_2,w_3,\dots,w_j,w_{j+1}\}$.
\item[ii.] If  $0<w_i < e < -w_j$ then $e \notin \{ w_2,w_3,\dots,w_{j}\}$.
\een
\een
\end{lemma}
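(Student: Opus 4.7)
The unifying strategy is to \emph{flatten to the atom case}. By definition of a quasi-atom, the signed permutation $v := \fl_\pm(w_2 w_3 \cdots w_n)$ belongs to $\cA_{n-1}$, and $\shq(w)$ is the pullback of $\sh(v)$ along the order-preserving bijection
\[
\psi : \{\pm w_2, \pm w_3, \dots, \pm w_n\} \longrightarrow [\pm(n-1)]
\]
used to define $\fl_\pm$. In particular, $\psi$ maps symmetric blocks of $\shq(w)$ to symmetric blocks of $\sh(v)$ and paired blocks to paired blocks, and $\psi$ preserves the sign of every element (positivity, negativity, and zero-crossings are preserved because $\psi$ is order-preserving and sign-equivariant in the sense that $\psi(-a) = -\psi(a)$). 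Each assertion (a)--(d) concerns only the subword $w_2 w_3 \cdots w_n$, never $w_1$, so it translates immediately into the corresponding statement about $v$.

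More concretely, for part (a) I apply Corollary~\ref{compl-cor} to $v$: each $i' \in [n-1]$ is either symmetric or part of a complementary pair for $v$; pulling back via $\psi$ shows that each $i \in [2,n]$ is symmetric or part of a complementary pair for $w$. For part (b), a descent $w_i > w_{i+1}$ with $i \in [2,n-1]$ corresponds under $\psi$ to a descent $v_{i-1} > v_i$, and Corollary~\ref{descent-cor} applied to $v$ gives $0 < v_{i-1} < -v_i$ with $\{v_{i-1}, -v_i\} \in \sh(v)$; applying $\psi^{-1}$ yields $0 < w_i < -w_{i+1}$. Part (c) follows the same way: the three subword patterns are invariant under an order-preserving, sign-equivariant relabeling, so Lemma~\ref{order-lem} applied to $v$ (with $\psi(a), \psi(b), \psi(c), \psi(d)$) transports back to the stated subword conclusions for $w_2 w_3 \cdots w_n$. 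Part (d) reduces in the same fashion to Lemma~\ref{interval-lem}: complementary indices $i < j$ in $[2,n]$ for $w$ correspond to complementary indices $i-1 < j-1$ for $v$.

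The only mild subtlety is that in (d) the element $e$ ranges over all of $[\pm n]$, while $\psi$ is only defined on the smaller set $\{\pm w_2, \dots, \pm w_n\}$. This is harmless: if $e$ lies outside that set, then automatically $e \notin \{w_2, w_3, \dots, w_{j+1}\}$ and the conclusion holds trivially; and if $e$ lies inside that set, then $\psi(e)$ is a legitimate input to the atom-level Lemma~\ref{interval-lem} and the hypotheses on signs and inequalities for $\psi(e)$ follow from those on $e$ by order- and sign-preservation of $\psi$. I do not anticipate any genuine obstacle beyond this bookkeeping; the content of the lemma is essentially that the atom theory of Section~\ref{atoms-sect} is stable under ``prepending $w_1$'' once the definition of quasi-atom is arranged precisely so that this stability holds.
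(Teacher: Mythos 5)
Your proposal is correct and is essentially identical to the paper's own (very brief) proof: the paper likewise observes that since $\fl_\pm(w_2w_3\cdots w_n)\in\cA_{n-1}$ and $\shq(w)$ is the pullback of its shape, parts (a)--(d) just restate Corollary~\ref{compl-cor}, Corollary~\ref{descent-cor}, Lemma~\ref{order-lem}, and Lemma~\ref{interval-lem}. Your extra remark about $e$ lying outside the image of the flattening map is harmless bookkeeping that the paper leaves implicit.
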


\begin{proof}
Since $\fl_\pm(w_2w_3\cdots w_n)$ is required to belong to $\cA_M$
for some matching $M \in \NCSM(n-1)$,
and since $\shq(w)$ is defined to be the matching on $[\pm n]\setminus \{\pm w_1\}$
with $\fl_\pm(\shq(w)) = M$,
these properties
just restate 
Corollary~\ref{descent-cor},
Lemma~\ref{order-lem},
Corollary~\ref{compl-cor},
and Lemma~\ref{interval-lem}.
\end{proof}

%
%

%



If $w \in \cQ_n^+$ is an even quasi-atom
then there exists a unique pair of complementary indices $2\leq i < j \leq n$ with $0<w_i < w_1 < -w_j$.
We call these the \emph{distinguished indices} of $w$.

\begin{corollary}\label{dist-lem}
Suppose $ 2\leq i<j \leq n$ are the distinguished indices of an even quasi-atom $w \in \cQ_n$.
Then  $wt_{1i} \lessdot w$ and $ws_{1j} \lessdot w$.
\end{corollary}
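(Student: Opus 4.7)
The plan is to verify both covering relations directly from Lemma~\ref{des-lem}, using Lemma~\ref{qinterval-lem}(d) (which controls which values can appear in the one-line representation of a quasi-atom to the left of a complementary pair) to rule out the forbidden intermediate letters. Since the distinguished indices $i < j$ are by definition complementary for $w$ with $0 < w_i < -w_j$, and additionally satisfy $w_i < w_1 < -w_j$, the hypotheses of Lemma~\ref{qinterval-lem}(d) are immediately available.

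For $wt_{1i} \lessdot w$, I appeal to Lemma~\ref{des-lem}(c): the required inequality $w_1 > w_i$ follows from $0 < w_i < w_1$, so it suffices to show that no entry $e \in \{w_2, w_3, \ldots, w_{i-1}\}$ lies strictly between $w_i$ and $w_1$ in value. Given such an $e$, the chain $0 < w_i < e < w_1 < -w_j$ places $e$ strictly between $w_i$ and $-w_j$ with $0 < w_i$, and Lemma~\ref{qinterval-lem}(d)(ii) applied to the complementary pair $(i,j)$ yields $e \notin \{w_2, \ldots, w_j\}$, contradicting $e \in \{w_2, \ldots, w_{i-1}\}$.

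For $ws_{1j} \lessdot w$, I appeal to Lemma~\ref{des-lem}(b): the required inequality $w_1 < -w_j$ is part of the definition of the distinguished indices, so it suffices to show that no entry $e \in \{w_2, w_3, \ldots, w_{j-1}\}$ satisfies $w_j < e < -w_1$. Given such an $e$, we have $w_j < e < -w_1 < -w_i < 0$ because $w_i < w_1$, hence $w_j < e < -w_i$ with $-w_i < 0$, and Lemma~\ref{qinterval-lem}(d)(i) applied to the complementary pair $(i,j)$ yields $e \notin \{w_2, \ldots, w_j, w_{j+1}\}$, contradicting $e \in \{w_2, \ldots, w_{j-1}\}$.

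Neither step should be the main obstacle; the work has been absorbed into the earlier combinatorial lemmas about quasi-atoms. The only point requiring any care is to convert the ambient constraint $w_i < w_1 < -w_j$ satisfied by the distinguished pair into the precise inequalities on $e$ required by Lemma~\ref{qinterval-lem}(d)(i)--(ii), which is straightforward monotonicity manipulation.
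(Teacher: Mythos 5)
Your proposal is correct and follows exactly the paper's approach: the paper's proof is the one-line statement that the corollary is immediate from Lemma~\ref{des-lem} and Lemma~\ref{qinterval-lem}(d), and you have simply written out the routine inequality-chasing that this citation leaves implicit. The details check out (in particular the reductions $0<w_i<e<w_1<-w_j$ for part (ii) and $w_j<e<-w_1<-w_i<0$ for part (i) are the right ones).
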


\begin{proof}
This is immediate from Lemmas~\ref{des-lem} and \ref{qinterval-lem}(d).
%
%
\end{proof}

\section{Transition graphs}\label{graph-sect}

We define a directed bipartite graph $\QQQ$ with vertex set $\cA_n \sqcup \cQ_n$.
We use the letter $\cL$ to denote this graph since it will later serve as one ``layer'' in a larger
graph of interest.
Each edge in $\QQQ$ will pass 
either from an
even quasi-atom to an odd quasi-atom, from an odd quasi-atom to an even quasi-atom,
or from an odd quasi-atom to an atom.
The atoms of $w^C_n=\wB$ will each have a unique incoming edge,
and all even quasi-atoms will have one incoming and one outgoing edge.
These properties will not be immediately clear from the following definition.

First suppose 
$v \in \cQ_n^+$ is an even quasi-atom.
Let $b=v_1>0$ and suppose $\{a,c\} \in \shq(v)$ is the unique block with $0<a<b<c$.
Let
 $2 \leq i < j\leq n$ be
the distinguished indices with $a=v_i$ and $c=-v_j$.
In $\QQQ$,
we define $v$ to have
 a unique incoming edge $u\to v$ 
 where
\be\label{u-eq} u =  vs_{1j} =t_{bc}v = \bar{v_j}v_2\cdots v_{j-1} \bar{v_1} v_j \cdots v_n
\ee
and a unique outgoing edge $v \to w$ where
\be\label{w-eq}
w =vt_{1i} = t_{ab} v=  v_i v_2 \cdots v_{i-1}v_1 v_{i+1} \cdots v_n.\ee
Next suppose $v \in \cA_n$. If $v_1 < 0$
then let
\be
\label{u1-eq}
u = vt_0 = \bar{v_1}   v_2\cdots v_n.
\ee
If  $v_1> 0$
and  $j \in[2,n]$ is the unique index with $\{v_1,-v_j\} \in \sh(v)$, then let
\be
\label{u2-eq}
u = vs_{1j} = t_{bc}v=   \bar{v_j} v_2\cdots v_{j-1}  \bar{v_1}   v_{j+1} \cdots v_n
\ee
where $b = v_1<-v_j=c$.
We define $v$ to have a single incoming edge $u \to v$ in $\QQQ$.
Figure~\ref{fig1} 
shows $\QQQ$ for $n=1,2,3,4$
and Figure~\ref{fig2}
shows a part of $\ora{\cL_5}$.

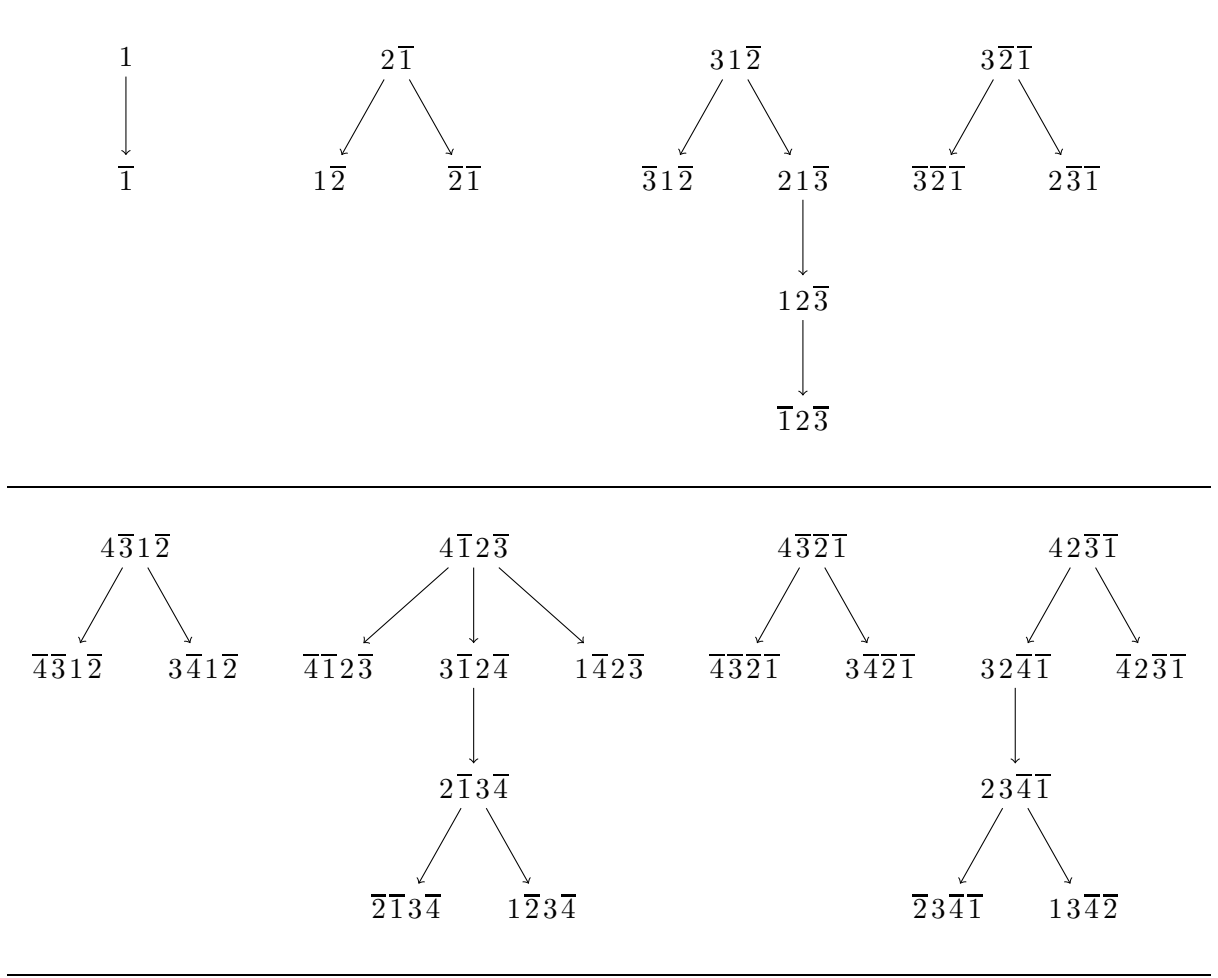
\begin{figure}[h]
\begin{center}
\begin{tabular}{c}
\hline
\\
\begin{tikzpicture}[xscale=0.9,yscale=1.6]
\node (a) at (0,0) {$1$};
\node (b) at (0,-1) {$\bar 1$};
\draw [->] (a) -- (b);
\node (2node2) at (4,0) {$2\hs \bar1$};
\node (2node1) at (3,-1) {$1\hs \bar2$};
\node (2node0) at (5,-1) {$\bar2\hs \bar1$};
\draw [->] (2node2) -- (2node0);
\draw [->] (2node2) -- (2node1);
\node (3node6) at (13,0) {$3\hs \bar2\hs \bar1$};
\node (3node0) at (12,-1) {$\bar3\hs \bar2\hs \bar1$};
\node (3node4) at (14,-1) {$2\hs \bar3\hs \bar1$};
\node (3node7) at (9,0) {$3\hs 1\hs \bar2$};
\node (3node1) at (8,-1) {$\bar3\hs 1\hs \bar2$};
\node (3node5) at (10, -1){$2\hs 1\hs \bar3$};
\node (3node3) at (10,-2) {$1\hs 2\hs \bar3$};
\node (3node2) at (10,-3) {$\bar1\hs 2\hs \bar3$};
\draw [->] (3node6) -- (3node0);
\draw [->] (3node6) -- (3node4);
\draw [->] (3node7) -- (3node1);
\draw [->] (3node7) -- (3node5);
\draw [->] (3node5) -- (3node3);
\draw [->] (3node3) -- (3node2);
\end{tikzpicture}
\\
\\\hline
\\
\begin{tikzpicture}[xscale=0.9,yscale=1.6]
\node (4a2) at (0,0) {$4\hs \bar3\hs 1\hs \bar2$};
\node (4b3) at (-1,-1) {$\bar4\hs \bar3\hs 1\hs \bar2$};
\node (4b4) at (1,-1) {$3\hs \bar4\hs 1\hs \bar2$};
\node (4a4) at (5,0) {$4\hs \bar1\hs 2\hs \bar3$};
\node (4b7) at (3,-1) {$\bar4\hs \bar1\hs 2\hs \bar3$};
\node (4b8) at (7,-1) {$1\hs \bar4\hs 2\hs \bar3$};
\node (4b9) at (5,-1) {$3\hs \bar1\hs 2\hs \bar4$};
\node (4c1) at (5,-2) {$2\hs \bar1\hs 3\hs \bar4$};
\node (4d1) at (4,-3) {$\bar2\hs \bar1\hs 3\hs \bar4$};
\node (4d2) at (6,-3) {$1\hs \bar2\hs 3\hs \bar4$};
\node (4a1) at (10,0) {$4\hs \bar3\hs \bar2\hs \bar1$};
\node (4b1) at (9,-1) {$\bar4\hs \bar3\hs \bar2\hs \bar1$};
\node (4b2) at (11,-1) {$3\hs \bar4\hs \bar2\hs \bar1$};
\node (4a3) at (14,0) {$4\hs 2\hs \bar3\hs \bar1$};
\node (4b5) at (15,-1) {$\bar4\hs 2\hs \bar3\hs \bar1$};
\node (4b6) at (13,-1) {$3\hs 2\hs \bar4\hs \bar1$};
\node (4c2) at (13,-2) {$2\hs 3\hs \bar4\hs \bar1$};
\node (4d3) at (12,-3) {$\bar2\hs 3\hs \bar4\hs \bar1$};
\node (4d4) at (14,-3) {$1\hs 3\hs \bar4\hs \bar2$};
\draw [->] (4a1) -- (4b1);
\draw [->] (4a1) -- (4b2);
\draw [->] (4a2) -- (4b3);
\draw [->] (4a2) -- (4b4);
\draw [->] (4a3) -- (4b5);
\draw [->] (4a3) -- (4b6);
\draw [->] (4a4) -- (4b7);
\draw [->] (4a4) -- (4b8);
\draw [->] (4a4) -- (4b9);
\draw [->] (4b6) -- (4c2);
\draw [->] (4c2) -- (4d3);
\draw [->] (4c2) -- (4d4);
\draw [->] (4b9) -- (4c1);
\draw [->] (4c1) -- (4d1);
\draw [->] (4c1) -- (4d2);
\end{tikzpicture}
\\
\\ \hline
\end{tabular}
\end{center}
\caption{The graphs $\QQQ$ for $n=1,2,3,4$. A vertex belongs to $\cA_n$ if it has no outgoing edges.
A vertex with an outgoing edge belongs
to $\cQ_n^+$ if it is in the top or third row, and to $\cQ_n^-$ otherwise.
}
\label{fig1}
 \end{figure}

\begin{figure}[h]
\begin{center}
\begin{tikzpicture}[xscale=1.2,yscale=1.5]
\node (a1) at (1,6) {$5\hs 3\hs \bar4\hs 1\hs \bar2$};
\node (b1) at (0,5) {$\bar5\hs 3\hs \bar4\hs 1\hs \bar2$};
\node (b2) at (2,5) {$4\hs 3\hs \bar5\hs 1\hs \bar2$};
\node (c1) at (2,4) {$3\hs 4\hs \bar5\hs 1\hs \bar2$};
\node (c2) at (6,4) {$5\hs 1\hs \bar4\hs 2\hs \bar3$};
\node (d1) at (1,3) {$\bar3\hs4\hs\bar5\hs1\hs\bar2$};
\node (d2) at (3,3) {$2\hs4\hs\bar5\hs1\hs\bar3$};
\node (d3) at (5,3) {$4\hs1\hs\bar5\hs2\hs\bar3$};
\node (d4) at (7,3) {$\bar5\hs1\hs\bar4\hs2\hs\bar3$};
\node (e1) at (4,2) {$1\hs4\hs\bar5\hs2\hs\bar3$};
\node (f1) at (4,1) {$\bar1\hs4\hs\bar5\hs2\hs\bar3$};
\draw [->] (a1) -- (b1);
\draw [->] (a1) -- (b2);
\draw [->] (b2) -- (c1);
\draw [->] (c1) -- (d1);
\draw [->] (c1) -- (d2);
\draw [->] (c2) -- (d3);
\draw [->] (c2) -- (d4);
\draw [->] (d2) -- (e1);
\draw [->] (d3) -- (e1);
\draw [->] (e1) -- (f1);
\end{tikzpicture}
\end{center}
\caption{The most interesting connected component of $\ora{\cL_5}$.
As demonstrated by this example, the graphs $\QQQ$ are not always directed forests.}
\label{fig2}
 \end{figure}
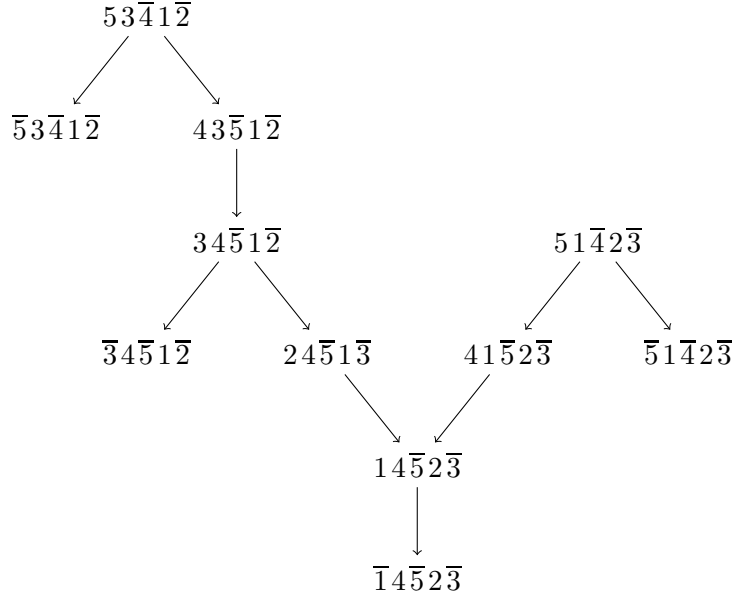

\begin{lemma}\label{prec-lem}
Let $v,v' \in \cA_n \sqcup \cQ_n^+$ and $i \in [2,n-2]$.
Define $\vartriangleleft_i $ as in \eqref{tri-eq}.
\ben
\item[(a)]  If 
 $u\to v$ is an edge in $\QQQ$ and $v\vartriangleleft_i v'$, then
 there is an edge $u' \to v'$ in $\QQQ$ with
  $u \vartriangleleft_i u'$.

\item[(b)]  If $v \to w$ is an edge in $\QQQ$ 
and $v'\vartriangleleft_i v$, then 
there is an edge $v' \to w'$ in $\QQQ$ with
$w' \vartriangleleft_i w$.
\een
\end{lemma}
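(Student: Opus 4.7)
The plan is to verify both parts by case analysis, exploiting the fact that $\vartriangleleft_i$ with $i \geq 2$ preserves both position $1$ and the underlying matching structure. First I will observe that if $v, v' \in \cA_n \sqcup \cQ_n^+$ satisfy $v \vartriangleleft_i v'$ or $v' \vartriangleleft_i v$ with $i \geq 2$, then $v_1 = v'_1$; and since $\cA_n$, $\cQ_n^+$, $\cQ_n^-$ are each unions of connected components of $<_\cA$ or $<_\cQ$ (by Theorem~\ref{interval-thm} and the analogous decomposition of $\cQ_n^\pm$ into the intervals $\cQ_M$), either both $v, v'$ lie in $\cA_n$ or both lie in $\cQ_n^+$, with $\sh(v) = \sh(v')$ or $\shq(v) = \shq(v')$. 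This means the data determining the edge from $v$---the sign of $v_1$, the position of $-v_1$ in $v$ (for atoms with $v_1 > 0$), or the straddling block $\{a,c\} \in \shq(v)$ satisfying $0 < a < v_1 < c$ (for even quasi-atoms)---is inherited by $v'$, so running the same edge construction from $v'$ yields a natural candidate $u'$ for part (a) or $w'$ for part (b) along with the required incidence in $\QQQ$.

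For part (a) I will set $u' = v't_0$ or $u' = v's_{1j'}$ according to the type of $v$, where $j'$ is the partner position in $v'$ (the position in $v'$ of $-v_1$ or of $-c$). This $j'$ equals the analogous partner $j$ in $v$ when $j \notin \{i, i+1, i+2\}$, and otherwise is obtained from $j$ by the cyclic relabeling $i \mapsto i+1 \mapsto i+2 \mapsto i$ induced by the braid. The verification of $u \vartriangleleft_i u'$ then reduces to comparing triples at positions $i, i+1, i+2$: outside this window $u$ and $u'$ agree automatically, and at position $1$ one has $u_1 = -v_j = -v'_{j'} = u'_1$. When $j \notin \{i, i+1, i+2\}$ the triples inherit the braid pattern of $v, v'$ directly; when $j$ lies in the window, one entry of each triple is replaced by $-v_1$, and the inequalities $A < B < C$ required for $\vartriangleleft_i$ reduce to showing that the remaining braid letters avoid forbidden intervals such as $(-c,-a)$ or $(a,c)$---precisely what Lemma~\ref{qinterval-lem}(d) (or Lemma~\ref{interval-lem} for atoms) provides. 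The atom subcase $v_1 < 0$ is trivial, since $u = vt_0$ only modifies position $1$.

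Part (b) will follow by the same template. Since only even quasi-atoms in $\cA_n \sqcup \cQ_n^+$ have outgoing edges, I may assume $v \in \cQ_n^+$; then $w = vt_{1p}$ where $p$ is the distinguished small index, and I set $w' = v't_{1p'}$ with $p'$ shifted by the inverse cyclic relabeling $i \mapsto i+2 \mapsto i+1 \mapsto i$ when $p \in \{i, i+1, i+2\}$. The check $w' \vartriangleleft_i w$ proceeds exactly as in part (a), with the needed inequalities once again supplied by Lemma~\ref{qinterval-lem}(d).

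The hardest step will be handling the overlap sub-cases, in which one or both of the distinguished indices $(p, q)$ of an even quasi-atom $v$ lie inside the braid window $\{i, i+1, i+2\}$. Several configurations are immediately impossible by sign considerations---for instance, in part (a) the pair $(p, q) = (i+1, i+2)$ would force the positive value $v_p = a$ to satisfy $a < -c < 0$---and the more delicate configuration $(p, q) = (i, i+2)$ turns out to be vacuous as well: after the braid move the letter $a$ would sit at a position strictly greater than that of $-c$ in $v'$, so $v'$ would lack a valid distinguished pair, contradicting $v' \in \cQ_n^+$ as guaranteed by the connected-component argument. The surviving sub-cases (including $(p, q) = (i, i+1)$ for part (a) and $(p, q) = (i+1, i+2)$ for part (b), together with all cases where only one of $p, q$ lies in the window) are dispatched by the explicit triple computations described above.
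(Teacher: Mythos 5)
Your proposal is correct and follows essentially the same route as the paper's proof: reduce to the cases $v,v'\in\cA_n$ or $v,v'\in\cQ_n^+$ (same shape, hence the same edge construction), observe that $u$ and $u'$ (resp.\ $w$ and $w'$) differ from $v$ and $v'$ by the same single value replacement so the check localizes to the window $\{i,i+1,i+2\}$, and rule out the one genuinely possible failure pattern in each case via the forbidden-interval statements of Lemma~\ref{interval-lem} and Lemma~\ref{qinterval-lem}(d). The only difference is organizational: the paper tracks the replaced \emph{value} ($\bar c\mapsto\bar b$ or $a\mapsto b$) rather than the shifting distinguished \emph{positions}, which collapses your overlap sub-cases (only the index carrying that value matters for each part) into a single short computation.
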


\begin{proof}
First suppose $v,v' \in\cA_n$ and $v\vartriangleleft_i v'$.
Let $u\to v$ and $u'\to v '$ be the unique edges incident to $v$ and $v'$ in $\QQQ$.
If $v_1<0$ then $u = vt_0$ and $u'=v't_0$ and clearly $u\vartriangleleft_i u'$.
Assume $b=v_1 > 0$ and suppose $\{b,c\} \in \sh(v)$
is the unique block with $0<b<c$,
so that $u = t_{bc} v$ and $u' = t_{bc} v'$.
We can write $v_iv_{i+1}v_{i+2} = zxy$ where $x<y<z$.
By Corollary~\ref{descent-cor}, it must hold that $0 < z < -x$.
Since $u_2u_3\cdots u_n$ is given by replacing $\bar c$ by $ \bar b$ in $v_2v_3\cdots v_n$,
the only way we can fail to have $u \vartriangleleft_i u'$ 
is if $v_i = - c$ and $- c < v_{i+1} < - b$.
But this is impossible by Lemma~\ref{interval-lem}(a).

Now let $v,v' \in \cQ_n^+$.
Suppose $b = v_1$
and $\{a,c\} \in \shq(v)$ is the unique block with $0<a<b<c$.
Set
  $u = t_{bc}v$ and $w= t_{ab}v$
  so that 
$u\to v$ and $v \to w$ are the edges incident to $v$ in $\QQQ$.

Suppose $v\vartriangleleft_i v'$ and  
  $u' = t_{bc}v'$
so that $u'\to v'$ is an edge in $\QQQ$. Our argument is similar to the previous case.
We can write $v_iv_{i+1}v_{i+2} = zxy$ where $x<y<z$,
and $0 < z < -x$ by Lemma~\ref{qinterval-lem}(b).
Since $u_2u_3\cdots u_n$ is given by replacing $\bar c$ by $\bar b$ in $v_2v_3\cdots v_n$,
the only way we can fail to have $u \vartriangleleft_i u'$ 
is if $v_{i+1}= - c$ and $- c < v_{i+2} < - b$.
But this is impossible by Lemma~\ref{qinterval-lem}(d).

Finally suppose $v'\vartriangleleft_i v$ and
$w' = t_{ab}v'$ so that $v' \to w'$ is an edge in $\QQQ$.
We can write $v_iv_{i+1}v_{i+2} = yzx$ where $x<y<z$,
and $0 < z < -x$ by Lemma~\ref{qinterval-lem}(b).
Since $w_2w_3\cdots w_n$ is given by replacing $a$ by $ b$ in $v_2v_3\cdots v_n$,
the only way we can fail to have $w' \vartriangleleft_i w$ 
is if $v_i = a$ and $a<v_{i+1}<b$.
But this is impossible by Lemma~\ref{qinterval-lem}(d).
\end{proof}

\begin{lemma}\label{chop-lem}
Let $M \in \NCSM(n)$ and $v \in \cA_M$.
 If $v_1<0$ then define $M'$
by removing $\{\pm v_1\}$ from $M$.
Otherwise $-v_1$ and $ v_1$ belong to distinct blocks in $M$, and we  define $M'$ by removing these blocks
and then adding $\{ \pm v_1\}$.
In either case we have $M' \in \ONCSM(n)$.
If $v$  is maximal with respect to $<_\cQ$ then $\alphaq_{\max}(M') \to v$ is the unique edge incident to $v$ in $\QQQ$.
\end{lemma}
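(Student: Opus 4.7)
The plan is to prove the two assertions separately, handling Case 1 ($v_1 < 0$) and Case 2 ($v_1 > 0$) in parallel. In Case 1, the remark after Corollary~\ref{compl-cor} guarantees $\{\pm v_1\} \in M$; set $e = -v_1 > 0$. In Case 2, Corollary~\ref{compl-cor} provides a complementary index $j$ with $\{v_1,-v_j\} \in M$, and hence the distinct block $\{-v_1,v_j\} \in M$ by symmetry; set $e = -v_j > v_1 > 0$. In both cases $M'$ is a symmetric matching on $[\pm n]\setminus\{\pm e\}$.

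For $M' \in \ONCSM(n)$, I would verify noncrossing, absence of blocks $\{\pm c\}$ with $c > e$, and absence of blocks $\{a,b\}$ with $0 < a < e < b$. Noncrossing is immediate in Case 1; in Case 2 I would rule out a crossing of the new block $\{\pm v_1\}$ with some $\{p, q\} \in M'$ satisfying $-v_1 < p < v_1 < q$ by splitting on the sign of $p$. Using the fact that a block in a symmetric noncrossing matching has elements of the same sign or is symmetric of the form $\{\pm r\}$, each subcase either reproduces a crossing already forbidden in $M$ by $\{v_1,-v_j\}$ or $\{-v_1,v_j\}$, or reduces to a positive block $\{p,q\}$ with $0 < p < v_1 < -v_j < q$; in this last situation Lemma~\ref{order-lem}(a) assigns the subword $p,-q,v_1,v_j$ to $v$, forcing $p$ to appear before $v_1$---impossible since $v_1$ is the first letter of $v$. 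The two other prohibitions are handled by the same impossible-subword philosophy, using Lemma~\ref{order-lem}(c) and the noncrossing of $M$ against $\{\pm e\}$ in Case 1, and Lemma~\ref{order-lem}(a,b) against $\{v_1, -v_j\}$ in Case 2.

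For the edge identification, $v \in \cA_n$ has exactly one incident edge $u \to v$ in $\QQQ$ (with $u = v t_0$ in Case 1 and $u = v s_{1j}$ in Case 2), so uniqueness is automatic and the content is $u = \alphaq_{\max}(M')$. Both words begin with $e$, so the claim reduces to $u_2 u_3 \cdots u_n = \alpha_{\max}(M')$. In Case 1, Corollary~\ref{descent-cor} prevents $v_1 < 0$ from being the first entry of any descent, so the descent-removal process for $v$ leaves $v_1$ untouched and coincides with that for $v_2 \cdots v_n$ (with $v_1$ as an additional stuck letter); Theorem~\ref{ndes-thm} then identifies $\fl_\pm(v_2 \cdots v_n) \in \cA_{\fl_\pm(M')}$, and since $\vartriangleleft_i$ on $v$ for $i \geq 2$ coincides with $\vartriangleleft_{i-1}$ on the tail, maximality of $v$ under $<_\cQ$ yields $v_2 \cdots v_n = \alpha_{\max}(M')$. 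In Case 2, the crux is to show the distinguished index satisfies $j = 2$: if $j \geq 3$, Lemma~\ref{interval-lem} restricts each $v_i$ for $i \in [2,j-1]$ to $|v_i| < v_1$ or $|v_i| \geq -v_j$, and I would identify a triple $v_iv_{i+1}v_{i+2}$ in this range realizing the $cab$-pattern, contradicting maximality (as in the example $1\hs 5\hs \bar 6\hs \bar 4\hs 2\hs \bar 3 \vartriangleleft_2 1\hs \bar 4\hs 5\hs \bar 6\hs 2\hs \bar 3$). Once $j = 2$, the formula becomes $u_2 \cdots u_n = (-v_1, v_3, \ldots, v_n)$ and an argument parallel to Case 1 (with $-v_1$ now playing the role of the stuck first letter of the tail) identifies this word with $\alpha_{\max}(M')$.

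The main obstacle will be the reduction $j = 2$ in Case 2. The interval constraints of Lemma~\ref{interval-lem} admit several configurations for $v_2, \ldots, v_{j-1}$ (small or large absolute value, each of either sign), and constructing an explicit $\vartriangleleft_i$-ascent in each subcase requires a careful analysis using Corollary~\ref{descent-cor} to control the descent structure of the prefix. I expect this to be the longest step; the remaining noncrossing checks and tail-flattening identifications should follow from relatively mechanical applications of Lemma~\ref{order-lem} and Theorem~\ref{ndes-thm}.
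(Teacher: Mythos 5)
Your overall architecture matches the paper's: establish $M' \in \ONCSM(n)$ via Lemma~\ref{order-lem}, then observe that uniqueness of the edge is automatic from the definition of $\QQQ$, so the content is the identity $u = \alphaq_{\max}(M')$ for $u = vt_0$ or $u = vs_{1j}$. The noncrossing verification is fine and is essentially the paper's (the key point, that $M$ has no block $\{p,q\}$ with $p < v_1 < -v_j < q$, is exactly the forbidden-subword argument you give).

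The genuine gap is the step you yourself flag as "the main obstacle": showing that $<_\cQ$-maximality forces $j=2$ in Case 2 (and, in Case 1, forces the tail to equal $\alpha_{\max}(M')$ --- note that this is the \emph{same} structural fact, which your Case 1 also invokes without proof when you write "maximality of $v$ under $<_\cQ$ yields $v_2\cdots v_n = \alpha_{\max}(M')$"). Your proposed route --- exhibit a $cab$-pattern at some position $i \geq 2$ whenever $j \geq 3$ --- is left entirely as a sketch, and it carries an unaddressed subtlety: a relation $v \vartriangleleft_i v'$ only contradicts maximality of $v$ in $\cA_M$ if $v'$ again lies in $\cA_M$, i.e.\ if $v' \leq_\cA \alpha_{\max}(M)$, which is not automatic for an interval defined by inequalities and would itself need an argument. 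The paper avoids all of this by first deriving, from Theorem~\ref{ndes-thm}, an explicit normal form for any $<_\cQ$-maximal $v \in \cA_M$, namely $v = v_1 c_1 c_2\cdots c_k a_1 \bar{b_1}\cdots a_l\bar{b_l}$ with $c_1 < \cdots < c_k < a_1 < \cdots < a_l$ and $a_i < -b_i$ (the symmetric-block letters sorted first, then the complementary pairs sorted). From this form, Theorem~\ref{ndes-thm} immediately gives $\{v_1,-c_1\} \in M$ when $v_1 > 0$, i.e.\ $j=2$, and the comparison with $\alphaq_{\max}(M')$ reduces to the single noncrossing observation $c_1 < -v_1 < c_i$ for $i \geq 2$ (which you gesture at but do not carry out). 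Without the normal form, or a completed version of your case analysis together with the closure argument above, the proof is not done.
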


\begin{proof}
If $v_1<0$ then clearly $M' \in \ONCSM(n, v_1)$.
If $v_1>0$ and $j \in [2,n]$ is such that $0<v_1<-v_j$ and $\{v_1,-v_j\} \in M$,
then
Lemma~\ref{order-lem} implies that $M$ has no blocks $\{a,b\}$ with $a<v_1<-v_j<b$,
from which it follows that $M' \in \ONCSM(n, v_j)$.

Assume $v$ is maximal with respect to $<_\cQ$.
Using Theorem~\ref{ndes-thm}, 
it is not hard to
show that
\[
v = v_1 c_1c_2\cdots c_k a_1 \bar{b_1} a_2 \bar{b_2} \cdots a_l \bar{b_l}
\]
for some numbers 
where $k,l \in \NN$ and
$c_1<c_2<\dots<c_k < a_1<a_2<\dots < a_l$ and $a_i < -b_i$ for $i \in [l]$.
If $v_1<0$
then 
$vt_0 \to v$ is the unique edge incident to $v$ in $\QQQ$,
and it follows from Theorem~\ref{ndes-thm}
that either $k = 0$ or $v_1<c_1$,
and in turn that $v=\alpha_{\max}(M)$ and $vt_0 = \alphaq_{\max}(M')$.
Instead suppose $v_1 > 0$.
Theorem~\ref{ndes-thm} then
implies that $k>0$, $v_1<-c_1$, $\{v_1,-c_1\} \in M$,
and $\{ \pm c_i\} \in M$ for $i \in [2,k]$.
In this case $vs_{12} \to v$ is the unique edge incident to $v$ in $\QQQ$.
Since $M$ is noncrossing, we must have $c_1 < -v_1 < c_i$ for all $i \in [2,k]$,
so $vs_{12}=\alphaq_{\max}(M')$.
\end{proof}

The following theorem confirms that $\QQQ$
is indeed a bipartite graph on the vertex set $\cA_n\sqcup \cQ_n$.

\begin{theorem}\label{qqq-thm}
The edges in 
$\QQQ$ have the following properties:
\ben
\item[(a)] If $v \in \cA_n\sqcup \cQ_n^+$ and $u\to v$ is an edge in $\QQQ$, then $u \in \cQ_n^-$ and $u\lessdot v$.
\item[(b)] If $v \in \cQ_n^+$ and $v\to w$ is an edge in $\QQQ$, then $w \in \cQ_n^-$
and $w\lessdot v$.
\een
\end{theorem}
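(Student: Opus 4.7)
The plan is to handle each of the four edge types defined by equations (\ref{u-eq}), (\ref{w-eq}), (\ref{u1-eq}), and (\ref{u2-eq}) in turn, which together cover parts (a) and (b) of the theorem. In each case there are two independent claims: a Bruhat cover relation, and the classification of the endpoint as an odd quasi-atom.

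The Bruhat covers are all essentially immediate. When $v \in \cA_n$ has $v_1 < 0$ and $u = vt_0$, the element $u$ has $u_1 = -v_1 > 0$, so Lemma~\ref{gendes-lem}(a) applied to $u$ with $i = 1$ yields $u \lessdot us_{11} = v$. When $v \in \cA_n$ has $v_1 > 0$ and $u = vs_{1j}$ arises from the unique $j$ with $\{v_1, -v_j\} \in \sh(v)$, the indices $1$ and $j$ are complementary for $v$, and Lemma~\ref{des1j-lem} gives $u \lessdot v$. When $v \in \cQ_n^+$ has distinguished indices $i < j$, both $vs_{1j} \lessdot v$ and $vt_{1i} \lessdot v$ follow directly from Corollary~\ref{dist-lem}.

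For the endpoint's membership in $\cQ_n^-$, I would first propose, in each of the four cases, an explicit candidate matching $M^\ast$ obtained from $\sh(v)$ or $\shq(v)$ by a small modification: when $v \in \cA_n$ has $v_1 < 0$, let $M^\ast = \sh(v) \setminus \{\{\pm v_1\}\}$; when $v \in \cA_n$ has $v_1 > 0$ with block $\{b,c\} = \{v_1, -v_j\} \in \sh(v)$, let $M^\ast = \bigl(\sh(v) \setminus \{\{b,c\}, \{-b,-c\}\}\bigr) \cup \{\{\pm b\}\}$; for the incoming and outgoing edges at $v \in \cQ_n^+$ with $v_1 = b$ and distinguished block $\{a,c\} \in \shq(v)$, remove $\{a,c\}$ and $\{-a,-c\}$ from $\shq(v)$ and add respectively $\{a,b\}, \{-a,-b\}$ or $\{b,c\}, \{-b,-c\}$. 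Checking that $M^\ast$ is symmetric and noncrossing, and that it contains no symmetric block $\{\pm c'\}$ with $c' > u_1$ and no non-symmetric block straddling $u_1$, is then a routine noncrossing-matching argument: any putative bad block would either cross the just-removed block, or by Lemma~\ref{order-lem} (respectively Lemma~\ref{qinterval-lem}(c)) would force a letter to appear before position~$1$ of the one-line word of $v$, which is impossible. Since the absence of a straddling non-symmetric block is exactly the oddness condition, $u$ (or $w$) will lie in $\cQ_n^-$.

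The main obstacle is to prove that $M^\ast$ actually equals $\shq(u)$ or $\shq(w)$, i.e., that $\fl_\pm(u_2 u_3 \cdots u_n)$ (or the corresponding flattening for $w$) lies in $\cA_{n-1}$ with shape $\fl_\pm(M^\ast)$. The direct approach is via Theorem~\ref{ndes-thm}: the modified subword differs from $v_2 v_3 \cdots v_n$ in at most one entry, and Lemma~\ref{interval-lem} (or Lemma~\ref{qinterval-lem}(d) in the quasi-atom cases) forbids precisely the boundary values that could shift the descent pattern near that position, so a descent-removal sequence for $v_2 v_3 \cdots v_n$ lifts to a parallel one for $u_2 u_3 \cdots u_n$ yielding shape $\fl_\pm(M^\ast)$. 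A cleaner alternative is to verify the edge behavior only at the extremal element $\alpha_{\max}(M)$ of each $\cA_M$ using Lemma~\ref{chop-lem}, and at the analogous maxima of the $\cQ_M$ by a direct computation, then propagate throughout each interval via the $\vartriangleleft_i$-compatibility already supplied by Lemma~\ref{prec-lem}.
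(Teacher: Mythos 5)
Your proposal is correct and, in its ``cleaner alternative'' form, is essentially the paper's own proof: identical cover arguments via Lemma~\ref{des-lem}(a), Lemma~\ref{des1j-lem}, and Corollary~\ref{dist-lem}; the same candidate matchings $M'$, $P$, $Q$; and the same strategy of verifying the edge at an extremal element of each interval (Lemma~\ref{chop-lem} for atoms, direct computation for quasi-atoms) and propagating with Lemma~\ref{prec-lem}. One small correction: for the outgoing edge $v \to w$ from an even quasi-atom you must anchor at the \emph{minimum} $\alphaq_{\min}(M)$ rather than the maximum, since Lemma~\ref{prec-lem}(b) propagates downward along $\vartriangleleft_i$.
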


\begin{proof}
Let $v \in \cA_n$ and suppose $u \to v$ is an edge in $\QQQ$.
We deduce that $u\lessdot v$ either
by Lemma~\ref{des-lem}(a) when $v_1<0$ or by Lemma~\ref{des1j-lem} when $v_1>0$.
Let $M = \sh(v)$ and define $M'$ from $M$ as in Lemma~\ref{chop-lem}.
It follows from Lemmas~\ref{prec-lem} and \ref{chop-lem}
that $u \leq_\cQ \alphaq_{\max}(M') \in \cQ_n^-$ so $u \in \cQ_n^-$.

Now let $v \in \cQ_n^+$ and suppose
 $u\to v$ and $v\to w$ are the edges incident to $v$ in $\QQQ$.
Then $u=vs_{1j}$ and $w = vt_{1i}$ where $i<j$ are the distinguished indices in $v$,
so  Corollary~\ref{dist-lem} implies that $u\lessdot v$ and $w\lessdot v$. 
Let 
$M = \shq(v)$.
Define $a = v_i$, $b = v_1$, and $c=-v_j$ so that $0<a<b<c$ and $\{a,c\} \in M$.
Construct $P$ (respectively, $Q$) from $M$ by replacing the blocks $\{a,c\}$ and $\{-a,-c\}$
by $\{a,b\}$ and $\{-a,-b\}$
(respectively, $\{b,c\}$ and $\{-b,-c\}$).
Since $M \in \ENCSM(n,b)$ it follows that $P \in \ONCSM(n,c)$ and $Q \in \ONCSM(n,a)$.
If $v$ is maximal with respect to $<_\cQ$ then
$v = \alphaq_{\max}(M)$
and evidently
$u = \alphaq_{\max}(P)$.
If $v$ is minimal with respect to $<_\cQ$ then 
$v = \alphaq_{\min}(M)$
and
$w = \alphaq_{\min}(Q)$.
Lemma~\ref{prec-lem} therefore
implies that $u \leq_\cQ \alphaq_{\max}(P) \in \cQ_n^-$
and $w \geq_\cQ \alphaq_{\min}(Q) \in \cQ_n^-$
so $u,w \in \cQ_n^-$.
\end{proof}

Let
$
\cS(w) 
= \cS_1(w)
$
and
$
\cT(w) 
= \cT^+_1(w)
$
for $w \in C_n$, where $\cS_j(w)$ and $\cT^\pm_j(w)$ are as in \eqref{stt-eq}.
If $b=w_1>0$
then we can also write
$
\cS(w) = \{ t_{ab} w:  a\in [b-1] \text{ and }w \lessdot t_{ab} w\} \sqcup \{w t_0\}
$
and
$
\cT(w) = \{ t_{bc} w:  c \in [b+1,n+1] \text{ and }w \lessdot t_{bc} w\}.
$

\begin{lemma}\label{st-lem}
Let $w,w' \in \cQ_n^-$ and $i \in [2,n-2]$. Suppose $b=w_1$ and $1 \leq a<b<c \leq n$.
\ben
\item[(a)] If $w \vartriangleleft_i w'$ and $w \lessdot t_{ab}w \in \cS(w)$, then $t_{ab} w \vartriangleleft_i t_{ab} w'$ and $w'\lessdot t_{ab}w' \in \cS(w')$.
\item[(b)] If $w'\vartriangleleft_i w$ and $w \lessdot t_{bc}w \in \cT(w)$ then $t_{bc}w' \vartriangleleft_i t_{bc} w$ and $w' \lessdot t_{bc} w' \in \cT(w')$. 
\een
\end{lemma}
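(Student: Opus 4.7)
The plan is to handle both parts via parallel case analyses on where the critical value---$-a$ in (a), $c$ in (b)---sits in the one-line notation of $w$ relative to the window $\{i,i+1,i+2\}$ on which $w$ and $w'$ disagree. Writing the triples at these positions as $\gamma\alpha\beta$ and $\beta\gamma\alpha$ with $\alpha<\beta<\gamma$, the key preparatory observation is that in both parts the triple $\gamma\alpha\beta$ contains a descent between positions $i$ and $i+1$, so Lemma~\ref{qinterval-lem}(b) (applied to whichever of $w, w'$ contains this triple) gives $\gamma>0$ and $\gamma<-\alpha$, whence $\alpha<0$.

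For part (a), I would first rewrite $t_{ab}w$ as the right multiplication $ws_{1j}$ where $j$ is the unique position of $-a$ in $w$, after which the cover condition $w\lessdot t_{ab}w$ becomes, via Lemma~\ref{des-lem}(b), the avoidance condition $w_k\notin(-b,-a)$ for all $2\leq k\leq j-1$. Since $b=w_1$ is pinned at position $1$, and since neither $a$ nor $-b$ can appear in the one-line notation of $w$, the only element of $\{a,b,-a,-b\}$ that can occur in the window $\{i,i+1,i+2\}$ is $-a$; and since $\gamma>0$, the case $\gamma=-a$ is excluded. This leaves three subcases, based on whether $-a$ equals $\alpha$, equals $\beta$, or lies outside the window. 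In the outside case $t_{ab}$ fixes the triple entirely, so $t_{ab}w\vartriangleleft_i t_{ab}w'$ is immediate and the avoidance condition passes to $w'$ verbatim. In the subcases $-a=\alpha$ (so $j=i+1$) and $-a=\beta$ (so $j=i+2$), I would replace the unique occurrence of $-a$ in the triple by $-b$ and verify that the resulting pair of triples still has the form of a $\vartriangleleft_i$ relation; the needed strict inequalities follow from the sign constraints ($\gamma>0$, $\alpha<-a$) together with the avoidance inherited from $w$. The cover condition for $w'\lessdot t_{ab}w' = w's_{1j'}$ with shifted index $j'\in\{i+2,i\}$ is then checked by noting that the positions $\{2,\ldots,j'-1\}$ in $w'$ carry values either unchanged from $w$ or confined to $\{\alpha,\beta,\gamma\}$, which avoid $(-b,-a)$ by the previous step.

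Part (b) follows the same template with $(a,b)$ replaced by $(b,c)$: here $t_{bc}w = wt_{1j}$ where $j$ is the position of $c$ in $w$, and Lemma~\ref{des-lem}(c) provides the avoidance $w_k\notin(b,c)$ for $2\leq k\leq j-1$. Now $c>0$ and $\alpha<0$ together rule out $\alpha=c$, leaving only the subcases $\gamma=c$ (so $j=i+1$, shifting to $j'=i$), $\beta=c$ (so $j=i$, shifting to $j'=i+2$), or $c$ outside the window, each handled exactly as in part (a).

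The main obstacle is the in-window bookkeeping: in each of the two nontrivial subcases one must correctly identify the shifted index $j'$ where the critical value now sits in $w'$, verify the transformed triple realizes $\vartriangleleft_i$ with the correct strict inequalities, and confirm that the new avoidance condition holds. The sign restrictions $\gamma>0$ and $\alpha<0$ supplied by Lemma~\ref{qinterval-lem}(b) are precisely what reduce the case analysis to this manageable list.
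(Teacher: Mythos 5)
Your proposal is correct and follows essentially the same route as the paper's proof: both locate the critical letter ($-a$ in (a), $c$ in (b)) relative to the window $\{i,i+1,i+2\}$, use Lemma~\ref{qinterval-lem}(b) for the sign constraints on the descent triple, and invoke the avoidance condition coming from $w \lessdot t_{ab}w$ (resp.\ $w \lessdot t_{bc}w$) to rule out the single problematic configuration. The only cosmetic difference is at the end, where the paper deduces $w'\lessdot t_{ab}w'$ and $w'\lessdot t_{bc}w'$ from the fact that $\vartriangleleft_i$ preserves length, whereas you re-verify the avoidance condition for $w'$ directly; both work.
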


\begin{proof}
Suppose $w \vartriangleleft_i w'$ and $w \lessdot t_{ab}w$.
Then for some $j \in [2,n]$ we have $w_j = -a$
and no $e \in \{w_2,w_3,\dots,w_{j-1}\}$ has $a<-e<b$.
We can write $w_iw_{i+1}w_{i+2} = zxy$ where $x<y<z$ and $0<z<-x$.
The only way we can fail to have 
$t_{ab} w \vartriangleleft_i t_{ab} w'$
is if $-b < w_{i+1} < -a = w_{i+2}$, but this would contradict $w \lessdot t_{ab}w$.
Since the relation $\vartriangleleft_i$ is length-preserving, we also have $w'\lessdot t_{ab}w'$.

Suppose $w' \vartriangleleft_i w$ and $w \lessdot t_{bc}w$.
Then for some $j \in [2,n]$ we have $w_j = c$
and no $e \in \{w_2,w_3,\dots,w_{j-1}\}$ has $b<e<c$.
We can write $w_iw_{i+1}w_{i+2} = yzx$ where $x<y<z$ and $0<z<-x$.
The only way we can fail to have 
$t_{bc} w' \vartriangleleft_i t_{bc} w$
is if $w_{i+1} =c > w_i > b$, but this would contradict $w \lessdot t_{bc}w$.
Since the relation $\vartriangleleft_i$ is length-preserving, we have $w'\lessdot t_{bc}w'$.
\end{proof}

\begin{lemma}\label{pre-st-lem1}
Let $u \in \cQ_n^-$ and $P = \shq(u)$.
Define $M$ by adding the block $\{\pm u_1\}$ to $P$.
Then $M \in \NCSM(n)$ and $ ut_0 \in \cA_M$, and $u \to ut_0$ is an edge in $\QQQ$.
\end{lemma}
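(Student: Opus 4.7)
The plan is to verify the three assertions in order: $M \in \NCSM(n)$, $ut_0 \in \cA_M$, and that $u \to ut_0$ is an edge in $\ora{\cL_n}$. I start with the matching assertion. Since $P$ is a symmetric noncrossing matching on $[\pm n] \setminus \{\pm u_1\}$ and we adjoin the symmetric block $\{\pm u_1\}$, $M$ is automatically a symmetric perfect matching on $[\pm n]$, and only the noncrossing property requires checking. Any crossing in $M$ must involve $\{\pm u_1\}$ and some $\{a,b\} \in P$. Using the basic fact that blocks of a symmetric noncrossing matching consist of same-sign elements or are themselves symmetric $\{\pm c\}$, together with the symmetry of $P$, one reduces the four sign cases to ruling out: a positive block $\{a,b\} \in P$ straddling $u_1$ (impossible by condition (b) in the definition of $\cQ_n^-$), and a symmetric block $\{\pm c\} \in P$ with $c > u_1$ (impossible by condition (c)).

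Next, to show $ut_0 \in \cA_M$, the strategy is to apply the descent-removal characterization of shape (Theorem~\ref{ndes-thm}) to understand $ut_0$. Since $\fl_\pm(u_2 u_3 \cdots u_n) \in \cA_{n-1}$ has shape $\fl_\pm(P)$, running descent removal on $u_2 u_3 \cdots u_n$ terminates at a strictly increasing word whose letters are exactly $\{-c : \{\pm c\} \in P,\ 0 < c\}$. By condition (c), every such $c$ is less than $u_1$, hence every such $-c$ exceeds $-u_1$. Consequently, applying the same descent-removal steps to $ut_0 = \bar{u_1}\, u_2\, u_3 \cdots u_n$ (which only alters positions $\geq 2$) produces a strictly increasing terminal word with letter set $\{-u_1\} \cup \{-c : \{\pm c\} \in P\}$, and an unchanged set of descents. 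Granted that $ut_0 \in \cA_n$, Theorem~\ref{ndes-thm} then identifies the shape of $ut_0$ as exactly $M$, so $ut_0 \in \cA_M$.

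To verify $ut_0 \in \cA_n$ in general, I would first handle the maximum case $u = \alphaq_{\max}(P)$: the explicit form $v = v_1 c_1 c_2 \cdots c_k a_1 \bar{b_1} \cdots a_l \bar{b_l}$ derived in the proof of Lemma~\ref{chop-lem} shows directly that $\alpha_{\max}(M) = \alphaq_{\max}(P) \cdot t_0 = ut_0$, placing $ut_0 \in \cA_M$ in this case. For an arbitrary $u \in \cQ_P$, any $<_\cQ$-chain from $u$ to $\alphaq_{\max}(P)$ consists of $\vartriangleleft_i$ moves with $i \geq 2$; since such moves do not affect position 1, applying $t_0$ elementwise lifts the chain to a $\leq_\cA$-chain $ut_0 \leq_\cA \alpha_{\max}(M)$. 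A symmetric argument starting from $\alphaq_{\min}(P)$, combined with a short auxiliary chain of $\vartriangleleft$-moves (possibly including $\vartriangleleft_1$-moves) between $\alphaq_{\min}(P) \cdot t_0$ and $\alpha_{\min}(M)$, yields $\alpha_{\min}(M) \leq_\cA ut_0$. Together, these sandwich $ut_0$ between the extremes of $\cA_M$.

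Finally, the edge assertion is immediate: $ut_0 \in \cA_n$ has first entry $-u_1 < 0$, so equation \eqref{u1-eq} fixes the unique incoming edge in $\ora{\cL_n}$ to $ut_0$ as $(ut_0) t_0 \to ut_0$, and $(ut_0) t_0 = u$. The main obstacle in this plan is the verification that $ut_0 \in \cA_n$ for every $u \in \cQ_n^-$; the comparison with $\alpha_{\max}(M)$ via $\alphaq_{\max}(P)$ is clean, but the comparison with $\alpha_{\min}(M)$ is less so, since $\alpha_{\min}(M)$ need not begin with $-u_1$, so the $t_0$-lift from $\alphaq_{\min}(P)$ does not directly land on $\alpha_{\min}(M)$ and some additional moves are needed.
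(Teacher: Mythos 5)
Your plan is structurally sound, and two of its three parts match the paper: the verification that $M \in \NCSM(n)$ (reducing crossings with $\{\pm u_1\}$ to a forbidden straddling block or a forbidden high symmetric block) and the observation that the edge $u \to ut_0$ is then automatic from \eqref{u1-eq}. The one genuine gap is the step you yourself flag: you never establish the lower bound $\alpha_{\min}(M) \leq_\cA ut_0$, and without it your argument does not place $ut_0$ in $\cA_M$ (nor even in $\cA_n$, which your Theorem~\ref{ndes-thm} computation presupposes). The paper sidesteps this entirely: it proves only the upper bound, by lifting the chain $u \leq_\cQ \alphaq_{\max}(P)$ through $t_0$ and noting $\alphaq_{\max}(P)t_0 = \alpha_{\max}(M)$ (your identification of this via Lemma~\ref{chop-lem} is right in substance), and then concludes $ut_0 \in \cA_M$ using the fact recorded in Section 4 that each $\cA_M$ is a weakly connected component of the Hasse diagram of $<_\cA$ --- so any word joined to $\alpha_{\max}(M)$ by a chain of $\vartriangleleft$-moves already lies in $\cA_M$. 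With that fact, both your lower-bound computation and your descent-removal detour become unnecessary.

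If you prefer to finish your two-sided version, the missing step is closable. Write $e = u_1$ and let $\{a_1,b_1\},\dots,\{a_k,b_k\}$ be the blocks of $P$ with $e < a_i < b_i$, ordered so that $b_1 > \dots > b_k$; since no block of $P$ straddles $e$ and no symmetric block exceeds $e$, sorting the pairs of $\Pair(M)$ by second coordinate inserts $(-e,-e)$ immediately after these, giving $\alpha_{\min}(M) = a_1\bar{b_1}\cdots a_k\bar{b_k}\hs\bar{e}\hs R$ while $\alphaq_{\min}(P)t_0 = \bar{e}\hs a_1\bar{b_1}\cdots a_k\bar{b_k}\hs R$ for the same suffix $R$. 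Each triple $a_i\bar{b_i}\bar{e}$ is a $cab$-pattern (as $\bar{b_i} < \bar{e} < a_i$), so $k$ moves of the form $cab \to bca$ carry $\alpha_{\min}(M)$ up to $\alphaq_{\min}(P)t_0$, and lifting $\alphaq_{\min}(P) \leq_\cQ u$ through $t_0$ then gives $\alpha_{\min}(M) \leq_\cA ut_0$. Once both bounds are in hand, $ut_0 \in \cA_M$ follows directly from the definition of $\cA_M$ as an interval in $C_n$ together with Theorem~\ref{interval-thm}, so the computation of $\sh(ut_0)$ via Theorem~\ref{ndes-thm} in your second paragraph is redundant. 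As submitted, though, the proof is incomplete at exactly the point you identify.
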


\begin{proof}
Since $P$ has no blocks $\{a,b\}$ with $a<u_1<b$,
the matching $M $ belongs to $ \NCSM(n)$
and $c=u_1$ is the largest value such that $\{\pm c\} \in M$.
If $u  =\alphaq_{\max}(P)$ then evidently $ut_0 = \alpha_{\max}(M)$.
In general 
we have $u \leq_\cQ \alphaq_{\max}(P)$, and this implies that
$ut_0 \leq_\cQ \alpha_{\max}(M)$, so $ut_0 \in \cA_M$.
\end{proof}

\begin{lemma}\label{pre-st-lem2}
Let $u \in \cQ_n^-$, $P = \shq(u)$,
and $b=u_1$.
Suppose $a \in [b-1]$ is such that $\{\pm a\} \in P$ and $u \lessdot t_{ab}u$.
Define $M$ by removing the block $\{\pm a\}$ from $P$ and then adding $\{a,b\}$ and $\{-a,-b\}$.
Then $M \in \NCSM(n)$ and $t_{ab}u \in \cA_M$, and $u \to t_{ab}u$ is an edge in $\QQQ$.
\end{lemma}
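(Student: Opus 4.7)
The plan is to verify the three assertions of the lemma in turn: that $M \in \NCSM(n)$, that $v := t_{ab}u \in \cA_M$, and that $u \to v$ is an edge in $\QQQ$. Most of the work lies in the second assertion, which I handle via a direct computation of $\sh(v)$ through Theorem~\ref{ndes-thm}.

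For $M \in \NCSM(n)$: symmetry is built in, and for noncrossing I only need to rule out a block $\{c,d\} \in P$ with $c<d$ crossing the new block $\{a,b\}$. The configuration $a<c<b<d$ is excluded by $P \in \ONCSM(n,b)$. The configuration $c<a<d<b$ splits by sign: $c>0$ produces a crossing with $\{\pm a\}$ already inside $P$; $c<0$ with $c \neq -d$ is forbidden by the Fact that blocks in $\NCSM(n)$ are symmetric or single-signed; and $c=-d$ makes $\{\pm d\} \in P$ symmetric with $a<d<b$, so Lemma~\ref{qinterval-lem}(c)iii places $-d$ into $\{u_2,\ldots,u_{j-1}\}$ (with $j$ the position of $-a$ in $u$), contradicting the hypothesis $u \lessdot t_{ab}u = us_{1j}$ which, via Lemma~\ref{des-lem}(b), asserts no element of $\{u_2,\ldots,u_{j-1}\}$ lies in $(-b,-a) \ni -d$. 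The same argument forces $a$ to be the largest integer with $\{\pm a\} \in P$, a fact I use below.

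To show $v \in \cA_M$ via Theorem~\ref{ndes-thm}, I execute the following progressive descent removal on $v_1 v_2 \cdots v_n$: apply to positions $\geq 2$ the same sequence of descent removals used by the quasi-atom procedure that computes $\shq(u) = P$ from $u_2 \cdots u_n$. This transfer is legitimate because the quasi-atom procedure preserves the letter $-a$ (which lies in its final increasing word), so its removal steps never involve position $j$; moreover, the values of $v$ at all positions other than $j$ agree with those of $u$, so each intended step remains an actual descent in $v$'s evolving word. Any additional descents introduced by the substitution $-a \mapsto -b$ need not be removed during this stage. After the stage completes, $v$'s word reads $a,-b,-c_1,-c_2,\ldots$, where $-a<-c_1<-c_2<\cdots$ was $u$'s final increasing word (with $-a$ first since $a$ is maximal) and substituting $-b$ for $-a$ preserves increasingness since $-b<-c_i$ for all $i$. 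Removing the remaining descent $(a,-b)$ at positions $1,2$ yields the increasing word $-c_1,-c_2,\ldots$. Theorem~\ref{ndes-thm} then identifies $\sh(v)$ as having the non-symmetric blocks of $P$ plus $\{a,b\}$ and the symmetric blocks of $P$ minus $\{\pm a\}$---exactly $M$. Hence $v \in \cA_M$ by Theorem~\ref{interval-thm}.

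Finally, $u \to v$ is an edge in $\QQQ$ by the formula \eqref{u2-eq}: $v_1 = a > 0$, the unique $j$ with $\{v_1, -v_j\} = \{a,b\} \in M = \sh(v)$ is the position of $-a$ in $u$ (equivalently, of $-b$ in $v$), and $vs_{1j} = u$ since $(vs_{1j})_1 = -v_j = b = u_1$, $(vs_{1j})_j = -v_1 = -a = u_j$, and the other positions agree. The main obstacle is justifying the ``ignore extraneous $-b$-descents'' strategy in the middle step as a valid progressive descent removal; this reduces to the observation that the quasi-atom procedure's planned removals act at positions other than $j$, where $u$ and $v$ have identical values, so each intended descent persists in $v$'s word throughout the stage.
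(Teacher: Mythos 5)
Your verification that $M \in \NCSM(n)$ is sound and essentially matches the paper's argument (the key point in both is that a symmetric block $\{\pm d\} \in P$ with $a<d<b$ would, by Lemma~\ref{qinterval-lem}(c)iii, force $-d$ to appear before $-a$ in $u_2\cdots u_n$, contradicting $u \lessdot t_{ab}u$), and your final step deducing the edge from \eqref{u2-eq} is fine once $t_{ab}u \in \cA_M$ is known. The problem is the middle step. You establish $t_{ab}u \in \cA_M$ by running the descent-removal procedure on $v = t_{ab}u$ and invoking Theorem~\ref{ndes-thm} to conclude $\sh(v) = M$. But Theorem~\ref{ndes-thm} is a statement \emph{about} elements of $\cA_n$: it computes $\NNeg(w)$ and $\NDes(w)$ in terms of $\sh(w)$, and $\sh(w)$ is only defined for $w \in \cA_n$. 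The descent-removal procedure runs to completion on any word (for instance on $\bar 1\hs\bar 2 \notin \cA_2$), so the fact that it terminates on $v$ with output matching $M$ does not certify that $v \in \cA_n$; no converse of Theorem~\ref{ndes-thm} is stated in the paper. Since $v \in \cA_n$ is precisely the content of the assertion $t_{ab}u \in \cA_M$, your argument is circular at its central point. (The bookkeeping in your transferred descent removals is otherwise correct, and would correctly compute $\sh(v)$ \emph{if} one already knew $v$ were an atom.)

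The paper closes this step by a different route: $\cA_M$ is by definition the interval $\{w \in C_n : \alpha_{\min}(M) \leq_\cA w \leq_\cA \alpha_{\max}(M)\}$ inside all of $C_n$, so membership can be tested without first knowing that $v$ is an atom. One reduces to the case where $u = \alphaq_{\max}(P)$ is maximal under $<_\cQ$, where $u$ and $t_{ab}u$ can be written out explicitly and compared directly with $\alpha_{\max}(M)$, and then transfers to general $u$ using Lemma~\ref{st-lem}(a). To repair your proof you would need to supply such an interval bound (or some other independent certificate that $t_{ab}u \in \cA_n$) --- at which point the computation of $\sh(v)$ via descent removals becomes superfluous.
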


\begin{proof}
Since $u\lessdot t_{ab}u$,
we must have $u_j=-a$ for some $j \in [2,n]$, and no numbers between
$- b$ and $- a$ can appear in $u_2u_3\cdots u_{j-1}$.
 There can be no blocks $\{x,y\} \in P$ with $x < a < y$: since $P \in \ONCSM(n,b)$, such a block necessarily satisfies $x = -y < a < y < b$, and then Lemma~\ref{qinterval-lem}(c)(iii) contradicts the previous sentence.
This is enough to conclude that $M \in\NCSM(n)$,
and that $e=a$ is the largest number with $\{ \pm e\} \in P$.
Assume $u=\alphaq_{\max}(P)$ is maximal under $<_\cQ$.
We must then have $j=2$ 
and we can write
\[u = b \bar a c_1c_2\cdots c_k a_1\bar {b_1} a_2\bar{b_2}\cdots a_l \bar{b_l}\]
where $\{ \pm c_i\}$ for $i \in [k]$ together with $\{\pm a\}$ are the symmetric blocks in $P$,
where $\{a_i,b_i\}$ for $i \in [l]$ are the blocks in $P$ with $0<a_i<b_i$,
and where $ -a < c_1  < \dots < c_k < 0 < a_1  < \dots <a_l$.
Hence 
\[
t_{ab}u = a \bar b c_1c_2\cdots c_k a_1\bar {b_1} a_2\bar{b_2}\cdots a_l \bar{b_l}
\]
and it is easy to see that
\begin{equation*}
    t_{ab}u \leq_\cA \alpha_{\max}(M) = c_1 c_2 \cdots c_k a_1 \bar{b_1} \cdots a \bar{b} \cdots a_l \bar{b_l}
\end{equation*}
 since
$ -b < c_i < 0 $ for each $i \in [k]$
and  $ -b < -b_i < 0 $ for each $i \in [l]$ with $a_i < a$ as $P$ is noncrossing.
Therefore $t_{ab}u \in \cA_M$.
If $u$ is not maximal under $<_\cQ$,
then it follows from Lemma~\ref{st-lem}(a)
that we still have $t_{ab}u \leq_\cA \alpha_{\max}(M)$ so again $t_{ab} u \in \cA_M$.
Once we know that $t_{ab}u \in \cA_M$, the claim that
$u \to t_{ab}u$ is an edge in $\QQQ$ holds by definition.
\end{proof}

\begin{theorem}\label{st-thm}
Let $u,w \in \cQ_n^-$ and $v \in C_n$ with $w_1 < n$.
\ben
\item[(a)]  It holds that $v \in \cS(u)$ if and only if $u\to v$ is an edge in $\QQQ$.

\item[(b)] It holds that $v \in \cT(w)$ if and only if $v \to w$ is an edge in $\QQQ$.
\een
\end{theorem}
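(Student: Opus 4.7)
The reverse implications ($\Leftarrow$) in both parts follow directly from the edge definitions together with the covering statements already established. For (a), if $u \to v$ is an edge with $u \in \cQ_n^-$, inspection of the outgoing edges from $u$ in $\QQQ$ (namely $u \to ut_0$, $u \to us_{1j}$ when $\{\pm a\} \in \shq(u)$ for $a = -u_j$, or $u \to v$ for $v$ an even quasi-atom with $u = vs_{1j}$) shows $v = us_{1j}$ for some $j \in [n]$, while Theorem~\ref{qqq-thm}(a) supplies $u \lessdot v$, so $v \in \cS(u)$. For (b), any edge $v \to w$ into $w \in \cQ_n^-$ has $v \in \cQ_n^+$ and $w = vt_{1i}$ for the smaller distinguished index $i$ of $v$; then $v = wt_{1i}$, and Corollary~\ref{dist-lem} gives $w \lessdot v$, so $v \in \cT(w)$.

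For the forward direction of (a), fix $u \in \cQ_n^-$ with $b = u_1$ and $v = us_{1j} \in \cS(u)$. If $j = 1$, then $v = ut_0$ and Lemma~\ref{pre-st-lem1} directly shows $v \in \cA_n$ with the edge $u \to v$. If $j \geq 2$, Lemma~\ref{gendes-lem}(b) forces $0 < a < b$ where $a = -u_j$, and by Lemma~\ref{qinterval-lem}(a), index $j$ is either symmetric for $u$ (giving $\{\pm a\} \in \shq(u)$, whence Lemma~\ref{pre-st-lem2} concludes $v \in \cA_n$ with the edge) or part of a complementary pair $(i, j)$ for some $d = u_i$ with $0 < d < a$ and $\{d, a\} \in \shq(u)$. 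In the complementary case, I claim $v \in \cQ_n^+$ with $\shq(v) = (\shq(u) \setminus \{\{d,a\}, \{-d,-a\}\}) \cup \{\{d,b\}, \{-d,-b\}\}$: then $\{d, b\}$ is the unique distinguished block (since $d < v_1 = a < b$), the distinguished indices are $(i, j)$, and the incoming edge prescribed for $v$ is $vs_{1j} = u$, so $u \to v$ is an edge.

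The forward direction of (b) is similar. Fix $w \in \cQ_n^-$ with $b = w_1 < n$ and $v = wt_{1k} \in \cT(w) \cap C_n$. Lemma~\ref{gendes-lem}(c) gives $b < w_k$ with no intermediate value, so $w_k > 0$; by Lemma~\ref{qinterval-lem}(a) and oddness of $w$, index $k$ is the smaller of a complementary pair $(k, m)$, with $\{w_k, -w_m\} \in \shq(w)$ and $b < w_k < -w_m$. I claim $v \in \cQ_n^+$ with $\shq(v) = (\shq(w) \setminus \{\{w_k, -w_m\}, \{-w_k, w_m\}\}) \cup \{\{b, -w_m\}, \{-b, w_m\}\}$: then $\{b, -w_m\}$ is the unique distinguished block (with $b < v_1 = w_k < -w_m$), the distinguished indices are $(k, m)$, and the prescribed outgoing edge of $v$ is $vt_{1k} = w$. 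Uniqueness of the distinguished block uses Lemma~\ref{qinterval-lem}(c)(i): a competitor $\{A, C\}$ with $A < w_k < -w_m < C$ must have $A > b$ by oddness of $w$, but then the lemma would force $A$ to appear at a position in $\{2, \ldots, k-1\}$ strictly between $b$ and $w_k$, contradicting the no-intermediate condition.

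The main obstacle in both forward directions is proving the claimed descriptions of $\shq(v)$, in particular verifying that $\fl_\pm(v_2 v_3 \cdots v_n) \in \cA_{n-1}$ with matching $\fl_\pm(\shq(v))$. Noncrossingness of the new matching is plausible: a potentially problematic symmetric block $\{\pm c\} \in \shq(u)$ with $a < c < b$ in case (a) is ruled out because Lemma~\ref{qinterval-lem}(c)(ii) would place $-c$ at a position in $\{2, \ldots, j-1\}$ with $-b < -c < -a$, contradicting the covering condition, and an analogous argument applies in (b). For the atom condition itself, my plan is to start from an arbitrary sequence of descent removals exhibiting $\fl_\pm(u_2 \cdots u_n)$ (resp.\ $\fl_\pm(w_2 \cdots w_n)$) as an atom via Theorem~\ref{ndes-thm}, and transform it into an analogous sequence witnessing $\fl_\pm(v_2 \cdots v_n)$ as an atom with the transformed matching; this bookkeeping is expected to be the most delicate part of the argument.
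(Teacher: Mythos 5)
Your setup is sound and in fact tracks the paper's own argument closely: the reverse implications do follow from the edge definitions together with Theorem~\ref{qqq-thm}; the forward direction of (a) splits into the same three cases ($v=ut_0$; the index $j$ symmetric; $j$ part of a complementary pair), the first two of which are exactly Lemmas~\ref{pre-st-lem1} and~\ref{pre-st-lem2}; and the modified matchings you propose for $\shq(v)$ in the remaining cases of (a) and (b) are the correct ones. However, there is a genuine gap: the central claim of both forward directions --- that $v$ is an even quasi-atom whose shape is the stated matching $M$, equivalently that $v\in\cQ_M$ --- is never actually established. You acknowledge this yourself (``my plan is to\ldots'', ``this bookkeeping is expected to be the most delicate part of the argument''). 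This is not a step that can be waved at: Theorem~\ref{ndes-thm} only computes the shape of a word \emph{already known} to lie in $\cA_{n-1}$; it does not certify membership in $\cA_{n-1}$. So transforming a descent-removal sequence for $\fl_\pm(u_2\cdots u_n)$ into one for $\fl_\pm(v_2\cdots v_n)$ would not by itself show that the latter word is an atom. Membership has to come from the interval description of Theorem~\ref{interval-thm}, i.e.\ from locating $v$ between $\alphaq_{\min}(M)$ and $\alphaq_{\max}(M)$ with respect to $<_\cQ$.

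The paper closes exactly this gap with Lemma~\ref{st-lem}: one first verifies the claim by hand in the extremal case where $u$ is $<_\cQ$-maximal (so $u=\alphaq_{\max}(P)$ and one checks directly that $v=\alphaq_{\max}(M)$), respectively where $w$ is $<_\cQ$-minimal, and then uses the compatibility of left multiplication by $t_{ab}$ (resp.\ $t_{bc}$) with the covering moves $\vartriangleleft_i$ to transport the conclusion along a chain in $<_\cQ$ to an arbitrary $u$ (resp.\ $w$) in the same component. To complete your write-up you need an analogue of that lemma (or some other mechanism placing $v$ in the interval $\cQ_M$); as it stands, the key implication is asserted rather than proved.
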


\begin{proof}
Theorem~\ref{qqq-thm}
shows that if $u\to v$ is an edge in $\QQQ$ then $v \in \cS(u)$
and that if $v \to w$ is an edge in $\QQQ$ then $v \in \cT(w)$.
It remains to show the converse.

Let $P = \shq(u)$
and $v \in \cS(u)$.
If $v = ut_0$ 
or if 
$v = t_{ab}u$ where $0<a<b=u_1$ and $\{\pm a\} \in P$,
then $u \to v$ is an edge in $\QQQ$ by Lemmas~\ref{pre-st-lem1} and \ref{pre-st-lem2}.
Assume we are not in these cases.
There must be
 numbers $0<a<b<c=u_1$ with $\{a,b\} \in P$ and $v=t_{bc}u$.
Let $j \in [n]$ be such that $v_j = -b$.
By definition $P \in \ONCSM(n,c)$ has no blocks $\{x,y\}$ with $x<c<y$.
Since $\{u_2,u_3,\dots,u_{j-1}\}$ contains no numbers between $-c$ and $-b$ as $u\lessdot v$,
it follows from Lemma~\ref{qinterval-lem}(c) that $P$  has no blocks $\{x,y\}$ with $x<a<b<y$.
Form $M$ from $P$ by replacing the blocks $\{a,b\}$ and $\{-a,-b\}$ by $\{a,c\}$ and $\{-a,-c\}$.
Then $M \in \ENCSM(n,b)$,
and to show that $u\to v$ is an edge in $\QQQ$ it suffices to check that $v \in \cQ_M$.
If $u$ is maximal with respect to $<_\cQ$  then $u = \alphaq_{\max}(P)$
and
evidently $v  = \alphaq_{\max}(M)$. 
In general, Lemma~\ref{st-lem}(a) implies that $v \leq_\cQ \alphaq_{\max}(M)$
so $v \in\cQ_M$ as desired.

Next let  $Q = \shq(w)$ and $v \in \cT(w)$. 
Write $a=w_1$.
Since $a<n$ and $Q \in \ONCSM(n,a)$, we must have 
$v =  wt_{1i} = t_{ab}w$ for some $i \in [2,n]$ where $0<a<b=w_i$,
and there must exist a block $\{b,c\} \in Q$ with $b<c$.
By definition $Q$ has no blocks $\{x,y\}$ with $x<a<y$.
Since $\{w_2,w_3,\dots,w_{i-1}\}$ contains no numbers between $a$ and $b$ as $w\lessdot v$,
it follows from Lemma~\ref{qinterval-lem}(c) that $Q$ has no blocks $\{x,y\}$ with $x<b<c<y$.
Form $M$ from $Q$
by replacing $\{b,c\}$ and $\{-b,-c\}$ by $\{a,c\}$ and $\{-a,-c\}$. 
Then $M \in \ENCSM(n,b)$,
and
to show that $v\to w$ is an edge in $\QQQ$ it suffices to check that $v \in \cQ_M$.
If $w$ is minimal with respect to $<_\cQ$, then 
$w = \alphaq_{\min}(Q)$ and evidently $v = \alphaq_{\min}(M)$.
In general, it follows from Lemma~\ref{st-lem}(b) that $v \geq_\cQ \alphaq_{\min}(M)$
so $v \in \cQ_M$ as desired.
\end{proof}

The previous theorem does not apply when
$ w \in \cQ_n^-$ has $w_1 = n$,
since  then $\cT(w)$ consists of
the single element $(n+1)w_2w_3\cdots w_n n \in C_{n+1}$
but there are no edges $v \to w$ in $\QQQ$.

\begin{corollary}\label{source-cor}
A vertex $w \in \cA_n \sqcup \cQ_n$ is a source in $\QQQ$
if and only if $w_1 = n$,
in which case  $w \in \cQ_n^-$.
Thus the sources in $\QQQ$ are the elements $nv_1v_2\cdots v_{n-1} \in C_n$
where $v_1v_2\cdots v_{n-1} \in \cA_{n-1}$.
\end{corollary}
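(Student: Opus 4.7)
The plan is to stratify the vertices of $\QQQ$ by $\cA_n$, $\cQ_n^+$, and $\cQ_n^-$ and rule out sources in the first two classes. Equations \eqref{u1-eq}--\eqref{u2-eq} and \eqref{u-eq} produce explicit incoming edges for every vertex in $\cA_n$ and $\cQ_n^+$ respectively (the complementary index $j$ demanded in \eqref{u2-eq} is supplied by Corollary~\ref{compl-cor}). Along the way I would note that if $v \in \cA_n \sqcup \cQ_n^+$ has $v_1 > 0$, then the complementary or distinguished index $j$ forces $v_1 < -v_j \leq n$, so $v_1 \leq n-1$ in these cases; in particular, a source with $v_1 = n$ can only live in $\cQ_n^-$.

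Next I would split $w \in \cQ_n^-$ into the cases $w_1 = n$ and $w_1 < n$. The case $w_1 = n$ is already disposed of in the remark immediately following Theorem~\ref{st-thm}: no cover $w \lessdot wt_{1k}$ with $k \in [2,n]$ exists since that would demand $w_k > n$, so $\cT(w) \cap C_n = \varnothing$ and $w$ is a source. For $a = w_1 < n$ the goal is to exhibit an element of $\cT(w) \cap C_n$ and invoke Theorem~\ref{st-thm}(b). The main obstacle will be the following structural claim about $Q = \shq(w) \in \ONCSM(n,a)$: the $\ONCSM$ constraints force every block of $Q$ to be either symmetric $\{\pm c\}$ with $c < a$, ``small'' non-symmetric $\{x,y\}$ with $0 < x < y < a$, or ``large'' non-symmetric $\{b,c\}$ with $a < b < c$, and every value in $(a,n]$ is accounted for by a large block, so a large block must exist. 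I then want to show that the smaller element $b$ of any large block appears as a positive entry $w_k = b$ at some $k \in [2,n]$. Let $i,j \in [2,n]$ have $|w_i| = b$, $|w_j| = c$; Lemma~\ref{qinterval-lem}(a) forces both to lie in complementary pairs (symmetric is excluded since $\{\pm b\}, \{\pm c\} \notin Q$), and the defining inequality $0 < w_{\min} < -w_{\max}$ of such a pair combined with uniqueness of blocks in $Q$ rules out $w_i = -b$ (it would require $c < b$), leaving $w_i = b$, $w_j = -c$, $i < j$. Taking $k$ so that $w_k$ is the smallest such positive entry exceeding $a$, any $e \in (a, w_k)$ appearing earlier in $w_2 \cdots w_{k-1}$ is automatically positive and contradicts minimality; Lemma~\ref{des-lem}(c) then delivers $w \lessdot wt_{1k} \in C_n$, giving the needed incoming edge $wt_{1k} \to w$ via Theorem~\ref{st-thm}(b).

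Finally I would identify the sources with $\{nv : v \in \cA_{n-1}\}$ by unwinding the definition of a quasi-atom. For a source $w \in \cQ_n^-$ with $w_1 = n$, the remaining letters $w_2 \cdots w_n$ lie in $\{\pm c : c \in [n-1]\}$, so $\fl_\pm(w_2 \cdots w_n) = w_2 \cdots w_n$ literally, and this word belongs to $\cA_{n-1}$ by quasi-atom condition (a). Conversely, given any $v \in \cA_{n-1}$ the element $nv \in C_n$ satisfies $\shq(nv) = \sh(v)$, which is supported on $[\pm(n-1)]$, so the quasi-atom conditions on large and on big symmetric blocks as well as the oddness condition are all vacuously satisfied, placing $nv \in \cQ_n^-$. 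Everything beyond the large-block sign analysis is either a direct appeal to Theorems~\ref{qqq-thm} and \ref{st-thm} or a straightforward unwinding of the construction of $\QQQ$.
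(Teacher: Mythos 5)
Your proof is correct and follows essentially the same route as the paper's: vertices of $\cA_n \sqcup \cQ_n^+$ are ruled out by construction, the case $w_1=n$ is handled by showing no incoming edge can exist, and for $w_1<n$ one exhibits an element of $\cT(w)\cap C_n$ and applies Theorem~\ref{st-thm}(b) (the paper takes the minimal \emph{index} $i$ with $w_1<w_i$ where you take the minimal such \emph{value}, which is immaterial). One small slip: the cover criterion you need for $w \lessdot wt_{1k}$ is Lemma~\ref{gendes-lem}(c), not Lemma~\ref{des-lem}(c), which characterizes $wt_{1j}\lessdot w$ instead.
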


\begin{proof}
By definition 
no element in $ \cA_n \sqcup \cQ_n^+$ is a source in $\QQQ$.
Theorem~\ref{ndes-thm} implies that no atom $v\in \cA_n$ has $v_1=n$,
and that if $v \in \cQ_n^+$ then $v_i =-n$ for some $i \in [n]$.
It follows that an odd quasi-atom $w \in \cQ_n^-$
with $w_1=n$ cannot be the target of an edge $v\to w$ in $\QQQ$.

Suppose $w \in \cQ_n^-$ has $w_1 \in [n-1]$.
It remains to show that $w$ is not a source in $\QQQ$.
By Theorem~\ref{st-thm}, it suffices to check that $\cT(w)\neq\varnothing$.
Since $\shq(w)$ has no blocks $\{a,b\}$ with $a<w_1<b$,
the interval $[w_1+1,n]$ must be a non-empty union of blocks in $\shq(w)$.
It follows that $0<w_1<w_i$ for some $i \in [2,n]$,
and if $i$ is minimal with these properties then $wt_{1i} \in \cT(w)$,
so $\cT(w)\neq\varnothing$ as desired.
\end{proof}

\begin{corollary}\label{sink-cor}
A vertex $w \in \cA_n \sqcup \cQ_n$ is a sink in $\QQQ$ if and only if $w \in \cA_n$.
\end{corollary}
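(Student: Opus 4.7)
The plan is to verify the biconditional by analyzing the three types of vertices in $\cA_n \sqcup \cQ_n^+ \sqcup \cQ_n^-$ separately, using the explicit construction of $\QQQ$ together with Theorem~\ref{st-thm} to locate outgoing edges.

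For the forward direction, assume $w \in \cA_n$. Every edge of $\QQQ$ is defined by one of the rules \eqref{u-eq}, \eqref{w-eq}, \eqref{u1-eq}, or \eqref{u2-eq}, each of which produces an edge whose tail lies in $\cQ_n^+ \sqcup \cQ_n^-$ (the first two by inspection, the latter two because the constructed vertex $u$ is an odd quasi-atom by Theorem~\ref{qqq-thm}). No rule creates an edge emanating from a vertex in $\cA_n$, so atoms have outdegree zero and $w$ is a sink.

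For the converse, let $w \in \cQ_n$. If $w \in \cQ_n^+$, then the construction preceding \eqref{w-eq} already furnishes a unique outgoing edge $w \to t_{ab} w$, using the distinguished indices of $w$, whose existence is guaranteed by the definition of an even quasi-atom. Hence $w$ is not a sink in this case.

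The key remaining case is $w \in \cQ_n^-$, where the outgoing edges of $w$ are detected indirectly through Theorem~\ref{st-thm}(a). I would exhibit the explicit outgoing edge $w \to wt_0$ as follows. By clause (a) of the definition of a quasi-atom we have $w_1 > 0$, so Lemma~\ref{gendes-lem}(a) with $i = 1$ (whose second condition is vacuous) gives $w \lessdot ws_{11} = wt_0$. Thus $wt_0 \in \cS_1(w) = \cS(w)$, and Theorem~\ref{st-thm}(a) applied to $u = w$ yields the edge $w \to wt_0$ in $\QQQ$, so $w$ is not a sink. The main obstacle is essentially notational: once Theorem~\ref{st-thm} and the bookkeeping of the quasi-atom structure have been unpacked, the argument reduces to the elementary observation that $wt_0$ is always a valid outgoing move from any $w \in C_n$ with $w_1 > 0$.
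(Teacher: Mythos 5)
Your proof is correct and follows essentially the same route as the paper: atoms and even quasi-atoms are handled directly from the construction of $\QQQ$, and for an odd quasi-atom $w$ one exhibits the outgoing edge $w \to wt_0$ (you justify its existence via $wt_0 \in \cS(w)$ and Theorem~\ref{st-thm}(a), which is the same content as the paper's appeal to Lemma~\ref{pre-st-lem1}). No gaps.
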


\begin{proof}
Since $w \to wt_0$ is always an edge in $\QQQ$ if $w \in \cQ_n^-$, this follows from the definition of $\QQQ$.
\end{proof}

For integers $0<m<n$, write ${\uparrow_m^n} : C_m \to C_n$ for
the transformation
\[
\label{shiftup-eq}
{\uparrow_m^n} (v_1v_2\cdots v_m) \omdef= n\cdots(m+3)(m+2)v_1v_2\cdots v_m(m+1) \in C_n.
\]
Recall that,
by convention, $C_0 $ is the set consisting of just the empty word $\emptyset$.
We define $\uparrow_0^n$ to be the map $\emptyset \mapsto n\cdots321$
and 
view $\ora{\cL_0}$ as the graph with no edges and a single vertex $\emptyset \in \cA_0$.

First define $\QQq{m,n}$ for $0 \leq m < n$ to be the directed graph given
by replacing
each vertex in $\ora{\cL_m}$
by its image under $\uparrow_{m}^{n}$.
One may interpret $\uparrow_{n}^{n+1}$
as the identity map $C_n \to C_n\hookrightarrow C_{n+1}$ and identify 
 $\QQq{n,n+1}$ with $\QQQ$.
%
Next define $\GGG$
to be the graph given by the disjoint union
\[\QQq{0,n + 1} \sqcup \QQq{1, n+1} \sqcup \QQq{2,n+1} \dots \sqcup \QQq{n, n+1}\]
with these additional edges:
for each $m \in [n]$ and $w \in \cA_{m-1}$,
include an edge from the sink 
\be
\label{sink}
 {\uparrow_{m-1}^{n+1}}(w_1w_2\cdots w_{m-1}) = (n+1)\cdots(m+2)(m+1) w_1w_2\cdots w_{m-1 }m
\ee
in $\QQq{m-1,n+1}$ to the source
\be
\label{source}
 {\uparrow_{m}^{n+1}}(m w_1w_2\cdots w_{m-1}) = (n+1)\cdots(m+3)(m+2) mw_1w_2\cdots w_{m-1}(m +1)
\ee
in $\QQq{m,n+1}$. 
Figure~\ref{fig3} shows $\GGG$ for $n=4$.

A vertex in $\QQq{m,n+1}$ is
\emph{odd}
if
it is the image under $\uparrow_{m}^{n+1}$ of an odd quasi-atom in $\ora{\cL_m}$.
All other vertices in $\QQq{m,n+1}$ or $\GGG$ are \emph{even};
in particular, the unique vertex ${\uparrow_{0}^{n+1}}(\emptyset)$ in $\QQq{0,n+1}$ is even.
Since every source in $\ora{\cL_m}$ is an odd quasi-atom and every sink is an atom,
the resulting division into even and odd vertices affords a bipartition of $\GGG$.

\begin{figure}[h]
\begin{center}
\begin{tikzpicture}[xscale=0.9,yscale=1.5]
\node (0) at (7.25,9) {$5\hs4\hs3\hs2\hs1$};
\node (1b) at (7.25,8) {$5\hs4\hs3\hs1\hs2$};
\node (1a) at (7.25,7) {$5\hs4\hs3\hs\bar1\hs2$};
\draw [->] (1b) -- (1a);
\node (2a) at (7.25,6) {$5\hs4\hs 2\hs \bar1\hs3$};
\node (2b1) at (2.5,5) {$5\hs4\hs 1\hs \bar2\hs3$};
\node (2b2) at (12,5) {$5\hs4\hs \bar2\hs \bar1\hs3$};
\draw [->] (2a) -- (2b1);
\draw [->] (2a) -- (2b2);
\node (3a2) at (2.5,4) {$5\hs3\hs 1\hs \bar2\hs4$};
\node (3b3) at (0,3) {$5\hs\bar3\hs 1\hs \bar2\hs4$};
\node (3b4) at (5, 3){$5\hs2\hs 1\hs \bar3\hs4$};
\node (3c) at (5,2) {$5\hs1\hs 2\hs \bar3\hs4$};
\node (3d) at (5,1) {$5\hs\bar1\hs 2\hs \bar3\hs4$};
\node (3a1) at (12,4) {$5\hs3\hs \bar2\hs \bar1\hs4$};
\node (3b1) at (10,3) {$5\hs\bar3\hs \bar2\hs \bar1\hs4$};
\node (3b2) at (14,3) {$5\hs2\hs \bar3\hs \bar1\hs4$};
\draw [->] (3a1) -- (3b1);
\draw [->] (3a1) -- (3b2);
\draw [->] (3a2) -- (3b3);
\draw [->] (3a2) -- (3b4);
\draw [->] (3b4) -- (3c);
\draw [->] (3c) -- (3d);
\node (4a2) at (0,0) {$4\hs \bar3\hs 1\hs \bar2$};
\node (4b3) at (-1,-1) {$\bar4\hs \bar3\hs 1\hs \bar2$};
\node (4b4) at (1,-1) {$3\hs \bar4\hs 1\hs \bar2$};
\node (4a4) at (5,0) {$4\hs \bar1\hs 2\hs \bar3$};
\node (4b7) at (3,-1) {$\bar4\hs \bar1\hs 2\hs \bar3$};
\node (4b8) at (7,-1) {$1\hs \bar4\hs 2\hs \bar3$};
\node (4b9) at (5,-1) {$3\hs \bar1\hs 2\hs \bar4$};
\node (4c1) at (5,-2) {$2\hs \bar1\hs 3\hs \bar4$};
\node (4d1) at (4,-3) {$\bar2\hs \bar1\hs 3\hs \bar4$};
\node (4d2) at (6,-3) {$1\hs \bar2\hs 3\hs \bar4$};
\node (4a1) at (10,0) {$4\hs \bar3\hs \bar2\hs \bar1$};
\node (4b1) at (9,-1) {$\bar4\hs \bar3\hs \bar2\hs \bar1$};
\node (4b2) at (11,-1) {$3\hs \bar4\hs \bar2\hs \bar1$};
\node (4a3) at (14,0) {$4\hs 2\hs \bar3\hs \bar1$};
\node (4b5) at (15,-1) {$\bar4\hs 2\hs \bar3\hs \bar1$};
\node (4b6) at (13,-1) {$3\hs 2\hs \bar4\hs \bar1$};
\node (4c2) at (13,-2) {$2\hs 3\hs \bar4\hs \bar1$};
\node (4d3) at (12,-3) {$\bar2\hs 3\hs \bar4\hs \bar1$};
\node (4d4) at (14,-3) {$1\hs 3\hs \bar4\hs \bar2$};
\draw [->] (4a1) -- (4b1);
\draw [->] (4a1) -- (4b2);
\draw [->] (4a2) -- (4b3);
\draw [->] (4a2) -- (4b4);
\draw [->] (4a3) -- (4b5);
\draw [->] (4a3) -- (4b6);
\draw [->] (4a4) -- (4b7);
\draw [->] (4a4) -- (4b8);
\draw [->] (4a4) -- (4b9);
\draw [->] (4b6) -- (4c2);
\draw [->] (4c2) -- (4d3);
\draw [->] (4c2) -- (4d4);
\draw [->] (4b9) -- (4c1);
\draw [->] (4c1) -- (4d1);
\draw [->] (4c1) -- (4d2);
\draw [->,dashed] (0) -- (1b);
\draw [->,dashed] (1a) -- (2a);
\draw [->,dashed] (2b2) -- (3a1);
\draw [->,dashed] (2b1) -- (3a2);
\draw [->,dashed] (3b1) -- (4a1);
\draw [->,dashed] (3b2) -- (4a3);
\draw [->,dashed] (3b3) -- (4a2);
\draw [->,dashed] (3d) -- (4a4);
\end{tikzpicture}
\end{center}
\caption{The directed graph $\ora{\cG_4}$.
The dashed arrows correspond to edges between vertices of the form
\eqref{sink} and \eqref{source}.
We have omitted the terminal 5 from all vertices in the final layer $\QQq{4,5}\subset \ora{\cG_4}$.
In contrast to what we see in this example, the graph $\GGG$ is not a directed tree for $n\geq 5$.}
\label{fig3}
 \end{figure}
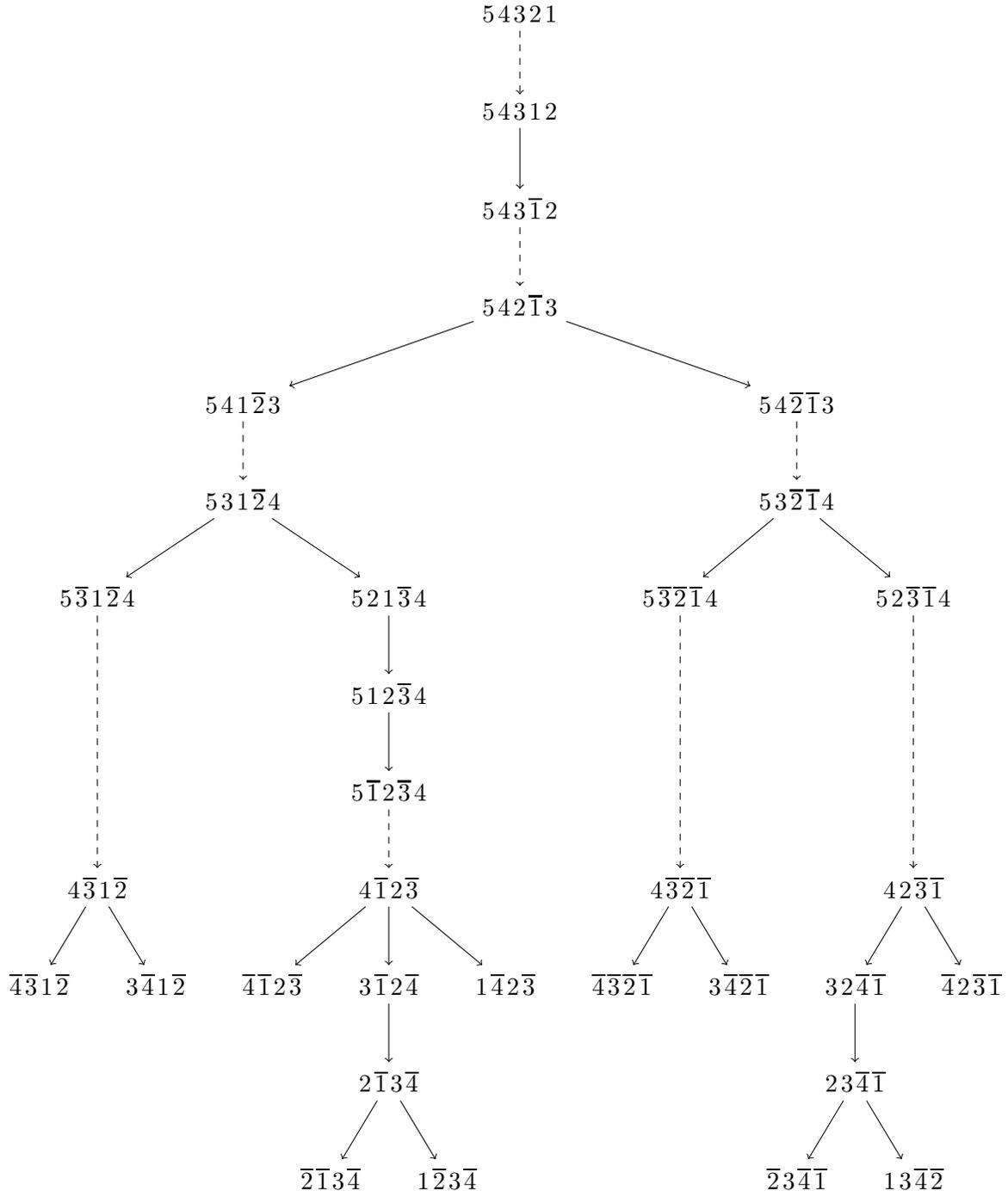

Recall that $w^C_n = \wB$ and $w^A_n = (n+1)n\cdots 321$ and $\delta_n = (n,n-1,\dots,3,2,1)$.

\begin{theorem*}[Theorem~\ref{thm:main-stanley}] \label{thm:inv-stanley-equality}
It holds that
$\iG_{w^C_n} = \G_{w^A_n} = S_{\delta_n}.$
\end{theorem*}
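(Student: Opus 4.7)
The proof splits into two independent pieces.

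For the right-hand equality $\G_{w_n^A} = S_{\delta_n}$, I combine Stanley's formula $\F_{w_n^A} = s_{\delta_n}$ (Theorem~\ref{intro-stan-thm}) with Lam's superfication identity (Theorem~\ref{super-thm}) to obtain $\G_{w_n^A} = \phi(\F_{w_n^A}) = \phi(s_{\delta_n}) = S_{\delta_n}$ directly from the definition of $\phi$.

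For the left-hand equality $\iG_{w_n^C} = \G_{w_n^A}$, the plan is to apply Lemma~\ref{lem:transition-graphs} to the graph $\GGG$ with $f(v) = \G_v$, taking $\cV^+$ to be the set of even vertices and $\cV^-$ the set of odd ones. The unique source of $\GGG$ is $\uparrow_0^{n+1}(\emptyset) = w_n^A$, the only vertex of $\QQq{0,n+1}$, while the sinks are precisely the vertices of $\cA_n$ in the bottom layer $\QQq{n,n+1}$; each source and each sink has degree~$1$. So the conclusion of the lemma reads $\sum_{u \in \cA_n} \G_u = \G_{w_n^A}$, which equals $\iG_{w_n^C}$ by definition.

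Two hypotheses of the lemma must be checked. First, $\sdeg(u) = 0$ for every interior even vertex $u$: shifted elements of $\cQ_m^+$ have one intra-layer incoming and one intra-layer outgoing edge by construction of $\QQQ[m]$, while shifted atoms in $\cA_m$ for $m < n$ have one intra-layer incoming edge plus one outgoing dashed edge. Second, for each odd vertex $W$ in layer $\QQq{m,n+1}$, one must verify $\sum_{u \to W} \G_u = \sum_{W \to v} \G_v$. This will follow from Theorem~\ref{thm:type-C-transitions} applied at position $j = n-m+1$ (the shifted location of the original first entry). Theorem~\ref{st-thm} identifies the intra-layer edges at $W$ with the shifts of $\cS(w)$ and of those elements of $\cT(w)$ lying in $C_m$. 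The required simplifications are that $\cT^-_j(W) = \varnothing$, immediate because the positions $1, 2, \ldots, n-m$ of $W$ hold the large entries $n+1, n, \ldots, m+2$ which all exceed $W_j$; and that the remaining candidate $Wt_{j,n+1}$ lies in $\cT^+_j(W)$ exactly when $W$ is a source of its layer (so $w_1 = m$), in which case it coincides with the sink of $\QQq{m-1,n+1}$ attached to $W$ by a dashed edge.

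The delicate point is showing that $Wt_{j,n+1} \notin \cT^+_j(W)$ when $W$ is not a source, i.e., when $w_1 < m$. This reduces to exhibiting some $i \in [2,m]$ with $w_1 < w_i \leq m$, which would violate the covering condition defining $\cT^+_j(W)$. Quasi-atom axiom (c) prohibits $\{\pm m\} \in \shq(w)$, so the block of $\shq(w)$ containing $m$ must have the form $\{m,b\}$ with $b > 0$; axiom (b) combined with the noncrossing condition on $\shq(w)$ forces $w_1 < b < m$; and Lemma~\ref{qinterval-lem}(a) then places $b$ at some position $i \in [2,m]$ of $w$, supplying the required entry.
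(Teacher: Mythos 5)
Your proposal follows essentially the same route as the paper: the right-hand equality via Stanley's formula and Lam's superfication, and the left-hand equality by telescoping Billey's transition identity over the layered graph $\GGG$ via Lemma~\ref{lem:transition-graphs}, with your treatment of the ``delicate point'' (ruling out $Wt_{j,n+1} \in \cT_j^+(W)$ for non-sources) being the same argument the paper uses inside Theorem~\ref{st-thm}(b) and Corollary~\ref{source-cor}. One small correction: Lemma~\ref{lem:transition-graphs} requires $f(v)=0$ for all $v \in \cV^-$, so you cannot take $f(v)=\G_v$ on the odd vertices (which may have nonzero $\sdeg$); set $f=0$ there, as the paper does --- the balance condition you verify at odd vertices is then exactly the remaining hypothesis, and the conclusion is unaffected since every source and sink of $\GGG$ is even.
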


\begin{proof}
Let $w \in C_n$.
For $j \in [n]$, define $\cT_j^\pm(w)$ and $\cS_j(w)$ 
as in \eqref{st-eq}, and recall that $\cT(w) = \cT^+_1(w)$ and $\cS(w) = \cS_1(w)$.
If $0\leq m \leq n$ and $r=n-m+1$ and $\tilde w ={\uparrow_{m}^{n+1}}(w) \in C_n$, then
evidently 
\[\cT^+_{r}(\tilde w) = {\uparrow_{m}^{n+1}}(\cT(w)),
\qquad \cS_{r} (\tilde w)= {\uparrow_{m}^{n+1}}(\cS(w)),
\qquand
\cT^-_{r}(\tilde w) = \varnothing.
\]
Likewise,
if $u$ and $v$ are the elements \eqref{sink} and  \eqref{source}
then $\cT_{n-m}^+(v) = \{u\}$.
Theorem~\ref{st-thm}
implies
if $v$ is any odd vertex in $\GGG$,
then
$ \sum_{\{u \to v\} \in \GGG} \G_u = \sum_{\{v\to w\} \in \GGG} \G_w$.
Every even vertex in $\GGG$ has indegree at most 1 and outdegree at most 1.
Applying Lemma~\ref{lem:transition-graphs} for the function $f$
with $f(u) = \G_u$ if $u$ is an even vertex and $f(u) = 0$ if $u$ is an odd vertex
therefore gives
$\sum_{u \in \Source(\GGG)} \G_u = \sum_{u \in \Sink(\GGG)} \G_v.$
Since ${\uparrow_{0}^{n+1}}(\emptyset) = (n+1)n\cdots 321 = w^A_n$
is the unique source in $\GGG$ 
and 
since the set of sinks in $\GGG$
is precisely $ {\uparrow_{n}^{n+1}}(\cA_n) = \cA_n$,
we have $\iG_{w^C_n} = \G_{w^A_n}$.
The latter is $S_{\delta_n}$ by Theorems~\ref{intro-stan-thm} and \ref{super-thm}.
\end{proof}

\begin{corollary*}[Theorem~\ref{thm:main-enum}]
It holds that $|\iR(w^C_n)| = |\SYT(\delta_n)| = |\cR(w^A_n)|$.
\end{corollary*}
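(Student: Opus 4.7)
The plan is to derive the corollary from Theorem~\ref{thm:main-stanley} by extracting the coefficient of the square-free monomial $x_1 x_2 \cdots x_N$, where $N = \binom{n+1}{2} = |\delta_n|$. Using the three facts already recorded in the introduction---namely $[x_1 x_2 \cdots] \iG_y = 2^{\hat\ell(y)} |\iR(y)|$, $[x_1 x_2 \cdots] \F_w = |\Red(w)|$, and (implicit in the definition) $[x_1 x_2 \cdots] s_\lambda = |\SYT(\lambda)|$---the identity $|\iR(w^C_n)| = |\SYT(\delta_n)| = |\Red(w^A_n)|$ will follow once I verify the corresponding coefficient extraction for the Schur $S$-function.

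Concretely, I would proceed as follows. First, I would observe that $\iG_{w^C_n}$ is homogeneous of degree $\hat\ell(w^C_n)$ (since each $\G_w$ with $w \in \cA(w^C_n)$ is homogeneous of degree $\ell(w) = \hat\ell(w^C_n)$), while $S_{\delta_n}$ is homogeneous of degree $|\delta_n|$. The equality $\iG_{w^C_n} = S_{\delta_n}$ from Theorem~\ref{thm:main-stanley} therefore forces $\hat\ell(w^C_n) = N$. Second, I would compute $[x_1 x_2 \cdots x_N] S_{\delta_n}$: by definition this counts marked semistandard tableaux of shape $\delta_n$ in which each value $i \in [N]$ appears exactly once, as either $i$ or $i'$---that is, \emph{standard marked tableaux}. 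Since in the ordering $1' < 1 < 2' < 2 < \cdots$ one has $a?  < b?$ whenever $a < b$ regardless of priming, every standard tableau of shape $\delta_n$ gives rise to exactly $2^N$ standard marked tableaux (one for each choice of primed/unprimed mark on each entry), and conditions (ii) and (iii) are automatic since every letter appears only once. Hence
\[
[x_1 x_2 \cdots x_N] S_{\delta_n} = 2^N |\SYT(\delta_n)|.
\]

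Equating the coefficients of $x_1 x_2 \cdots x_N$ on both sides of $\iG_{w^C_n} = S_{\delta_n}$ then yields $2^N |\iR(w^C_n)| = 2^N |\SYT(\delta_n)|$, so $|\iR(w^C_n)| = |\SYT(\delta_n)|$. For the second equality, I would invoke Theorem~\ref{intro-stan-thm} ($\F_{w^A_n} = s_{\delta_n}$) and extract the coefficient of $x_1 x_2 \cdots x_N$ from both sides to obtain $|\Red(w^A_n)| = |\SYT(\delta_n)|$.

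The argument is essentially a coefficient extraction, so there is no serious obstacle once Theorem~\ref{thm:main-stanley} is in hand; the only step requiring any care is the verification that priming commutes with standardness for marked tableaux of straight shape, which is a routine check using the total order $1' < 1 < 2' < 2 < \cdots$.
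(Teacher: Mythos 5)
Your proposal is correct and follows essentially the same route as the paper: extract the coefficient of the square-free monomial from $\iG_{w^C_n} = S_{\delta_n}$, note that $[x_1x_2\cdots]S_{\delta_n}$ counts standard marked tableaux of shape $\delta_n$, which number $2^N|\SYT(\delta_n)|$, and combine with $|\SYT(\delta_n)| = |\Red(w^A_n)|$ from Stanley's theorem. The only difference is that you spell out the routine bijection between standard marked tableaux and pairs (standard tableau, choice of markings), which the paper dismisses as evident.
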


\begin{proof}
Let $N = \binom{n+1}{2} = \deg (S_{\delta_n}) = \ellhat(w^C_n)$.
Then $2^N |\iR(w^C_n)| = [x_1x_2\cdots] \iG_{w^C_n} =  [x_1x_2\cdots] S_{\delta_n} $,
which is the number of marked standard tableaux of shape $\delta_n$.
Since this number is evidently $2^N |\SYT(\delta_n)|$,
and since we have already seen that $|\SYT(\delta_n)| = |\cR(w^A_n)|$, 
the result follows.
\end{proof}

\section{Future directions}







\subsection{Geometry}

There are geometric connections in type A for which we do not know type C analogues. The type A involution Stanley symmetric function $\iF_w$ is a limit of \emph{involution Schubert polynomials}, which are known to represent the cohomology classes of the orbit closures of $\operatorname{O}_n(\CC)$ acting on the type A complete flag variety. One can also define type C involution Schubert polynomials, which represent cohomology classes on the type C isotropic flag variety insofar as they are positive integer combinations of type C Schubert polynomials, but we do not know a more interesting description of these classes.

\subsection{Positivity} \label{positivity-sect}

As mentioned in Section~\ref{prelim-stan-sect}, $\iF_y$ is not only Schur-positive but Schur-$Q$-positive, with integral coefficients up to a predictable scalar \cite{HMP4}. It would be interesting to find a similar expression for  $\iG_y$ as a positive combination of some Schur-$Q$-positive symmetric functions in a nontrivial way. Theorem~\ref{thm:main-stanley} accomplishes this for $\iG_{w_n^C}$, because the Schur $S$-functions are Schur-$Q$-positive (they are in fact \emph{skew Schur $Q$-functions}), but usually $\iG_y$ is not Schur-$S$-positive.

\subsection{Type D analogues}

Let $D_n$ be the subgroup of signed permutations in $C_n$ whose one-line representations have an even number of negative letters.
This is a finite Coxeter group of classical type D relative to the generating set $S = \{t_1',t_1,t_2,\dots,t_{n-1}\}$ where $t_1' \omdef= t_0t_1t_0$.
For $w \in D_n$ and $a \in \Red(w)$, let $\flt{a}$ be the word obtained from $a$ by replacing each $t_1'$ with $t_1$,
and define $\flt{\Red}(w) = \{ \flt{a} : a \in \Red(w)\}$.
 For instance, $\Red(\bar 1 3 \bar 2) = \{(t_1, t_1', t_2), (t_1', t_1, t_2)\}$ while $\flt{\Red}(\bar 1 3 \bar 2) = \{(t_1, t_1, t_2)\}$.

In type D it is the sets $\flt{\Red}(w)$ that have simple tableau enumerations. Let $w_n^D$ be the longest element of $D_n$.
One has $w_n^D = w_n^C = \bar 1\hs \bar 2 \cdots \bar n$ if $n$ is even and $w_n^D = 1 \bar2 \hs \bar 3 \cdots \bar n$ if $n$ is odd.

\begin{theorem}[{Billey and Haiman \cite[Proposition 3.9]{BH}}] \label{d-thm}
If $n\geq 3$ then $|\flt{\Red}(w_n^D)| = |\SYT((n-1)^n)|$, which is also the number of (unmarked) shifted standard tableaux of shape $(2n-2, 2n-4, \ldots, 2)$.
\end{theorem}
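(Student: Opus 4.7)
The plan is to follow the approach of Billey and Haiman by introducing a type D analogue of the Stanley symmetric functions $\F_w$ and $\G_w$. For each $w \in D_n$, I would define a symmetric function $E_w$ as a sum indexed by $\flt{\Red}(w)$, weighted analogously to the definition of $\G_w$ but with a normalization adjusted to account for the fact that only the flattened words are being counted. The construction should have the direct consequence that $[x_1 x_2 \cdots] E_w$ equals $|\flt{\Red}(w)|$ (up to a predictable power of $2$ reflecting the passage from Schur $Q$-functions to Schur $P$-functions $P_\mu$, which satisfy $[x_1 x_2 \cdots] P_\mu = |\text{unmarked shifted SYT of shape } \mu|$). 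Then, paralleling the type A and C theory, one shows $E_w$ is a nonnegative integer combination of Schur $P$-functions.

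The heart of the argument is the identity $E_{w_n^D} = P_{(2n-2,\,2n-4,\,\ldots,\,2)}$. The natural approach is to establish a type D transition formula analogous to Theorem~\ref{thm:type-C-transitions}, then iterate it starting from $w_n^D$. Each application rewrites a sum of $E_v$'s with one ``partially unsigned'' $v$ as a sum of $E_v$'s with a more negative first entry, until reaching base cases where $v$ is strictly increasing on the unsigned tail (after negation). The base $E_v$ can be identified directly as a Schur $P$-function, analogous to Theorem~\ref{thm:transition-tree}(a). A telescoping argument then collapses the iteration to a single term $P_{(2n-2,2n-4,\ldots,2)}$. Extracting the coefficient of $x_1 x_2 \cdots$ gives
\[
    |\flt{\Red}(w_n^D)| \;=\; [x_1 x_2 \cdots]\, P_{(2n-2,\,2n-4,\,\ldots,\,2)},
\]
which is the number of unmarked shifted standard tableaux of shape $(2n-2, 2n-4, \ldots, 2)$.

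To finish the statement that this common count also equals $|\SYT((n-1)^n)|$, I would either invoke a direct bijection (a shifted RSK-type correspondence relating shifted SYT of ``doubled even'' shape to SYT of the corresponding rectangle) or simply verify the numerical identity via the hook-length and shifted hook-length formulas; the multisets of hook lengths in the two shapes coincide.

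The main obstacle is establishing the Schur $P$-expansion $E_{w_n^D} = P_{(2n-2, 2n-4, \ldots, 2)}$. The generator $t_1' = t_0 t_1 t_0$ does not commute in a simple way with the other generators, so a type D transition formula is significantly more delicate than its type C counterpart: one must carefully track how reflections act on the first two positions of the one-line representation, and verify that the flattening operation $\flt{(\cdot)}$ is compatible with these operations at the level of the symmetric function $E_w$. Additionally, since $w_n^D$ has a different one-line representation depending on the parity of $n$, the recursion must be initialized differently in the two cases, though in both it should terminate at a single Schur $P$-function as above.
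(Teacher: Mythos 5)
The paper does not actually prove this statement: it is quoted from Billey and Haiman \cite[Proposition 3.9]{BH} in the ``Future directions'' section, so there is no in-paper argument to compare against. Your outline does follow the standard route to the result (define a type D Stanley symmetric function $E_w$ as a generating function over $\flt{\Red}(w)$, prove Schur-$P$-positivity, identify $E_{w_n^D}$ with the single Schur $P$-function $P_{(2n-2,2n-4,\ldots,2)}$, then extract the square-free coefficient), and that route can be made to work. But as written there is a genuine gap: essentially all of the content is concentrated in the identity $E_{w_n^D}=P_{(2n-2,2n-4,\ldots,2)}$, and you only gesture at it. Asserting that a type D transition recursion ``telescopes to a single term'' is precisely the miraculous part --- compare the present paper, which needs all of Sections 3--5 of careful combinatorics on atoms and quasi-atoms to make the analogous type C collapse happen for $\iG_{w_n^C}$ --- and you give no reason why the type D transition tree should terminate in one leaf rather than many. (Historically this identity was not obtained from transition formulas at all: Billey's transition equations postdate \cite{BH}.) Until that identity is established, nothing has been proved.

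Separately, the fallback you offer for the equality $|\SYT((n-1)^n)| = g^{(2n-2,\ldots,2)}$ (where $g^\mu$ denotes the number of unmarked shifted standard tableaux) is wrong as stated: the multisets of hook lengths do \emph{not} coincide. Already for $n=3$ the hooks of the $3\times 2$ rectangle are $\{4,3,3,2,2,1\}$ while the shifted hooks of $(4,2)$ are $\{6,4,3,2,1,1\}$; only the products agree (both equal $144$, giving $720/144=5$ tableaux in each case). The numerical identity is true and can be verified from the two product formulas, but that computation (or a bijection) must actually be carried out; the one-line justification you give is false. Relatedly, $[x_1x_2\cdots]P_\mu$ equals $2^{|\mu|-\ell(\mu)}$ times the number of unmarked shifted standard tableaux of shape $\mu$, not that number itself, so the ``predictable power of $2$'' really does need to be tracked through your definition of $E_w$ rather than asserted.
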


Let $(W,S)$ be a Coxeter system with a group automorphism $\theta:W\to W$ such that $\theta(S) = S$ and $\theta=\theta^{-1}$.
The  set of \emph{twisted involutions} with respect to $\theta$ is $\I_\theta(W) = \{w \in W : \theta(w) = w^{-1}\}$. The set of \emph{(twisted) atoms} $\cA_\theta(y)$ of $y \in \I_\theta(W)$ consists of the minimal-length elements $w \in W$ with $\theta(w)^{-1} \circ w = y$, and the set of \emph{(twisted) involution words} is $\iR_\theta(y) = \bigsqcup_{w \in \cA_\theta(y)} \Red(w)$.

Assume $W$ is finite with longest element $w_0$.
If $W$ is $A_n$, $C_n$, or $D_{2n+1}$ for $n > 1$, then the only possibilities for $\theta$ are the identity map and 
$w \mapsto  w_0 w w_0$, and 
it holds that $|\iR_\theta(w_0)| = |\iR(w_0)|$ and (in type D) $|\hs\flt{\iR}_\theta(w_0)| = |\hs\flt{\iR}(w_0)|$ by \cite[Corollary 3.9]{HMP2}. Define $*$ as the automorphism of $D_n$ which interchanges $t_1$ and $t_1'$ and fixes $t_i$ for $i \in [2,n-1]$.
When $n$ is odd, $*$ is the inner automorphism $w \mapsto w_0 ww_0$.
There appear to be involution word analogues of Theorem~\ref{d-thm}:

\begin{conjecture} If $n \geq 3$ then $\left|\hs\flt{\iR}(w_n^D)\right| =|\SYT(\lambda)|$ and $\left|\hs\flt{\iR}_*(w_n^D)\right| = |\SYT(\mu)|$ for 
the partitions
$\lambda = (n-1, n-2, \ldots, \lfloor \frac{n}{2} \rfloor, \lfloor \frac{n}{2} \rfloor, \ldots, 2, 1)$
and 
$\mu = (n-1, n-2, \ldots, \lceil \frac{n}{2} \rceil-1, \lceil \frac{n}{2} \rceil-1, \ldots, 2, 1).$
\end{conjecture}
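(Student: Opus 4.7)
The plan is to adapt the strategy behind Theorem~\ref{thm:main-stanley} from type C to type D, working with a natural analogue of the involution Stanley symmetric function. Billey and Haiman \cite{BH} attach to each $w \in D_n$ a type D Stanley symmetric function $\E_w$ with the property $[x_1 x_2 \cdots] \E_w = 2^{m(w)}|\flt{\Red}(w)|$ for a predictable $m(w) \in \NN$. For each admissible $\theta \in \{\id, *\}$ and $y \in \I_\theta(D_n)$, one should define $\iE^\theta_y \omdef = \sum_{w \in \cA_\theta(y)} \E_w$, so that $[x_1 x_2 \cdots] \iE^\theta_y$ records $|\flt{\iR}_\theta(y)|$ up to a known power of 2. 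The conjecture then reduces, via the hook-length formula, to establishing the Schur-function identities $\iE^{\id}_{w_n^D} = 2^{N_\lambda} s_\lambda$ and $\iE^{*}_{w_n^D} = 2^{N_\mu} s_\mu$ for the appropriate exponents $N_\lambda, N_\mu$.

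The second step, and the combinatorial heart of the argument, is to classify the twisted atoms $\cA_\theta(w_n^D)$ by a type D analogue of Theorem~\ref{interval-thm}. The symmetric noncrossing matchings $\NCSM(n)$ used in type C should be replaced by a subfamily indexed by the parity constraint defining $D_n$ (and, for $\theta = *$, by an additional marking on the "exchangeable" block through $\pm 1$ that records the action of the outer automorphism). One then needs to verify that the intervals $\cA_M$ under $<_\cA$ partition $\cA_\theta(w_n^D)$ and that the shape of each atom can be read off from its one-line representation via a descent-removal procedure as in Theorem~\ref{ndes-thm}.

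The third step is to construct a type D analogue of the transition graph $\GGG$ of Section~\ref{graph-sect}, using the type D transition formula of Billey \cite{BilleyTransitions} in place of Theorem~\ref{thm:type-C-transitions}. The quasi-atom machinery of Section~4, together with the even/odd bipartition and the layering $\QQq{m,n+1}$, must be reworked to accommodate the generator set $\{t_1', t_1, t_2, \ldots, t_{n-1}\}$ and the fact that $t_1$ and $t_1'$ commute but are swapped by $*$. Once the graph is in place, Lemma~\ref{lem:transition-graphs} should telescope $\iE^\theta_{w_n^D}$ to $\E_{u^\theta_n}$ for an explicit source vertex $u^\theta_n$ depending on $\theta$ and on the parity of $n$; the target Schur identity then follows from Theorem~\ref{super-thm} or its Billey--Haiman type D variant, applied to a type A element whose Stanley symmetric function is $s_\lambda$ or $s_\mu$.

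The main obstacle is the last identification: for $n$ even the shapes $\lambda$ and $\mu$ differ, so the two cases $\theta = \id$ and $\theta = *$ must launch from genuinely different source atoms $u^\theta_n$, while for $n$ odd we have $\lambda = \mu$ (consistent with $*$ being inner) and the two graphs must collapse to yield the same answer. Identifying the correct source atoms $u^\theta_n$, and verifying that the double-staircase shapes $\lambda, \mu$ arise as the Stanley symmetric functions $\E_{u^\theta_n}$, is the delicate step—especially because these shapes are not themselves staircases, suggesting that some intermediate step will require expressing $\E_{u^\theta_n}$ as a skew Schur function or a product of Schur functions and then recombining it into a single Schur function via a Jacobi--Trudi or Littlewood--Richardson identity. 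The parity-sensitive behavior of $w_n^D$ (with $w_n^D = \bar 1 \bar 2 \cdots \bar n$ when $n$ is even and $w_n^D = 1 \bar 2 \cdots \bar n$ when $n$ is odd) makes it likely that the arguments for the two parities must be handled separately throughout.
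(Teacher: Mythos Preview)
The statement you are addressing is a \emph{conjecture}, not a theorem: the paper does not prove it. The only evidence the paper offers is the sentence immediately following the statement, reporting computer verification of the predicted numbers for $n=3,4,5,6$. So there is no ``paper's own proof'' to compare against.

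What you have written is accordingly not a proof but a research programme, and you are candid about this: the third paragraph says the quasi-atom machinery ``must be reworked,'' and the fourth paragraph flags the identification of the source atoms $u^\theta_n$ and the Schur-function identities $\E_{u^\theta_n}=s_\lambda$, $\E_{u^\theta_n}=s_\mu$ as ``the delicate step'' still to be carried out. None of these steps is actually executed. In particular, there is currently no type D analogue of Theorem~\ref{interval-thm} in the literature classifying $\cA_\theta(w_n^D)$, no verified type D transition graph playing the role of $\GGG$, and no candidate elements $u^\theta_n$ whose type D Stanley symmetric functions are known to equal a single Schur function of the required doubled-staircase shape. Each of these is a genuine open problem, not a routine adaptation; the shapes $\lambda$ and $\mu$ are not staircases, so even the final symmetric-function identity (the analogue of $\G_{w_n^A}=S_{\delta_n}$) has no obvious source. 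Your outline is a plausible line of attack, but as written it is a sketch of what a proof might look like rather than a proof.
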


For $n=3,4,5,6$, we have checked by computer that $\left|\hs\flt{\iR}(w_n^D)\right|=3$, $70$, $5775$, $10720710$
and $\left|\hs\flt{\iR}_*(w_n^D)\right| = 3$, $35$, $5775$, $3573570$ as predicted by this conjecture.

\end{document}